\documentclass[11pt]{article}
\usepackage[numbers,sort&compress]{natbib}
\usepackage{appendix,diagbox,booktabs,float}
\usepackage{enumerate,makecell}
\usepackage{amscd}
\usepackage{amsmath}
\usepackage{latexsym}
\usepackage{amsfonts}
\usepackage{amssymb}
\usepackage{amsthm}
\usepackage{bm}
\usepackage{verbatim}
\usepackage{mathrsfs}
\usepackage{enumerate}
\usepackage{hyperref}

\theoremstyle{plain}
\theoremstyle{definition}\newtheorem{theorem}{Theorem}[section]
\theoremstyle{plain}\newtheorem{lemma}[theorem]{Lemma}
\theoremstyle{plain}\newtheorem{coro}[theorem]{Corollary}
\theoremstyle{plain}
\theoremstyle{remark}
\usepackage{xcolor}

\newcommand{\Div}{\mathrm{div}\,}
\newcommand{\B}{\Big}

\newcommand{\be}{\begin{equation}}
\newcommand{\ee}{\end{equation}}
 \newcommand{\ba}{\begin{aligned}}
 \newcommand{\ea}{\end{aligned}}

  \newcommand{\f}{\frac}
    
  \newcommand{\ben}{\begin{enumerate}}
   \newcommand{\een}{\end{enumerate}}

\newcommand{\ti}{\nabla}

\newcommand{\Rmnum}[1]{\expandafter\@slowromancap\romannumeral #1@}

\allowdisplaybreaks

\numberwithin{equation}{section}
\begin{document}
\title{
On space-time  derivative estimates for the magnetohydrodynamic   equations
  }
\author{Yanqing Wang\footnote{School of Mathematics and   Information Science, Zhengzhou University of Light Industry, Zhengzhou, Henan  450002,  P. R. China Email: wangyanqing20056@gmail.com},~
    Wei Wei\footnote{Corresponding author. School of Mathematics and Center for Nonlinear Studies,  Northwest University, Xi'an, Shaanxi 710127,  P. R. China.  Email: ww5998198@126.com },~  Gang Wu \footnote{School of Mathematical Sciences, University of Chinese Academy of Sciences, Beijing 100049, P. R.
China Email: wugang2011@ucas.ac.cn} ~and 
   Yulin Ye\footnote{School of Mathematics and Statistics,
		Henan University,
		Kaifeng, 475004,
		P. R. China. Email: ylye@vip.henu.edu.cn}      }
\date{}
\maketitle
\begin{abstract}
 In this paper,  we present the space-time derivative estimates of solutions to the
 MHD equations.  It is a generalization of a priori bounds  for the classical
Navier-Stokes system  by Duff in
\cite[Acta Math. 164, 1990]{[Duff]}  and gives an affirmative answer to a question proposed by Zheligovsky
in \cite[Mathematics. 9, 2021]{[Zheligovsky]}. In addition, we show that  $  u, B   \in L^{\f{q}{q-3}}(0,T;L^{q} (\mathbb{R}^{3}))$ with $  6\leq q\leq\infty $  and $ D ^{k}u,  D ^{k}B   \in L^{\f{q}{ q(k+1)-3}}(0,T;L^{q} (\mathbb{R}^{3})) $ for $k\geq1, 2\leq q\leq\infty$  in this system.

  \end{abstract}
\noindent {\bf MSC(2020):}\quad 76F02, 76W05, 35L65, 35L67, 35Q35
\\\noindent
{\bf Keywords:}  MHD equations;   regularity; space-time derivatives estimate 
\section{Introduction}
\label{intro}
\setcounter{section}{1}\setcounter{equation}{0}
The incompressible   magnetohydrodynamic (MHD for short) equations describe the evolution of electrically conducting fluids and arise in a wide range of applications in plasmas such as liquid metals and
electrolytes (see \cite{[Davidson]}).
MHD system
consisting of the Navier-Stokes equations and Maxwell's equations   is written as
\be\left\{\ba\label{MHD}
&u_{t} -\Delta u+   u\cdot\nabla u -B\cdot\ti B +\nabla \Pi= 0, \ \ \ &\text{in}\ \mathbb{R}^{3}\times (0,T),\\
&B_{t} -\Delta B +u\cdot\nabla B-B\cdot\nabla u = 0, \ \ \ \ \ \ \ \ \ \ \ \ \ \  &\text{in}\ \mathbb{R}^{3}\times (0,T),\\
&\Div u=\Div B=0,\ \ \ \ \ \ \ \ \ \ \ \ \ \ \ \ \ \ \ \ \ \ &\text{in}\ \mathbb{R}^{3}\times (0,T),
\ea\right.\ee
with the initial conditions
\be\label{inital data}
u(x,0)=u_0(x),\ \ \ B(x,0)=B_0(x).
\ee
Here $u$ represents the velocity field,   $B
$ stands for the magnetic field and $\Pi=\pi +\f12 |b|^2$ denotes the magnetic pressure with $\pi$ being the fluid pressure, respectively.

The basic energy balance law in system \eqref{MHD} reads
\be \ba\label{EI}
& \f12\|u (T)\|^{2}_{L^{2}(\mathbb{R}^3)}
+\f12\|B (T)\|^{2}_{L^{2}(\mathbb{R}^3)}
+\int_0^T\|\nabla u (s)\|^{2}_{L^{2}(\mathbb{R}^3)}ds+\int_0^T\|\nabla B (s)\|^{2}_{L^{2}(\mathbb{R}^3)}ds\\=& \f12\|u_{0} \|^{2}_{L^{2}(\mathbb{R}^3)}
+\f12\|B_{0}  \|^{2}_{L^{2}(\mathbb{R}^3)}.
\ea\ee
 The Leray-Hopf type
weak solutions of the MHD equations \eqref{MHD} were constructed by Sermange and   Temam in \cite{[ST]}. It is well known that weak solutions of the  MHD equations \eqref{MHD} satisfy
\be
u, B\in L^{\infty}(0,T;L^{2}(\mathbb{R}^{3}))\cap L^{2}(0,T;\dot{H}^{1}(\mathbb{R}^{3})).
\ee
Just as the tri-dimensional Navier-Stokes equations, the full regularity of weak solutions to the MHD equations  \eqref{MHD} remains open. A natural question of investigation is to exploit  a priori
 estimates   of   solutions to the MHD equations \eqref{MHD}. In this direction,
Kim \cite{[Kim]} derived  a priori bounds
of weak solutions to the MHD equations \eqref{MHD} on the  periodic domain and proved that
\be\ba
&\partial^{s}_{x}  {u}, \partial^{s}_{x}B \in L^{\f{2}{ 2s-1}}  (0,T;L^{2}(\mathbb{ T}^{3})); \\
&{u}, B \in L^{1}(0,T;L^{\infty}(\mathbb{T}^{3})).
 \ea\ee
 For the  MHD equations \eqref{MHD} on the whole space, it was shown by Wu in \cite{[Wu]} that
\be
  {u}, ~B \in L^{1}(0,T;L^{\infty}(\mathbb{R}^{3})).
  \ee
Recently, Zheligovsky \cite{[Zheligovsky]} addressed
a priori bounds for arbitrary-order space derivatives of the first-order
time derivative of
the MHD solutions to \eqref{MHD} with periodic boundary
conditions. There,
it was proved that
  \be  \partial_{t}\partial^{s}_{x}{u}, ~\partial_{t}\partial^{s}_{x} B \in L^{\f{2}{ 2s+3}}  (0,T;L^{2}(\mathbb{ T}^{3})).  \ee
In addition,
Zheligovsky   made a   statement that might be interpreted as Zheligovsky's
question (see \cite{[Zheligovsky]}):
\begin{quote}
According to the present paradigm, the action of viscosity and diffusivity hampers the
development of small-scale structures generated by the nonlinearity. Thus, bounding the
diffusive and nonlinear terms jointly may be expected to result in more accurate bounds
for a larger ``number of derivatives" (i.e., for a higher-index Sobolev space norm). We have
not achieved this when estimating the time derivatives of the MHD solutions: our bounds
for the nonlinear advective terms are for the same index norms, as for the dissipative
terms. Indirectly, this confirms that cancellation may be possible with the sum residing in a
higher-index Sobolev space; our estimations are then too conservative.\end{quote}

 The first objective of our paper
is to give a positive answer to Zheligovsky's issue.
Before formulating our results,  we recall derivative estimates of the Navier-Stokes equations as the sub-system of the MHD equations.
Foias, Guillope and Temam \cite{[FGT]} showed that weak solutions
of the 3D Navier-Stokes equations satisfying
   $\partial^{s}_{x}  {u} \in L^{\f{2}{ 2s-1}}  (0,T;L^{2}(\mathbb{T}^{3})).$   The regularity $  {u} \in L^{1}(0,T; L^{\infty}(\mathbb{T}^{3})) $ due to Tartar
was also stated in \cite{[FGT]}.   M\'alek,    Padula  and   R\r{ u}\v{z}i\v{c}ka proved that a weak solution $u$  is in energy class $\nabla  {u} \in L^{\f{q}{2q-3}}(0,T;L^{q}(\mathbb{T}^{3})),~2 \leq q<\infty, $ in \cite{[MPR]}.
Duff  \cite{[Duff]}    studied the space-time derivative estimates and  demonstrated  that   $ \partial_{ t}^{r}\partial_{x}^{s}  {u}  \in L^{\f{2}{4r+2s-1}}(0,T;L^{2}(\mathbb{R}^{3}))$.  Recently, it was shown in our another paper \cite{[WW]} that  $  u   \in L^{\f{q}{q-3}}(0,T;L^{q} (\mathbb{R}^{3}))$ with $  6\leq q\leq\infty $  and $ D ^{k}u   \in L^{\f{q}{ q(k+1)-3}}(0,T;L^{q} (\mathbb{R}^{3})) $ for $k\geq1, 2\leq q\leq\infty $  in the Navier-Stokes equations.

In addition,
 Constantin  \cite{[Constantin]}
 considered second derivative estimates of suitable weak solutions to the
3D Navier-Stokes equations and obtained that $ \nabla^{2}   {u} \in   L^{q}(0,T;L^{q}(\mathbb{T}^{3}))  $ for $q<\f43$. Constantin's derivative estimates  were refined to Lorentz spaces   $ \nabla^{2}   {u} \in   L^{\f43,\infty}(0,T;L^{\f43,\infty}(\Omega))$ by Lions in \cite{[Lions]}.
In a series of works \cite{[Vasseur],[CV],[VY]}, Vasseur and his coauthors generalized Constantin's and Lions's derivative estimates. In particular,
Constantin type derivative estimates on the whole spaces were established by
Vasseur in \cite{[Vasseur]}, where the  blow-up method and the galilean invariance of the transport part of the equation were exploited.
  Choi and Vasseur \cite{[CV]} extended Lions's  derivative estimates to
 the
fractional  case    $\nabla^{\beta} u \in L^{\f{4}{\beta+1},\infty}(0,T;L^{\f{4}{\beta+1},\infty}(\mathbb{R}^{3})),  ~\beta>1.$ Furthermore, Constantin's and Lions's  bound was improved to $ \nabla^{2}   {u} \in   L^{\f43,\ell}(0,T;L^{\f43,\ell}(\mathbb{R}^{3}))  $ for $\ell>\f43$ by
 Vassuer and Yang in \cite{[VY]}. There, the vorticity equation and   De Giorgi iteration were adapted. Yang \cite{[Yang]} recently generalized Constantin's result  to
the integral norms with different exponents in space and
time directions and demonstrated that $ \nabla^{n}   {u} \in   L^{p,\infty}(0,T;L^{q,\infty}(\mathbb{T}^{3})), 1/p+3/q=n+1, n\geq1, 0<p\leq q\leq\infty$. We refer the interested reader to  \cite{[Chae],[BG],[KO]} for further recent
derivative estimates  of  solutions for the Navier-Stokes equations and related models.

Now, we state our first result involving the time derivative estimates of solutions to the
MHD equations \eqref{MHD}.
\begin{theorem}\label{the1.2}Let the pair  $(u, B) $ be a smooth solution of    the   MHD system \eqref{MHD}. Then it is valid that for any nonnegative integer $k$,
 \be\label{timederivative}  u _{t}^{(k )}, B _{t}^{(k )}  \in L^{\f{2 }{4k-1}}(0,T;L^{2} (\mathbb{R}^{3})).\ee
\end{theorem}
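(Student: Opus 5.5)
We follow Duff's strategy for Navier--Stokes, which relies on a time-weighted energy hierarchy. The reformulation to have in mind is the Elsässer form. Set $w^{\pm}=u\pm B$. Then \eqref{MHD} becomes
\[
\partial_t w^{\pm}-\Delta w^{\pm}+w^{\mp}\cdot\nabla w^{\pm}+\nabla\Pi=0,\qquad \Div w^{\pm}=0 .
\]
This is a symmetric system with the same quadratic structure as Navier--Stokes. It therefore suffices to prove $\partial_t^k w^{\pm}\in L^{\frac{2}{4k-1}}(0,T;L^2)$. The case $k=0$ is immediate from \eqref{EI}, since $L^\infty(0,T)\subset L^{-2}$ trivially on a finite interval (equivalently, we interpret the exponent as giving the energy bound). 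Throughout, constants depend only on the initial energy $E_0=\frac12(\|u_0\|_2^2+\|B_0\|_2^2)$ and on $T$.

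\textbf{Step 1: a weighted space--time hierarchy.} We introduce the scale-invariant quantities
\[
F_{r,s}(t)=\|\partial_t^r\nabla^s w\|_{L^2}^2 ,\qquad w=(w^+,w^-),
\]
where one time derivative counts as two space derivatives, so the ``order'' of $F_{r,s}$ is $n=2r+s$. Following Duff and Foias--Guillopé--Temam, we aim for the local-in-time inequality
\[
\frac{d}{dt}F_{r,s}+F_{r,s+1}\le C\sum(\text{products of lower-order }F\text{'s}) .
\]
This inequality comes from applying $\partial_t^r\nabla^s$ to the Elsässer system and testing with $\partial_t^r\nabla^s w^{\pm}$. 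The pressure term drops out by incompressibility. The nonlinear terms are expanded by Leibniz's rule in both $t$ and $x$. Each term $\partial_t^{a}\nabla^{b}w^{\mp}\cdot\nabla\partial_t^{r-a}\nabla^{s-b}w^{\pm}$ is bounded by Hölder, Gagliardo--Nirenberg and Agmon inequalities in terms of the $F_{a,b}$. Time derivatives of order $a\ge1$ are never interpolated in time. They are instead converted through the equation, $\partial_t w=\Delta w-\mathbb P(w^{\mp}\cdot\nabla w^{\pm})$, so that $\partial_t^r w$ is controlled by $\nabla^{2r}w$ plus nonlinear lower-order terms.

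\textbf{Step 2: the Duff/FGT averaging argument.} From Step 1, by induction on the order $n$, we derive the estimate
\[
\int_0^T F_{r,s}^{\alpha_n}\,dt\le C(E_0,T),\qquad \alpha_n=\frac{1}{2n-1}\ \ (n=2r+s\ge1).
\]
The mechanism is the standard one. Consider the function $(1+F_{n})^{-\beta}$ with a suitable $\beta$. Its time derivative, combined with the differential inequality, gives a bound for $\int F_{n+1}/(1+F_n)^{\beta+1}$. Hölder's inequality then trades this weighted integral, together with the inductive bound $\int F_n^{\alpha_n}$, for $\int F_{n+1}^{\alpha_{n+1}}$. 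The base case is $\int_0^T F_{0,1}\,dt\le E_0$ from \eqref{EI}. Taking $s=0$ and $r=k$ gives $n=2k$ and $\alpha=\frac{1}{4k-1}$. Hence $\int_0^T\|\partial_t^k w\|_2^{\frac{2}{4k-1}}dt<\infty$, which is \eqref{timederivative}.

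\textbf{Main obstacle.} The delicate part is Step 1 for mixed derivatives. We must check that every nonlinear term, after the Leibniz expansion, is bounded by a product of $F$'s whose orders sum with the correct scaling. Only then does the dimensional bookkeeping close with the exponent $\frac{2}{4n-1}$ and not something worse. Zheligovsky's weaker exponent arose precisely from estimating the diffusive and advective pieces of $\partial_t w$ separately. Our plan is to avoid this by never splitting $\partial_t w$. We keep $\partial_t^{r}w$ as a single quantity of order $2r$ inside the hierarchy. We use the equation only when an $L^\infty$-type bound on a low-order time derivative is needed, and there via Agmon applied to $F_{a,b}$ and $F_{a,b+2}$. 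The MHD coupling causes no extra difficulty because the Elsässer form is symmetric. Alternatively, one can work directly in $(u,B)$ and use the cancellation $\int (B\cdot\nabla \partial^\alpha B)\cdot\partial^\alpha u+(B\cdot\nabla\partial^\alpha u)\cdot\partial^\alpha B=0$ for the top-order terms.
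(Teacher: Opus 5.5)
The gap is that your hierarchy never reaches the quantity in the theorem. Every inequality in Step 1 comes from testing the $\partial_t^r\nabla^s$-differentiated system with $\partial_t^r\nabla^s w^{\pm}$. Its dissipative term is therefore always $F_{r,s+1}$, so the averaging step only ever raises the \emph{spatial} index. Nothing in your scheme has $\|\partial_t^{r+1}w\|_{L^2}^2$ as a dissipative term. The induction from order $n$ to $n+1$ thus yields $F_{r,s+1}$ but never $F_{r+1,0}$, although both have order $2r+2$. In particular you never obtain $F_{k,0}\in L^{1/(4k-1)}(0,T)$ for $k\geq 1$. That is the conclusion of the theorem, and it is also the hypothesis $F\in L^{1/p}$ that Lemma \ref{hofflemma} needs to start the chain at time level $k$.

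This is exactly the point the paper says requires ``two steps''. Besides testing with $D^{2}\partial_t^{r}(u,B)$, it tests the $\partial_t^r$-differentiated system with $\partial_t^{r+1}(u,B)$. This gives $\frac12\frac{d}{dt}\|\nabla\partial_t^r(u,B)\|_{L^2}^2+\|\partial_t^{r+1}(u,B)\|_{L^2}^2\le\cdots$, and adding the two tests gives \eqref{3.38v1}, whose dissipation contains $\|u_t^{(r+1)}\|_{L^2}^2+\|B_t^{(r+1)}\|_{L^2}^2$.

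Your remark about converting time derivatives through the equation would also close the gap, if you used it for this purpose. From $\partial_t^{r+1}w^{\pm}=\Delta\partial_t^r w^{\pm}-\mathbb{P}\,\partial_t^r(w^{\mp}\cdot\nabla w^{\pm})$ you get $\|\partial_t^{r+1}w\|_{L^2}\le\|\nabla^2\partial_t^r w\|_{L^2}+C\sum_{a}\|\nabla\partial_t^a w\|_{L^2}\|\nabla\partial_t^{r-a}w\|_{L^2}^{1/2}\|\nabla^2\partial_t^{r-a}w\|_{L^2}^{1/2}$. H\"older in time, which is valid for exponents below $1$, then applies with $\frac{4a+1}{2}+\frac{4(r-a)+1}{4}+\frac{4(r-a)+3}{4}=\frac{4r+3}{2}$. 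This gives $\partial_t^{r+1}w\in L^{2/(4r+3)}(0,T;L^2)$. However, your last paragraph explicitly rules this out: you keep $\partial_t^r w$ as a single quantity and use the equation only for $L^\infty$ bounds. As written, no step bounds $\|\partial_t^{r+1}w\|_{L^2}$.

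Step 2 also fails in the form you state it. Dividing by a power of $1+F_n$ and applying H\"older works only if the right-hand side is a power of the current functional. The nonlinear terms, however, couple the current level to the \emph{dissipation} of lower levels. For example, $\int\partial_t^r w^{\mp}\cdot\nabla w^{\pm}\cdot\partial_t^r w^{\pm}\,dx$ produces $\|\partial_t^r w\|_{L^2}^2\|\nabla w\|_{L^2}\|\nabla^2 w\|_{L^2}$. After dividing by $(1+F_{r,0})^{4r/(4r-1)}$ you would need $\int_0^T\|\nabla w\|_{L^2}\|\nabla^2 w\|_{L^2}\,dt<\infty$. That product is only known to lie in $L^{1/2}(0,T)$, and its integrability would give $w\in L^2(0,T;L^\infty)$, a Serrin-type condition.

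The paper handles this in two moves. First, Young's inequality splits such terms into $CF^{q/p}$ plus $\eta\,(\text{lower energy})^{\gamma}\|D^2\partial_t^j(u,B)\|_{L^2}^2$. Second, $F_{r,0}$ and $F_{r,1}$ in \eqref{3.52} are defined as sums containing $F_{j,0}^{(4r-1)/(4j-1)}$ and $F_{j,1}^{(4r-1)/(4j+1)}$. Their differential inequalities supply the dissipation $F_{j,1}^{\gamma}G_{j,1}$ that absorbs the $\eta$-terms. Only then does one reach the closed form $\frac{d}{dt}F+G\le CF^{q/p}$ required by Lemma \ref{hofflemma}, and you need this weighted-sum construction explicitly.

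Your Elsässer variables are a genuine simplification for the cancellation, since the top-order transport term vanishes by $\Div w^{\mp}=0$, but they do not affect either issue. Finally, $k=0$ is degenerate: $L^\infty(0,T)$ is not contained in any ``$L^{-2}(0,T)$'', so that case should simply be read as the energy bound.
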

Furthermore, we can show that
  \begin{theorem}\label{the1.1}
Assume that  the pair  $(u, B) $ is a smooth solution of    the  MHD equations  \eqref{MHD}. Then for each $T>0$, there holds
\be\label{1.1}\ba
   D ^{n }u^{(m)}_{t},     D ^{n }B^{(m)}_{t}\in L^{\f{2 }{4m +2n-1}}(0,T; L^{2}(\mathbb{R}^{3})),  \ea\ee
where $m$ and $n$ are  any nonnegative integer.
\end{theorem}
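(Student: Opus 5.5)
We follow Duff's scheme for Navier--Stokes, adapted to the coupled system, and we take Theorem \ref{the1.2} (the case $n=0$) as already established. For the MHD system it is convenient to pass to Elsässer variables $z^{\pm}=u\pm B$, which satisfy
$$\partial_t z^{\pm}-\Delta z^{\pm}+z^{\mp}\cdot\nabla z^{\pm}+\nabla\Pi=0,\qquad \Div z^{\pm}=0.$$
These are two coupled Navier--Stokes-like equations with the same bilinear structure, so the two components can be treated symmetrically. We argue by induction on the weight $N=2m+n$, with basic energy class \eqref{EI} as the base case: $N=0$ gives $L^\infty L^2$ and $N=1$ gives $L^2L^2$ for $\nabla u,\nabla B$. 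The claim is that $D^n\partial_t^m z^{\pm}\in L^{2/(2N-1)}(0,T;L^2)$.

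The core estimate is a weighted energy inequality. Apply $D^n\partial_t^m$ to the equation for $z^{\pm}$ and test with $D^n\partial_t^m z^{\pm}$, multiplied by a time weight $\phi(t)^{\alpha}$. Here $\phi(t)=\min\{1,\,E(t)^{-\beta}\}$ or, following Duff, a weight built from
$$\Phi(t)=\|\nabla u\|_{L^2}^2+\|\nabla B\|_{L^2}^2,$$
which is integrable in time by \eqref{EI}. The pressure term drops out by incompressibility. The Leibniz expansion of the convective term gives terms of the form
$$\int \bigl(D^{a}\partial_t^{i}z^{\mp}\cdot\nabla D^{b}\partial_t^{j}z^{\pm}\bigr)\cdot D^n\partial_t^m z^{\pm}.$$
Each such term is bounded by Gagliardo--Nirenberg and Hölder in $x$, using that $\partial_t$ counts as two spatial derivatives. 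The top-order term where all derivatives fall on $z^{\pm}$ cancels because $\Div z^{\mp}=0$. The remaining terms are interpolated between $\|D^n\partial_t^m z\|_{L^2}$ and the dissipation $\|D^{n+1}\partial_t^m z\|_{L^2}$, plus lower-weight norms controlled by the inductive hypothesis.

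Integrating the weighted inequality in time yields $\phi^{\alpha}\|D^n\partial_t^m z\|_{L^2}^2\in L^\infty$ and $\phi^{\alpha}\|D^{n+1}\partial_t^m z\|_{L^2}^2\in L^1$. The weight is then removed by Hölder's inequality in time. Because $\phi^{-1}$ lies in a weak $L^p$ class determined by $\Phi\in L^1$, this converts the bounds into the unweighted integrability $L^{2/(2N-1)}$. This is Duff's device of trading powers of $\Phi$ against time integrability, and the exponents match the scaling $\lambda^{2N-1}$ of the quantity $\|D^n\partial_t^mz\|_{L^2}^{2/(2N-1)}$ under $z\mapsto\lambda z(\lambda x,\lambda^2 t)$. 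Since the system is invariant under this scaling, this dimensional consistency pins down every exponent.

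An alternative route to the mixed derivatives is to bootstrap from Theorem \ref{the1.2} and the known spatial bounds $D^s u, D^s B\in L^{2/(2s-1)}L^2$ by using the equation. One writes $\partial_t z=\Delta z-\mathbb{P}(z^{\mp}\cdot\nabla z^{\pm})$ and trades one $\partial_t$ for two spatial derivatives plus a nonlinear term. The nonlinear term is estimated in $L^{2/(2N-1)}L^2$ by Hölder in time together with the inductive bounds, using the Kato--Ponce product rule and the fact that the Leray projector $\mathbb{P}$ is bounded on $L^2$. Repeating this reduces $D^n\partial_t^m z$ to pure spatial derivatives $D^{n+2m}z$ plus sums of products of lower-order mixed derivatives. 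For each product the Hölder exponents add correctly: the time exponents sum as $\sum(2N_i-1)/2$, with spatial weights adjusted by Sobolev embedding.

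The main obstacle is precisely this product estimate for the lower-order nonlinear terms. Some factors must be placed in $L^\infty_x$ or $L^3_x$, and then the available bounds in quasi-norm spaces $L^{p}$ with $p<1$ must still combine via Hölder. This requires the low-order endpoint bounds, such as $u,B\in L^1L^\infty$ from \cite{[Wu]}, and careful bookkeeping to ensure each term has total weight exactly $N$. I would first try to handle it by the Gagliardo--Nirenberg interpolation
$$\|D^j w\|_{L^\infty}\lesssim\|D^{j+1}w\|_{L^2}^{1/2}\|D^{j+2}w\|_{L^2}^{1/2},$$
which keeps every factor in $L^2_x$-based classes covered by the induction.
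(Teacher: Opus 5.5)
\textbf{There is a genuine gap.} Your primary mechanism, the time weight $\phi(t)^{\alpha}$ built from $\Phi=\|\nabla u\|_{L^2}^2+\|\nabla B\|_{L^2}^2$ (or from the energy), does not work. So the claim that integrating the weighted inequality gives $\phi^\alpha X\in L^\infty$ and $\phi^\alpha Y\in L^1$ is unsupported.

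At each level the energy inequality reads $X'+Y\le CX^{(2N+1)/(2N-1)}+\cdots$, with $X=\|D^n\partial_t^m z\|_{L^2}^2$ and $Y=\|D^{n+1}\partial_t^m z\|_{L^2}^2$ (compare \eqref{4.13v3}, \eqref{4.15v1}). The right side is superlinear in the very quantity being estimated, and only $X^{1/(2N-1)}\in L^1(0,T)$ is known. A weight depending only on $\Phi$ gives no reason for $\phi^\alpha X^{(2N+1)/(2N-1)}$ to be integrable. Moreover, differentiating the weight creates $\alpha\phi^{\alpha-1}\phi' X$, where $\phi'$ involves $\frac{d}{dt}\Phi$. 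That derivative is controlled only by $\|D^2u\|_{L^2}^2+\|D^2B\|_{L^2}^2+C\Phi^3$, which again is not integrable against $X$.

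The device that works is Lemma \ref{hofflemma}. The weight must be a function of the estimated quantity itself, $(1+F)^{-(q-1)/p}$. Then the time-derivative term is an exact derivative, $F^{q/p}(1+F)^{-(q-1)/p}\le (1+F)^{1/p}\in L^1$, and H\"older removes the weight.

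In addition, $F$ must be augmented by powers of lower-level quantities. The inequalities contain terms such as $\Phi^{2s+1}$ in \eqref{4.15v1} or $\eta\,\Phi^{4}(\|D^2u\|_{L^2}^2+\|D^2B\|_{L^2}^2)$ in \eqref{3.24v1}, which the induction hypothesis does not place in $L^1(0,T)$. The paper absorbs them by putting $\frac{1}{2s-1}\Phi^{2s-1}$, resp.\ the powers $F_{\ell,q}^{(\cdot)}$, into $F_{p,q}$. None of this appears in your sketch.

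\textbf{Your alternative route is a correct and genuinely different reduction.} Write $\partial_t z^\pm=\Delta z^\pm-\mathbb{P}(z^\mp\cdot\nabla z^\pm)$ and use $\|D^a w\|_{L^\infty}\lesssim \|D^{a+1}w\|_{L^2}^{1/2}\|D^{a+2}w\|_{L^2}^{1/2}$. Each term of $D^n\partial_t^{m-1}(z^\mp\cdot\nabla z^\pm)$ is then bounded by $\|D^{a+1}\partial_t^jz\|_{L^2}^{1/2}\|D^{a+2}\partial_t^jz\|_{L^2}^{1/2}\|D^{b+1}\partial_t^{m-1-j}z\|_{L^2}$, with $a+b=n$ and $0\le j\le m-1$. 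The factors have weights $a+1+2j$, $a+2+2j$ and $b+2m-1-2j$. Since weight $w$ corresponds to $L^{2/(2w-1)}(0,T)$, H\"older in time (valid also for exponents below $1$) puts the product exactly in $L^{2/(2N-1)}$. The only factor of full weight $N$ is $D^{n+2}\partial_t^{m-1}z$, so an induction on $N$ and then on $m$ closes.

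This even reproves Theorem \ref{the1.2}. It bypasses all the time-derivative energy inequalities (\eqref{3.13v2}, \eqref{3.24v1}, Lemma \ref{lema3.2}, and \eqref{4.13v3} for $r\ge 1$) and the $F_{p,q}$ bookkeeping. That is a real simplification. The paper's route, in exchange, produces a differential inequality $\frac{d}{dt}F_{p,q}+G_{p,q}\le CF_{p,q}^{(4p+2q+1)/(4p+2q-1)}$ at every level.

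\textbf{The missing input.} The route rests on $D^su,D^sB\in L^{2/(2s-1)}(0,T;L^2(\mathbb{R}^3))$, which you may not cite here. Kim's bound is on $\mathbb{T}^3$, and this estimate is precisely the case $m=0$ of the statement (Case 1 of the paper's proof). Your only tool for it is the flawed weighted estimate. To close the gap:
\begin{enumerate}
\item Prove \eqref{4.15v1}. In Els\"asser variables the top-order term vanishes because $\mathrm{div}\, z^\mp=0$; the rest is \eqref{katoponce} plus \eqref{keyinequality}.
\item Set $F=\|D^su\|_{L^2}^2+\|D^sB\|_{L^2}^2+\frac{1}{2s-1}\Phi^{2s-1}$, and use $\frac{d}{dt}\Phi\le C\Phi^3$ from \eqref{3.2v1} to get $F'+\|D^{s+1}u\|_{L^2}^2+\|D^{s+1}B\|_{L^2}^2\le CF^{(2s+1)/(2s-1)}$.
\item Apply Lemma \ref{hofflemma} inductively in $s\ge 2$. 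The step $s=1$ is \eqref{3.2v1} itself together with $\Phi\in L^1$.
\end{enumerate}
With this in place, the appeal to $u,B\in L^1L^\infty$ from \cite{[Wu]} is unnecessary.
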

The aforementioned
 derivative estimates for the MHD   equations \eqref{MHD}
  answer  the question posed by Zheligovsky in \cite{[Zheligovsky]}.
As pointed out by
Zheligovsky \cite{[Zheligovsky]},
cancellation of nonlinear terms in the energy estimate
plays an important role in the proof of above theorems (see Lemma \ref{lema3.2} and \ref{lemma4.1}).
The deduction of the time derivation is more complicated than that of the spatial derivation.
Roughly speaking,
we can get the energy inequality in terms of $D ^{ s}\cdot $ and $D^{s+1}\cdot$ by applying the operator $D ^{2s}$ to the equations in one step (see e.g. \eqref{3.2v1}). However,
 it seems that it break down for the time derivative.
 We need two steps to get the energy inequalities based on
$\cdot _{t}^{(r)}$   and $\cdot ^{(r+1)}_{t}$ (see e.g. \eqref{3.13v2} and \eqref{3.24v1}).
Compared with previous works \cite{[WW],[Duff]}, coupling the velocity with the magnetic field is required to be dealt with. To the knowledge of authors, it seems to be the first extension of Duff's space-time derivative estimates to a system consisting of the Navier-Stokes equations and  the evolution of other physical quantity.

Moreover, the  cancellation of interaction  terms between the velocity and the magnetic field  seems to break down for the $L^{p}$ ($p\neq2$) type energy estimate in the   MHD   equations \eqref{MHD}. It is difficult to apply the
nonlinear $p$-Laplace operator to construct a test function used by M\'alek,    Padula  and   R\r{ u}\v{z}i\v{c}ka in \cite{[MPR]} to get $\nabla  {u}, \nabla  {B} \in L^{\f{q}{2q-3}}(0,T;L^{q}(\mathbb{T}^{3})),~2 \leq q<\infty. $ Here, in the spirit of our recent work \cite{[WW]}, as a by-product of Theorem \ref{the1.1},   we have the following general bound.
\begin{coro}\label{coro} Let the pair  $(u, B) $ be a smooth solution of    the   MHD system \eqref{MHD}. Then
$  u, B   \in L^{\f{q}{q-3}}(0,T;L^{q} (\mathbb{R}^{3}))$ with $  6\leq q\leq\infty $  and $ D ^{k}u,  D ^{k}B   \in L^{\f{q}{ q(k+1)-3}}(0,T;L^{q} (\mathbb{R}^{3})) $ for $k\geq 1, 2\leq q\leq\infty $.
\end{coro}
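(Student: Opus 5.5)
The corollary should follow from Theorem \ref{the1.1} with $m=0$, i.e.
\[
D^{n}u,\ D^{n}B\in L^{\frac{2}{2n-1}}(0,T;L^{2}(\mathbb{R}^{3}))\quad\text{for all } n\ge 0,
\]
combined with Gagliardo--Nirenberg interpolation and H\"older's inequality in time. The scaling bookkeeping is the same for every case. A quantity $D^{j}u(t)$ whose $L^{q}$ norm lies in $L^{p}_t$ with $\frac{1}{p}=\frac{q(j+1)-3}{q}=j+1-\frac{3}{q}$ has "scaling weight" $\frac1p$. The bounds from Theorem \ref{the1.1} are $\|D^{n}u(t)\|_{L^2}\in L^{\frac{2}{2n-1}}_t$, which has weight $n-\frac12=n+1-\frac32$, exactly the case $q=2$ of the claimed family. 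So the case $k\ge1$, $q=2$ is literally Theorem \ref{the1.1} with $m=0$, $n=k$. The same computations apply verbatim to $B$, since Theorem \ref{the1.1} treats $u$ and $B$ symmetrically.

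For $k\ge 1$ and $2<q\le\infty$, I would choose an integer $N>k+\frac32$ and use the Gagliardo--Nirenberg inequality
\[
\|D^{k}u\|_{L^{q}}\le C\|D^{k}u\|_{L^{2}}^{1-\theta}\|D^{N}u\|_{L^{2}}^{\theta},\qquad \theta=\frac{\frac32-\frac3q}{N-k}\in[0,1).
\]
For $q=\infty$ this requires $N-k>\frac32$, which is why $N$ is chosen as above. Raising to the power $p$ and applying H\"older in time with exponents matched to $\frac{2}{2k-1}$ and $\frac{2}{2N-1}$ gives
\[
\frac1p=(1-\theta)\Big(k-\tfrac12\Big)+\theta\Big(N-\tfrac12\Big)=k-\tfrac12+\theta(N-k)=k+1-\frac3q,
\]
which is exactly the exponent $\frac{q}{q(k+1)-3}$ claimed. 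Because the time exponents are less than $1$, Lebesgue "norms" are only quasi-norms. H\"older's inequality nonetheless holds for all positive exponents, so $\int_0^T f^{(1-\theta)p}g^{\theta p}\,dt\le (\int_0^T f^{a})^{\cdot}(\int_0^T g^{b})^{\cdot}$ with conjugate ratios is valid.

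For $u$ itself with $6\le q\le\infty$, the endpoint $q=6$ gives weight $\frac12$, which matches $\nabla u\in L^{2}_tL^{2}_x$ through the Sobolev embedding $\|u\|_{L^{6}}\le C\|\nabla u\|_{L^{2}}$. For $6<q\le\infty$, I would interpolate $\|u\|_{L^{q}}\le C\|\nabla u\|_{L^{2}}^{1-\theta}\|D^{N}u\|_{L^{2}}^{\theta}$ with $N\ge2$ and $\theta=\frac{\frac12-\frac3q}{N-1}$; the homogeneous inequality is valid since $u$ decays. The same time-H\"older computation yields weight $\frac12+\theta(N-1)=1-\frac3q$, i.e. $u\in L^{\frac{q}{q-3}}_tL^{q}_x$. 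The restriction $q\ge 6$ arises because below $6$ one would need to interpolate with the energy bound $L^\infty_tL^2_x$, which has weight $0$ and does not match the scaling of the $\frac{q}{q-3}$ family at $q=2$.

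The main obstacle is rigor at the endpoint $q=\infty$ and with the quasi-norm exponents; the analysis itself is straightforward. Gagliardo--Nirenberg into $L^\infty$ needs the strict inequality $N-k>\frac32$, which is guaranteed by taking $N$ large. The constant $C$ is uniform, and Theorem \ref{the1.1} supplies finiteness of every $\int_0^T\|D^{N}u\|_{L^2}^{\frac{2}{2N-1}}\,dt$ for arbitrary $N$, so no further ingredient is needed.
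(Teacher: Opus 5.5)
Your proof is correct, but it is organized differently from the paper's.

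The paper works in two steps.
\begin{itemize}
\item It first proves the endpoint bound $D^k u, D^k B\in L^{\frac{1}{k+1}}(0,T;L^\infty(\mathbb{R}^3))$ for all $k\ge 0$. This uses the single Gagliardo--Nirenberg inequality $\|D^k u\|_{L^\infty}\le C\|u\|_{L^6}^{\frac{1}{2(k+1)}}\|D^{k+2}u\|_{L^2}^{\frac{2k+1}{2(k+1)}}$, together with $\nabla u\in L^2L^2$, $D^{k+2}u\in L^{\frac{2}{2k+3}}L^2$ and H\"older in time.
\item It then gets every intermediate $q$ from the elementary Lebesgue interpolations $\|u\|_{L^q}\le\|u\|_{L^6}^{6/q}\|u\|_{L^\infty}^{1-6/q}$ and $\|D^ku\|_{L^q}\le\|D^ku\|_{L^2}^{2/q}\|D^ku\|_{L^\infty}^{1-2/q}$, followed by a second H\"older step in time.
\end{itemize}

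You instead go directly, for each $q$, from two $L^2$-based levels to $L^q$ by one Gagliardo--Nirenberg inequality and one H\"older step. The two levels are $\dot H^k$ (or $\dot H^1$ for $u$ itself) and $\dot H^N$.
\begin{itemize}
\item Your exponent bookkeeping checks out. The relations $\theta(N-k)=\frac32-\frac3q$ (resp.\ $\theta(N-1)=\frac12-\frac3q$) are right, and the resulting time-H\"older exponents sum exactly to $1$.
\item Your conditions $N-k>\frac32$ (resp.\ $N\ge 2$) are exactly what is needed at $q=\infty$.
\item Both routes use only the case $m=0$ of Theorem \ref{the1.1} plus the energy class.
\end{itemize}

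What each route buys:
\begin{itemize}
\item The paper's route isolates the limiting case $q=\infty$ as an explicit intermediate statement; for $k=0$ this recovers the known $u,B\in L^1L^\infty$. It also needs Gagliardo--Nirenberg only at that endpoint.
\item Your route treats all $q$ uniformly in one stroke and avoids the intermediate result. It also leaves the top level $N$ free, at the cost of invoking Gagliardo--Nirenberg for general $q$.
\end{itemize}
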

We extend M\'alek,    Padula  and   R\r{ u}\v{z}i\v{c}ka's  corresponding  estimate as well as its limiting case for solutions of the Navier-Stokes equations to those of the MHD equations \eqref{MHD}.

It is worth pointing out that the difference between the operators  $ D  $ and  $(-\Delta)^{1/2}$ is a Riesz operator. Hence, the classical Calder\'on-Zygmund Theorem allows us to replace $ D  $  by  $(-\Delta)^{1/2}$ in Theorem \ref{the1.1}  and Corollary \ref{coro}.

The present paper is built up as follows. Section 2 is devoted to some notations and
auxiliary lemmas. In Section 3, we study time derivative  estimates of solutions to the MHD equations. Section 4 is concerned with space-time
derivative estimates of solutions to the MHD equations.
\section{Preliminaries}
\label{pre}
\setcounter{section}{2}\setcounter{equation}{0}
The classical Sobolev norm $\|\cdot\|_{H^{s}}$  is defined as   $\|f\|^{2} _{{H}^{s}}= \int_{\mathbb{R}^{n}} (1+|\xi|)^{2s}|\hat{f}(\xi)|^{2}d\xi$, $s\in \mathbb{R}$.
  We denote by  $ \dot{H}^{s}$ homogenous Sobolev spaces with the norm $\|f\|^{2} _{\dot{H}^{s}}= \int_{\mathbb{R}^{n}} |\xi|^{2s}|\hat{f}(\xi)|^{2}d\xi$.
 For $q\in [1,\,\infty]$, the notation $L^{q}(0,\,T;\,X)$ stands for the set of measurable functions on the interval $(0,\,T)$ with values in $X$ and $\|f(t,\cdot)\|_{X}$ belonging to $L^{q}(0,\,T)$.
  To simplify the presentation,  we write
 $$  \|f\|_{L^p(\mathbb{R}^{3})}=\|f\|_{L^p}, \|f\|_{\dot{H}^{s}(\mathbb{R}^{3})}=\|f\|_{\dot{H}^{s}}.$$
 A vector $\alpha=(\alpha_{1},\alpha_{2},\cdots,\alpha_{n})$ is called a multi-index of order $|\alpha|=  \alpha_{1}+\alpha_{2}+\cdots+\alpha_{n},$ where $\alpha_{1}, \alpha_{2},\cdots,\alpha_{n}$ are nonnegative integer. Let
$k$ be a nonnegative integer,
$$
D^{k}u(x)=\{D^{\alpha}u(x): |\alpha|=k\},
$$
with
$$
D^{\alpha}f(x)=\f{\partial^{|\alpha|}f(x)}{\partial x_{1}^{\alpha_{1}}\partial x_{2}^{\alpha_{2}}\cdots\partial x_{n}^{\alpha_{n}}}=\partial _{x_{1}}^{\alpha_{1}}\partial_{x_{2}}^{\alpha_{2}}\cdots\partial_{x_{n}}^{\alpha_{n}}f(x).
$$
As a special case, we have $Df=\nabla f$.
\begin{lemma}(\cite{[Duff],[WW]})\label{hofflemma}
Let $F(t)\geq0$  and $F(t)\in L^{\f{1}{p}}(0,T)$. Suppose that  $G(t)\geq0$  and
\be\label{2.1}
\f{d}{dt}F(t)+G(t)\leq CF(t)^{\f{q}{p}}.
\ee
Then for $q>p+1$, there holds $G(t)\in L^{\f{1}{q}}(0,T)$.

\end{lemma}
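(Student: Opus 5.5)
The statement to prove is Lemma \ref{hofflemma}. The natural route is to work with a negative power of $F$ and spend the available integrability of $F$ through Hölder's inequality. Set $\varepsilon>0$ small and divide \eqref{2.1} by $(F+\varepsilon)^{\beta}$ for a suitable exponent $\beta>1$. Replacing $F$ by $F+\varepsilon$ avoids division by zero; we let $\varepsilon\to0$ at the end, using monotone convergence for the $G$-term. The left side then contains the derivative
\[
\frac{d}{dt}\Big(\frac{(F+\varepsilon)^{1-\beta}}{1-\beta}\Big),
\]
which integrates to a bounded quantity: since $\beta>1$, the antiderivative $-\frac{1}{\beta-1}(F+\varepsilon)^{1-\beta}$ is negative. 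This gives
\[
\int_0^T \frac{G}{(F+\varepsilon)^{\beta}}\,dt\le \frac{1}{\beta-1}(F(0)+\varepsilon)^{1-\beta}+C\int_0^T (F+\varepsilon)^{\frac{q}{p}-\beta}\,dt .
\]
Here we choose $\beta=\frac{q}{p}-\frac1p$, so that $\frac qp-\beta=\frac1p$ and the last integral is finite because $F\in L^{1/p}$. The requirement $\beta>1$ is exactly $q>p+1$, which is the hypothesis. Uniformity as $\varepsilon\to0$ is the delicate point: the boundary term blows up as $\varepsilon\to 0$ if $F(0)=0$. I would instead keep $\varepsilon$ fixed, say $\varepsilon=1$, or multiply through by a constant; any fixed $\varepsilon$ is fine since we only need finiteness.

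Next, recover $G^{1/q}$ by Hölder's inequality. Write
\[
G^{1/q}=\Big(\frac{G}{(F+\varepsilon)^{\beta}}\Big)^{1/q}(F+\varepsilon)^{\beta/q}.
\]
Apply Hölder with exponents $q$ and $q'=\frac{q}{q-1}$:
\[
\int_0^T G^{1/q}\,dt\le\Big(\int_0^T\frac{G}{(F+\varepsilon)^{\beta}}\,dt\Big)^{1/q}\Big(\int_0^T(F+\varepsilon)^{\frac{\beta}{q-1}}\,dt\Big)^{\frac{q-1}{q}}.
\]
Now $\frac{\beta}{q-1}=\frac{q-1}{p(q-1)}=\frac1p$, so the second factor is also controlled by $\|F\|_{L^{1/p}}$ plus $\varepsilon^{1/p}T$. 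This is the key numerical coincidence making the choice of $\beta$ consistent, and it needs $q>1$; $q>p+1>1$ when $p>0$, which is implicit in the hypotheses. With $\varepsilon=1$ both factors are finite, so $G\in L^{1/q}(0,T)$.

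The main obstacle is justifying the differentiation of $(F+1)^{1-\beta}$ when $F$ is only assumed to satisfy the differential inequality. In the application $F$ is a smooth norm of a smooth solution, so $F$ is absolutely continuous and the chain rule is legitimate. I would state this regularity tacitly, as in \cite{[Duff],[WW]}.
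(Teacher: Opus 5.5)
Your proof is correct and is essentially the paper's argument: divide by $(1+F)^{(q-1)/p}$ (your choice $\varepsilon=1$, $\beta=(q-1)/p$), integrate to obtain $\int_0^T G\,(1+F)^{-(q-1)/p}\,dt<\infty$, then apply H\"older with exponents $q$ and $q/(q-1)$. One small slip: the boundary term is $\frac{1}{\beta-1}\big[(F(T)+\varepsilon)^{1-\beta}-(F(0)+\varepsilon)^{1-\beta}\big]\le\frac{1}{\beta-1}\varepsilon^{1-\beta}$, so it is $F(T)$ and not $F(0)$ that matters (your displayed bound with $F(0)$ can fail when $F(T)\ll F(0)$); with $\varepsilon=1$ this term is still bounded by $\frac{1}{\beta-1}$, so the argument closes.
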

\begin{proof}
Rearranging the inequality \eqref{2.1}, we have
\be
\f{\f{d}{dt}F(t)}{(F(t)+1)^{\f{q-1}{p }}}+\f{G(t)}{(F(t)+1)^{\f{q-1}{p }}}\leq C[F(t)+1]^{\f{1}{p}}.
\ee
A simple  calculation  leads to $$\int_{0}^{T}\f{G (t)}{(1+F(t))^{\f{q-1}{p }}}dt<\infty.$$
Then we deduce from the H\"older inequality that
$$\ba
\int_{0}^{T}G^{\f{1}{q}}(t)dt=&\int_{0}^{T}(1+F(t))^{\f{q-1}{pq}}
\f{G^{\f{1}{q}}(t)}{(1+F(t))^{\f{q-1}{pq}}}dt\\
\leq& \B[\int_{0}^{T}(1+F(t))^{\f1p}dt \B]^{\f{q-1}{ q}}\B[\int_{0}^{T}\f{G (t)}{(1+F(t))^{\f{q-1}{p }}}dt\B]^{\f1q}.
\ea$$
This lemma is  thus proved.
\end{proof}
\begin{lemma} Let  $s>\frac{3}{2}$ and $u\in \dot{H}^{1} \cap \dot{H}^{s} $. Then there holds
\be\label{keyinequality}\|\nabla u \|_{L^{3}}\leq C\| u \|^{\f{2(s+1)  -5 }{2 (s-1)}}_{\dot{H}^{1}}\| u \|_{\dot{H}^{s}}^{\f{1}{2 (s-1)}}.
\ee
\end{lemma}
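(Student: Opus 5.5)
The plan is to combine Sobolev embedding with interpolation between homogeneous Sobolev spaces. First I would reduce the left-hand side of \eqref{keyinequality} to a fractional Sobolev norm. In $\mathbb{R}^{3}$ the embedding $\dot{H}^{1/2}\hookrightarrow L^{3}$ holds, and it follows from the Hardy--Littlewood--Sobolev inequality or directly from $\dot{H}^{\sigma}\hookrightarrow L^{\f{6}{3-2\sigma}}$ with $\sigma=\f12$. Applying it componentwise to $\nabla u$ gives
$$\|\nabla u\|_{L^{3}}\leq C\|\nabla u\|_{\dot{H}^{1/2}}\leq C\|u\|_{\dot{H}^{3/2}},$$
where the last step uses $|\xi|\,|\hat u(\xi)|\cdot|\xi|^{1/2}=|\xi|^{3/2}|\hat u(\xi)|$ on the Fourier side.

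Next I would interpolate $\dot{H}^{3/2}$ between $\dot{H}^{1}$ and $\dot{H}^{s}$. Since $s>\f32$, we can write $\f32=(1-\theta)\cdot1+\theta s$ with $\theta=\f{1/2}{s-1}=\f{1}{2(s-1)}\in(0,1)$. Hölder's inequality on the Fourier side, applied to
$$|\xi|^{3}|\hat u|^{2}=\big(|\xi|^{2}|\hat u|^{2}\big)^{1-\theta}\big(|\xi|^{2s}|\hat u|^{2}\big)^{\theta},$$
with exponents $\f{1}{1-\theta}$ and $\f1\theta$, gives
$$\|u\|_{\dot{H}^{3/2}}\leq\|u\|_{\dot{H}^{1}}^{1-\theta}\|u\|_{\dot{H}^{s}}^{\theta}.$$

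Finally I would check that the exponents match \eqref{keyinequality}. We have $1-\theta=\f{2(s-1)-1}{2(s-1)}=\f{2s-3}{2(s-1)}=\f{2(s+1)-5}{2(s-1)}$ and $\theta=\f{1}{2(s-1)}$, which are exactly the stated powers.

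There is no genuine obstacle here. The only point needing a remark is that the hypothesis $u\in\dot{H}^{1}\cap\dot{H}^{s}$ guarantees $\hat u\in L^{1}_{loc}$ with finite $\dot{H}^{3/2}$ seminorm, so $\nabla u$ is a well-defined tempered distribution. The Sobolev embedding then applies after a density argument with Schwartz functions whose Fourier transforms vanish near the origin.
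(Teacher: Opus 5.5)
Your proof is correct, but it takes a different route from the paper.

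The paper uses a Littlewood--Paley splitting. It writes $\nabla u=\sum_j\dot{\Delta}_j\nabla u$ and applies Bernstein's inequality $\|\dot{\Delta}_j\nabla u\|_{L^{3}}\leq C2^{j/2}\|\dot{\Delta}_j\nabla u\|_{L^{2}}$ to each block. It bounds the low frequencies $j<N$ by $C2^{N/2}\|u\|_{\dot{H}^{1}}$ and the high frequencies $j\geq N$ by $C2^{N(3/2-s)}\|u\|_{\dot{H}^{s}}$. The hypothesis $s>\frac32$ is needed there to make the high-frequency geometric series converge. Finally it chooses $2^{N}\approx(\|u\|_{\dot{H}^{s}}/\|u\|_{\dot{H}^{1}})^{1/(s-1)}$ to balance the two terms.

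You instead pass through $\dot{H}^{3/2}$: the Sobolev embedding $\dot{H}^{1/2}\hookrightarrow L^{3}$ applied to $\nabla u$, then exact log-convexity of homogeneous Sobolev norms via H\"older on the Fourier side. In your argument $s>\frac32$ enters simply as $\theta<1$.

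Your version is shorter, uses only textbook facts, and has constant $1$ in the interpolation step. The cost is that the Sobolev (Hardy--Littlewood--Sobolev) embedding is used as a black box.

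The paper's dyadic argument replaces that black box with Bernstein's inequality on each block. It also controls a larger quantity by the same right-hand side, namely $\sum_j 2^{3j/2}\|\dot{\Delta}_j u\|_{L^{2}}$, the $\dot{B}^{3/2}_{2,1}$ norm. This $\ell^{1}$-summability is what lets the method reach endpoint targets such as $L^{\infty}$. There the analogue of your first step fails, since $\dot{H}^{3/2}(\mathbb{R}^{3})\not\hookrightarrow L^{\infty}$.
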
\begin{proof}
In view of the Bernstein inequality, we see that
\be\ba
\|\nabla u \|_{L^{3}}\leq& C\sum_{j<N}\|\dot{\Delta}_{j}\nabla u \|_{L^{ 3 } } +C\sum_{j\geq N}\|\dot{\Delta}_{j}\nabla u \|_{L^{ 3 }}\\
\leq& C\sum_{j<N}2^{j[1+3(\f12-\f{1}{3})-\alpha
]}\|\dot{\Delta}_{j}\nabla u \|_{L^{2}}+C\sum_{j\geq N}2^{j[1+3(\f12-\f{1}{3})-s
]}2^{js }\|\dot{\Delta}_{j}\nabla u \|_{L^{2}},
\ea\ee
where $\dot{\Delta}_{j}$ is the standard homogeneous dyadic block.\\
We compute
\be\ba
\|\nabla u \|_{L^{3}}
\leq  C 2^{N(
\f52-2
)}\|  u \|_{\dot{H}^{1}}+C2^{N[
\f52-(s+1)
]}\|  u \|_{\dot{H}^{s}}.
\ea\ee
To bound the right hand side of this inequality, we choose $N$ such that
$$ 2^{N(
\f52-2
)}\|  u \|_{\dot{H}^{1}}\approx 2^{N[
\f52-(s+1)
]}\|  u \|_{\dot{H}^{s}}.$$
Hence, we get \eqref{keyinequality}. The proof of this lemma is completed.
\end{proof}
Denote the operator $\Lambda=(-\Delta)^{\frac{1}{2}}$. We end this section with a homogeneous version of Kato-Ponce commutator estimate and its corollary, a fractional Leibniz rule.
\begin{lemma}(\cite[Corollary 5.2, p. 68]{[Li]})\label{odelemma}
Let $s>0$ and $1< p< \infty$. Then for any two Schwartz functions $f,g$ on $\mathbb{R}^{d}$, there holds
\be\label{katoponce}
\|\Lambda^{s}(fg)-f\Lambda^{s} g\|_{L^{p } }\leq
C\|\nabla f\|_{L^{p_{1}  } }\|\Lambda^{s-1}g\|_{L^{p_{2}} }
+C\|\Lambda^{s} f\|_{L^{p_{3}} }\|  g\|_{L^{p_{4} } },
\ee
where $1<p_{1}, p_{2}, p_{3}, p_{4}\leq\infty $ and $\f{1}{p }=\f{1}{p_{1}} +\f{1}{p_{2}}= \f{1}{p_{3}} +\f{1}{p_{4}}$.
\end{lemma}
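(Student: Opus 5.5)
The plan is to prove \eqref{katoponce} by the Bony paraproduct decomposition combined with Coifman--Meyer type multiplier bounds, which is the route taken in \cite{[Li]}. Let $\dot{\Delta}_{j}$ denote the homogeneous dyadic blocks and $S_{j}=\sum_{k<j}\dot{\Delta}_{k}$ the low-frequency cut-offs. We split
$$fg=\sum_{j}S_{j-2}f\,\dot{\Delta}_{j}g+\sum_{j}\dot{\Delta}_{j}f\,S_{j-2}g+\sum_{|j-k|\leq 1}\dot{\Delta}_{j}f\,\dot{\Delta}_{k}g=:T_{f}g+T_{g}f+R(f,g).$$
We apply the same split to $f\Lambda^{s}g$, with $g$ replaced by $\Lambda^{s}g$. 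The commutator $\Lambda^{s}(fg)-f\Lambda^{s}g$ then breaks into three groups:
\begin{itemize}
\item[(i)] the low--high commutator $\sum_{j}[\Lambda^{s},S_{j-2}f]\dot{\Delta}_{j}g$;
\item[(ii)] the high--low pieces $\Lambda^{s}(T_{g}f)-T_{\Lambda^{s}g}f$;
\item[(iii)] the high--high pieces $\Lambda^{s}R(f,g)-R(f,\Lambda^{s}g)$.
\end{itemize}

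For group (ii), each summand of $T_{\Lambda^{s}g}f$ has the form $\dot{\Delta}_{j}f\,S_{j-2}\Lambda^{s}g$. Since $S_{j-2}\Lambda^{s}g$ has frequencies below $2^{j}$, we move $\Lambda^{s}$ onto $\dot{\Delta}_{j}f$ at the cost of a bounded multiplier. The same holds for $\Lambda^{s}(\dot{\Delta}_{j}f\,S_{j-2}g)$, because the product is localized at frequency $2^{j}$. Both pieces are therefore bilinear Coifman--Meyer operators acting on $(\Lambda^{s}f,g)$. Using the Littlewood--Paley square function, the pointwise bound $|S_{j}g|\lesssim Mg$ and the Fefferman--Stein vector-valued maximal inequality, we get the bound $C\|\Lambda^{s}f\|_{L^{p_{3}}}\|g\|_{L^{p_{4}}}$. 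This works including $p_{4}=\infty$, since $\|Mg\|_{L^{\infty}}\leq\|g\|_{L^{\infty}}$.

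Group (iii) is treated the same way. Here $\Lambda^{s}R(f,g)$ lives at frequencies $\lesssim 2^{j}$, so
$$\|\dot{\Delta}_{k}\Lambda^{s}R\|\lesssim\sum_{j\geq k-2}2^{ks}|\dot{\Delta}_{j}f|\,|\dot{\Delta}_{j}g|=\sum_{j\geq k-2}2^{(k-j)s}\,2^{js}|\dot{\Delta}_{j}f|\,|\dot{\Delta}_{j}g|.$$
The geometric factor $2^{(k-j)s}$ is summable exactly because $s>0$, and this is where that hypothesis is used. The term $R(f,\Lambda^{s}g)$ has summands $\dot{\Delta}_{j}f\,\Lambda^{s}\tilde{\Delta}_{j}g$, and $2^{js}|\dot{\Delta}_{j}f|$ is again controlled by the square function of $\Lambda^{s}f$. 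Both terms are thus bounded by $C\|\Lambda^{s}f\|_{L^{p_{3}}}\|g\|_{L^{p_{4}}}$.

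The main obstacle is group (i), the genuine commutator. I would write
$$[\Lambda^{s},S_{j-2}f]\dot{\Delta}_{j}g(x)=\int K_{j}(x-y)\big(S_{j-2}f(y)-S_{j-2}f(x)\big)\tilde{\Delta}_{j}g(y)\,dy,$$
where $K_{j}$ is the kernel of $\Lambda^{s}\tilde{\Delta}_{j}$, which satisfies $|K_{j}(z)|\lesssim 2^{js}2^{3j}(1+2^{j}|z|)^{-N}$. By the mean value theorem the difference $S_{j-2}f(y)-S_{j-2}f(x)$ contributes a factor $|x-y|$ times $\nabla S_{j-2}f$, and this gains $2^{-j}$. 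The resulting operator acts like $\nabla f$ times $2^{j(s-1)}\tilde{\Delta}_{j}g$, which is the Littlewood--Paley piece of $\Lambda^{s-1}g$. To make this rigorous without a pointwise maximal bound on a nonlocal difference, I would first try expanding the symbol $|\xi+\eta|^{s}-|\eta|^{s}$ for $|\xi|\ll|\eta|$ via Taylor's formula. This gives $\xi\cdot\nabla|\eta|^{s}$ plus a remainder, which is a smooth Coifman--Meyer symbol applied to $(\nabla f,\Lambda^{s-1}g)$. The endpoint $p_{1}=\infty$ is harmless, since $\nabla S_{j}f$ is then bounded uniformly in $j$. The endpoint $p_{2}=\infty$ with target $L^{p}$, $p<\infty$, forces $p_{1}=p$, and then the square function is taken on the $\nabla f$ side. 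Summing the three groups gives \eqref{katoponce}, first for Schwartz $f,g$ and then by density wherever the right-hand side is finite.
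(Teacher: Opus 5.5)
The paper gives no argument for this lemma; it is quoted from Li's Corollary~5.2. Your paraproduct architecture is the standard one. In the interior range $1<p_1,p_2,p_3,p_4<\infty$ your sketch goes through once the bookkeeping is fixed: with $S_{j-2}$ in both paraproducts, the resonant part must run over $|j-k|\le 2$, otherwise the pairs with $|j-k|=2$ are lost. That interior range is also all the paper uses; e.g.\ \eqref{3.5} has $p=6/5$, $(p_1,p_2)=(3,2)$, $(p_3,p_4)=(2,3)$.

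The gap is at the endpoints $p_i=\infty$, which the statement explicitly allows. There, square functions plus Fefferman--Stein are not enough. You need the $L^p$ bound for a paraproduct whose \emph{high}-frequency slot carries an $L^\infty$ (hence BMO) function. That is a Carleson-measure estimate, equivalently the endpoint $L^p\times L^\infty\to L^p$ case of the Coifman--Meyer theorem. Your endpoint arguments break exactly at these places:
\begin{itemize}
\item In (i) with $p_2=\infty$, $p_1=p$, you cannot ``take the square function on the $\nabla f$ side''. The function $f$ enters only through $S_{j-2}\nabla f$, and $\sum_j|S_{j-2}\nabla f|^2$ diverges wherever $\nabla f\neq 0$. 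The leading term is $\sum_j S_{j-2}(\nabla f)\,\dot{\Delta}_jG$ with $G$ a Riesz transform of $\Lambda^{s-1}g\in L^\infty$, which is precisely the BMO paraproduct.
\item In (iii), the pieces $\dot{\Delta}_jf\,\Lambda^s\tilde{\Delta}_jg$ of $R(f,\Lambda^sg)$ have Fourier support in balls, not annuli, and no outer $\Lambda^s$ supplies a factor $2^{(k-j)s}$. So a pointwise bound of their $\ell^2$-sum by $S(\Lambda^sf)\sup_j|2^{-js}\Lambda^s\tilde{\Delta}_jg|$ controls nothing. The honest elementary bound is Cauchy--Schwarz in $j$, giving $S(\Lambda^sf)\,S(g)$, and this fails when $p_4=\infty$ or $p_3=\infty$.
\item In (ii) the case $p_3=\infty$ is not addressed. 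Then $\Lambda^s(T_gf)$ is again a BMO paraproduct, with $\Lambda^sf$ in the high slot.
\end{itemize}

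For (i), $\Lambda^s(T_gf)$ and $R(f,\Lambda^sg)$, the symbols are genuine Coifman--Meyer symbols in the variables $(\nabla f,\Lambda^{s-1}g)$, respectively $(\Lambda^sf,g)$. So quoting the endpoint Coifman--Meyer theorem, rather than the square-function argument, repairs them. Your decay argument for $\Lambda^sR(f,g)$ is fine at both endpoints once the supremum is put on whichever factor lies in $L^\infty$. Its symbol contains $|\xi+\eta|^s$, which is singular on the resonant set, so it could not have been quoted from Coifman--Meyer anyway.

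Contrary to your claim, $T_{\Lambda^sg}f$ is \emph{not} a Coifman--Meyer operator in $(\Lambda^sf,g)$. Its symbol contains $|\eta|^s$ times the cut-off of $S_{j-2}$, which for $s\notin 2\mathbb{N}$ is not smooth at $\eta=0$; $\eta$-derivatives of order $>s$ violate the symbol bounds. This piece must be done by hand.
\begin{itemize}
\item For $p_3<\infty$ your maximal-function argument works. It already uses $s>0$, because the kernel of $2^{-js}S_j\Lambda^s$ decays only like $|x|^{-d-s}$, so (iii) is not the only place that hypothesis enters.
\item For $p_3=\infty$, use $\|\dot{\Delta}_jf\|_{L^\infty}\lesssim 2^{-js}\|\Lambda^sf\|_{L^\infty}$ and write $2^{-js}S_{j-2}\Lambda^sg=\sum_{k\le j-3}2^{(k-j)s}P_kg$ with $P_k=2^{-ks}\dot{\Delta}_k\Lambda^s$. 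The geometric factor lets Young's inequality in $j$ move the square function onto $g$, giving the bound $\|\Lambda^sf\|_{L^\infty}\|g\|_{L^p}$.
\end{itemize}
With these additions your outline becomes a proof of the full stated range.
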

\begin{lemma}
Let $s>0$ and $\Lambda^{s}f\in L^{p_{1} }(\mathbb{R}^{d}), g \in L^{p_{2} }(\mathbb{R}^{d}), f\in L^{p_{3} }(\mathbb{R}^{d}), \Lambda^{s}g \in L^{p_{4} }(\mathbb{R}^{d})$.  Assume  that $1<p_{1},p_{2},p_{3},p_{4}\leq\infty$ and $1<p<\infty$. Then there holds
\be\label{fraleibnilaw}
\|\Lambda^{s}(fg) \|_{L^{p } }\leq
C\|\Lambda^{s}f\|_{L^{p_{1} } }
\|g\|_{L^{p_{2} } }
+C\|  f\|_{L^{p_{3} } }\|\Lambda^{s}g\|_{L^{p_{4} } },
\ee
where
\be\label{fraleibnilawcon}\f{1}{p}=\f{1}{p_{1}}+\f{1}{p_{2}}
=\f{1}{p_{3}}+\f{1}{p_{4}}.
\ee\end{lemma}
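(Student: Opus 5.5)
The fractional Leibniz rule \eqref{fraleibnilaw} will follow from the Kato--Ponce commutator estimate of Lemma \ref{odelemma} together with the $L^{p}$ bound for $\Lambda^{s}$ acting on a single factor. I will first prove it for Schwartz functions $f,g$ and then pass to general $f,g$ by density. The starting point is the splitting
\[
\Lambda^{s}(fg)=\big[\Lambda^{s}(fg)-f\Lambda^{s}g-g\Lambda^{s}f\big]+f\Lambda^{s}g+g\Lambda^{s}f .
\]
By H\"older's inequality with exponents \eqref{fraleibnilawcon}, the last two terms are bounded by $\|f\|_{L^{p_{3}}}\|\Lambda^{s}g\|_{L^{p_{4}}}+\|\Lambda^{s}f\|_{L^{p_{1}}}\|g\|_{L^{p_{2}}}$. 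This reduces the problem to bounding the symmetric commutator $\Lambda^{s}(fg)-f\Lambda^{s}g-g\Lambda^{s}f$ by the same right-hand side.

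Lemma \ref{odelemma} as stated is not directly suited to this, because it produces $\|\nabla f\|\,\|\Lambda^{s-1}g\|$. That term is not of the required form when $0<s<1$, and it mixes derivatives when $s\ge1$. I will therefore bound the symmetric commutator with a Coifman--Meyer paraproduct argument instead. Writing
\[
fg=T_{f}g+T_{g}f+R(f,g)
\]
with Littlewood--Paley blocks $\dot\Delta_{j}$ (as already used in the proof of \eqref{keyinequality}), I will treat three groups of terms.
\begin{itemize}
\item[(i)] $\Lambda^{s}T_{f}g-f\Lambda^{s}g$ splits as $[\Lambda^{s},T_f]g-T_{\Lambda^{s}g}f-R(f,\Lambda^{s}g)$. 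The last two pieces put all $s$ derivatives on $g$ at frequencies comparable to or above those of $f$, so they are bounded by $\|f\|_{L^{p_{3}}}\|\Lambda^{s}g\|_{L^{p_{4}}}$ by the Coifman--Meyer bilinear multiplier theorem.
\item[(ii)] The commutator $[\Lambda^{s},T_f]g=\sum_j[\Lambda^{s},S_{j-1}f]\dot\Delta_j g$ has a symbol of the form $|\xi+\eta|^{s}-|\eta|^{s}$ with $|\xi|\ll|\eta|$. This symbol can be written as $|\eta|^{s}m(\xi,\eta)$, where $m$ is a Coifman--Meyer symbol, so this piece is also bounded by $\|f\|_{L^{p_{3}}}\|\Lambda^{s}g\|_{L^{p_{4}}}$.
\item[(iii)] The terms with $f$ and $g$ interchanged are handled in the same way and give $\|\Lambda^{s}f\|_{L^{p_{1}}}\|g\|_{L^{p_{2}}}$.
\item[(iv)] $\Lambda^{s}R(f,g)$: since $s>0$, the output frequency is at most the comparable input frequencies. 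The derivatives can therefore be moved onto either factor, giving a bound by either product.
\end{itemize}

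The main obstacle is the endpoint exponents $p_{i}=\infty$, where Coifman--Meyer estimates can fail. The case $p_{2}=p_{3}=\infty$ is still allowed because the $L^\infty$ factor never carries derivatives. I would follow Grafakos--Oh and Li \cite{[Li]}: the factor in $L^\infty$ is always the low-frequency or undifferentiated one, so one uses square-function bounds $\|(\sum_j|\dot\Delta_j\Lambda^{s}g|^{2})^{1/2}\|_{L^{p}}\lesssim\|\Lambda^{s}g\|_{L^{p}}$ for $1<p<\infty$, combined with $\sup_j|S_jf|\lesssim Mf$. The condition $1<p<\infty$ in the statement is exactly what is needed here.

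Finally, density extends the estimate from Schwartz functions to the stated class. For $s$ the proof applies directly. The result is exactly \cite[Theorem 1]{[Li]}-type Kato--Ponce, so citing it would also suffice.
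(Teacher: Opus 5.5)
The paper does not prove this lemma. It quotes it from Li's Kato--Ponce paper as a companion of Lemma \ref{odelemma}, and you are right that Lemma \ref{odelemma} as stated does not directly yield it, because of the term $\|\nabla f\|_{L^{p_1}}\|\Lambda^{s-1}g\|_{L^{p_2}}$. Your paraproduct argument is therefore a different, self-contained route. For $1<p_1,\dots,p_4<\infty$ it is the standard Coifman--Meyer proof and works, modulo the remark on (iv) below.

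\textbf{The gap: the endpoints $p_i=\infty$.} The statement allows these, and you single them out as the main obstacle, but your resolution is wrong. Your square-function/maximal-function device only works when the $L^\infty$ function sits inside $S_{j-1}$. The claim that ``the factor in $L^\infty$ is always the low-frequency or undifferentiated one'' is false:
\begin{itemize}
\item For $p_1=\infty$ (resp.\ $p_4=\infty$), the piece $[\Lambda^s,T_g]f$ (resp.\ $[\Lambda^s,T_f]g$) has the bounded function $\Lambda^s f$ (resp.\ $\Lambda^s g$) in the high-frequency slot.
\item For $p_3=\infty$, which you call harmless, your reduction to the symmetric commutator creates $T_{\Lambda^s g}f$ and $R(f,\Lambda^s g)$. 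In $T_{\Lambda^s g}f$ the function $f$ is the high-frequency factor, contrary to your description in (i).
\item The same happens for $p_2=\infty$, with $T_{\Lambda^s f}g$ and $R(g,\Lambda^s f)$.
\end{itemize}
In each of these terms the bounded function carries $\dot\Delta_j$. Then $\bigl(\sum_j|\dot\Delta_j f|^2\bigr)^{1/2}$ is not controlled by $\|f\|_{L^\infty}$, and $\sup_j|S_jf|\lesssim Mf$ is of no use. So the argument as written closes in none of the endpoint cases.

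\textbf{The missing ingredient} is exactly the endpoint of the Coifman--Meyer theorem that you say ``can fail''. Coifman--Meyer operators, including paraproducts and their transposes, are bounded $L^\infty\times L^q\to L^q$ and $L^q\times L^\infty\to L^q$ for $1<q<\infty$. This follows from the $BMO$ paraproduct bound $\|\sum_j S_{j-1}a\,\dot\Delta_j b\|_{L^q}\lesssim\|a\|_{L^q}\|b\|_{BMO}$ (via Carleson measures or $H^1$--$BMO$ duality) together with $L^\infty\subset BMO$. Only $L^\infty\times L^\infty\to L^\infty$ fails, and $p<\infty$ excludes it. With this theorem invoked, (i)--(iii) close at all endpoints. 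It is also simpler to skip the symmetric commutator and bound $\Lambda^sT_fg$ and $\Lambda^sT_gf$ directly through the Coifman--Meyer symbol $|\xi+\eta|^s|\eta|^{-s}\chi(\xi,\eta)$; this produces fewer endpoint-sensitive terms.

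\textbf{On (iv).} You should make explicit why $s>0$ matters. Unless $s$ is an even integer, $|\xi+\eta|^s$ is not smooth on $\{\xi+\eta=0\}$, which meets the support $\{|\xi|\sim|\eta|\}$ of the resonant symbol, so Coifman--Meyer does not apply there. Instead:
\begin{itemize}
\item localize the output to $|\xi+\eta|\sim 2^k$ with $k\le j+2$ and gain the factor $2^{-(j-k)s}$;
\item sum the resulting geometric series, which converges because $s>0$;
\item use $\|\sum_k P_k h_k\|_{L^p}\lesssim\|(\sum_k|h_k|^2)^{1/2}\|_{L^p}$ for Littlewood--Paley projections $P_k$, putting the supremum on whichever factor lies in $L^\infty$.
\end{itemize}
Done this way, the resonant term is fine at every endpoint.

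\textbf{On density.} Schwartz functions are not dense in $L^\infty$, so ``by density'' needs a genuine limiting argument when some $p_i=\infty$. In the paper's applications only smooth, decaying $f,g$ occur, so this is harmless there.
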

\section{Time derivative estimates of solutions to the MHD
equations}
\label{estimate}
\setcounter{section}{3}\setcounter{equation}{0}

The purpose of this section is to deduce time derivative estimates of solutions to the MHD
equations. We begin with some energy estimate based on $u _{t}, B _{t}, D   u _{t}, D   B_{t}$ in the MHD
equations. Then, we present energy estimate of $u _{t}^{(r)}, B _{t}^{(r)}$, $D u _{t}^{(r)}, DB _{t}^{(r)}$ in this system, where $r$
is any   nonnegative integer. With these  inequality in hand, we can get the desired estimates  by induction.

\begin{lemma}Let the pair  $(u, B)$ satisfy   the  MHD  equations
\eqref{MHD}. Then, for any positive $t$  and $\eta$, there exists a positive constant $C$ such that
\be\label{3.2v1}
\f{d}{dt}(\| D  u (t)\|^{2}_{L^{2}}+\| D  B (t)\|^{2}_{L^{2}})
+ \f32\| D ^{2}u (t)\|_{L^{2}}^{2}+ \f32\| D ^{2}B (t)\|_{L^{2}}^{2}
\leq C(\| D  u (t)\|_{L^{2}}^{2}+\| D  B (t)\|_{L^{2}}^{2})^{3},
\ee
\be\ba\label{3.6v1}
&\f{d}{dt}(\| D  u (t)\|^{2}_{L^{2}}+\| D  B (t)\|^{2}_{L^{2}})
+ \f{1}{2}\| D ^{2}u (t)\|^{2}_{L^{2}}+\f{1}{2} \| u _{t}\|_{L^{2}}^{2}+ \f{1}{2} \|B_{t}\|^{2}_{L^{2}}+ \f{1}{2} \| D ^{2}B (t)\|^{2}_{L^{2}}\\
\leq&  C(\| D  u (t)\|_{L^{2}}^{2}+\| D  B (t)\|_{L^{2}}^{2})^{3},
\ea
\ee
\be\ba\label{3.13v2}
\f{d}{dt}(\| u _{t}\|^{2}_{L^{2}}+\|B _{t}\|^{2}_{L^{2}})+\f12\| D  u _{t}\|_{L^{2}}^{2}+&\f12\| D  B _{t}\|_{L^{2}}^{2}
\\\leq& C\|u _{t}\|^{\f{10}{3 }}_{L^{2}}+C\|B _{t}\|^{\f{10}{3 }}_{L^{2}}+C\| D  u \|^{  10  }_{L^{2}}  +C\| D  B \|^{  10  }_{L^{2}}\\&+4\eta(\| D  u \|_{L^{2}}^{2}+\| D  B \|_{L^{2}}^{2})^{2}
(\| D ^{2}u \|^{2}_{L^{2}}+\| D ^{2}B \|^{2}_{L^{2}}),\ea\ee
\be\ba\label{3.24v1}
&\f{1}{2}\f{d}{dt}(\| D   u _{t}\|^{2}_{L^{2}}+\| D   B _{t}\|^{2}_{L^{2}})+\f12\| D ^{2 }u _{t}\|_{L^{2}}^{2}+\f12\| D ^{2 }B _{t}\|_{L^{2}}^{2}+ \f12(\| u _{tt}\|_{L^{2}}^{2}+\| B _{tt}\|_{L^{2}}^{2})
\\
\leq& C(\| D   u _{t}\|^{2}_{L^{2}}+\| D   B _{t}\|^{2}_{L^{2}})^{\f{7}{5}}
+C(\| D  u (t)\|^{2}_{L^{2}}+\| D  B (t)\|^{2}_{L^{2}})^{7}
\\&+8\eta(\| D ^{2}u (t)\|_{L^{2}}^{2}+ \| D ^{2}B (t)\|_{L^{2}}^{2})(\| D  B \|_{L^{2}}^{2}+\| D  u\|_{L^{2}}^{2})^{4}.
 \ea
\ee
\end{lemma}
\begin{proof}
(1) Multiplying  the velocity field  and the magnetic field equations in the MHD equations \eqref{MHD}
  by $ D ^{2}u$ and $ D ^{2}B$ respectively, we infer that
 \be\label{uH1}
\f{1}{2}\f{d}{dt}\| D  u (t)\|^{2}_{L^{2}}
+ \| D ^{2}u (t)\|^{2}_{L^{2}}= -\int_{\mathbb{R}^{3}}
u (t)\cdot\nabla u (t)\cdot D ^{2}u (t)dx+\int_{\mathbb{R}^{3}}
B (t)\cdot\nabla B (t)\cdot D ^{2}u (t)dx, \ee
 \be\label{BH1}
\f{1}{2}\f{d}{dt}\| D  B (t)\|^{2}_{L^{2}}
+ \| D ^{2}B (t)\|^{2}_{L^{2}}= -\int_{\mathbb{R}^{3}}
u (t)\cdot\nabla B (t)\cdot D ^{2}B (t)dx+\int_{\mathbb{R}^{3}}
B (t)\cdot\nabla u (t)\cdot D ^{2}B (t)dx. \ee
 Summarizing \eqref{uH1} and \eqref{BH1}, we
directly obtain
 \be\label{H1}\ba
&\f{1}{2}\f{d}{dt}(\| D  u (t)\|^{2}_{L^{2}}+\| D  B (t)\|^{2}_{L^{2}})
+ \| D ^{2}u (t)\|^{2}_{L^{2}}+ \| D ^{2}B (t)\|^{2}_{L^{2}}
\\=& -\int_{\mathbb{R}^{3}}
u (t)\cdot\nabla u (t)\cdot D ^{2}u (t)dx+\int_{\mathbb{R}^{3}}
B (t)\cdot\nabla B (t)\cdot D ^{2}u (t)dx\\
 &-\int_{\mathbb{R}^{3}}
u (t)\cdot\nabla B (t)\cdot D ^{2}B (t)dx+\int_{\mathbb{R}^{3}}
B (t)\cdot\nabla u (t)\cdot D ^{2}B (t)dx.
\ea\ee
In view of  the H\"older inequality, one gets
$$\ba
\B|\int_{\mathbb{R}^{3}}u (t) \cdot\nabla u (t)\cdot D ^{2}u (t)dx\B|
&\leq\|u (t)\|_{L^{6}}\|\nabla u (t)\|
_{L^{3}}
\| D ^{2}u (t)\|_{L^{2}}.
\ea$$
 Thanks to the Sobolev embedding and the
Gagliardo-Nirenberg inequality \eqref{keyinequality}, we infer that
$$
\|u (t)\|_{L^{6} }\leq
C\| D  u (t)\|_{L^{2}},
$$
and
$$
\|\nabla u (t)\|_{L^{3} }\leq
C\| D  u (t)\|_{L^{2}}^{\f{1}{2}}
\| D ^{2}u (t)\|^{\f{1}{2}}_{L^{2}}.
$$
As a consequence, we obtain
$$\ba
\B|\int_{\mathbb{R}^{3}}u (t) \cdot\nabla u (t)\cdot D ^{2}u (t)dx\B|
 &\leq
C\| D  u (t)\|_{L^{2}}
\| D  u (t)\|_{L^{2}}^{\f{1}{2}}
\| D ^{2}u (t)\|^{\f{3}{2}}_{L^{2}}
\\
&\leq
\f{1}{32} \| D ^{2}u (t)\|_{L^{2}}^{2}
+C
\| D  u (t)\|_{L^{2}}^{6},\ea$$
 where we  used    Young's inequality.\\
Proceeding as above, we discover that
$$\ba
\B|\int_{\mathbb{R}^{3}}
B (t)\cdot\nabla B (t)\cdot D ^{2}u (t)dx \B| \leq&\|B (t)\|_{L^{6}}\|\nabla B (t)\|
_{L^{3}}
\| D ^{2}u (t)\|_{L^{2}}\\
\leq&\| D  B (t)\|_{L^{2}}^{\f{3}{2}}
\| D ^{2}B (t)\|^{\f{1}{2}}_{L^{2}}\| D ^{2}u (t)\|_{L^{2}}\\
\leq&
\f{1}{32} (\| D ^{2}u (t)\|_{L^{2}}^{2}+\| D ^{2}B (t)\|_{L^{2}}^{2})
+C
\| D  B (t)\|_{L^{2}}^{6},
\ea$$
and
$$\ba
&\B|-\int_{\mathbb{R}^{3}}
u (t)\cdot\nabla B (t)\cdot D ^{2}B (t)dx+\int
B (t)\cdot\nabla u (t)\cdot D ^{2}B (t)dx\B|\\
\leq&
\f{1}{32} (\| D ^{2}u (t)\|_{L^{2}}^{2}+\| D ^{2}B (t)\|_{L^{2}}^{2})
+C
(\| D  B (t)\|_{L^{2}}^{6}+\| D  u (t)\|_{L^{2}}^{6}).
\ea$$
Substituting  this   into estimate
\eqref{H1}, we observe that
\be\label{3.2}
\f12\f{d}{dt}(\| D  u (t)\|^{2}_{L^{2}}+\| D  B (t)\|^{2}_{L^{2}})
+ \f34\| D ^{2}u (t)\|_{L^{2}}^{2}+ \f34\| D ^{2}B (t)\|_{L^{2}}^{2}
\leq C(\| D  u (t)\|_{L^{2}}^{2}+\| D  B (t)\|_{L^{2}}^{2})^{3}.
\ee

(2) Testing the MHD equations
$\eqref{MHD}_{1}$  with $u _{t}$ and $\eqref{MHD}_{2}$  with $B _{t}$, we
  integrate
  in the space variable and sum to obtain
\be\label{grawall}\ba
&\f12\f{d}{dt}(\| D  u (t)\|^{2}_{L^{2}}+\| D  B (t)\|^{2}_{L^{2}})
+ \| u _{t}\|_{L^{2}}^{2}+ \| B _{t}\|_{L^{2}}^{2}
\\=&-\int_{\mathbb{R}^{3}}u\cdot\nabla u\cdot u _{t}dx+\int_{\mathbb{R}^{3}} B\cdot\nabla B\cdot u _{t}dx -\int_{\mathbb{R}^{3}}u\cdot\nabla B\cdot B _{t}dx+\int_{\mathbb{R}^{3}} B\cdot\nabla u\cdot B _{t}dx.
\ea\ee
With the help of the H\"older inequality, Sobolev inequality   and interpolation inequality \eqref{keyinequality}, we know that
$$\ba
 \B|\int_{\mathbb{R}^{3}}u\cdot\nabla u\cdot u _{t}dx\B|\leq&\|u _{t}\|_{L^{2}}\|u  \|_{L^{6}}\|\nabla u \|_{L^{3}}\\\leq& C \|u _{t}\|_{L^{2}} \| D  u  \|_{L^{2}} \| D  u (t)\|_{L^{2}}^{\f{1}{2}}
\| D ^{2}u (t)\|^{\f{1}{2}}_{L^{2}}
\\\leq&   \varepsilon\|u _{t}\|_{L^{2}}^{2} +  \| D  u (t)\|_{L^{2}}^{\f{3 }{ 2}}
\| D ^{2}u (t)\| _{L^{2}}\\\leq&    \varepsilon\|u _{t}\|_{L^{2}}^{2} + C\| D  u (t)\|_{L^{2}}^{6}+
\delta\| D ^{2}u (t)\|^{2} _{L^{2}},\ea
$$
where the Young inequality was used and $\varepsilon, \delta$ will be determined later.

Along the exact same lines as above, we have
$$
 \B|\int_{\mathbb{R}^{3}}B\cdot\nabla B\cdot u _{t}dx \B|\leq  \varepsilon\|u _{t}\|_{L^{2}}^{2} + C\| D  B (t)\|_{L^{2}}^{6}+
\delta\| D ^{2}B (t)\|^{2} _{L^{2}},
$$
and
$$\ba
& \B|-\int_{\mathbb{R}^{3}}u\cdot\nabla B\cdot B _{t}dx+\int_{\mathbb{R}^{3}} B\cdot\nabla u\cdot B _{t}dx \B|\\
\leq&  \varepsilon\|B _{t}\|_{L^{2}}^{2} + C(\| D  B (t)\|_{L^{2}}^{6}+\| D  u (t)\|_{L^{2}}^{6})+
 \delta(\| D ^{2}B (t)\|^{2} _{L^{2}}+\| D ^{2}B (t)\|^{2} _{L^{2}}).
\ea$$
Plugging these estimates into \eqref{grawall}, we arrive at
\be\ba
&\f12\f{d}{dt}(\| D  u (t)\|^{2}_{L^{2}}+\| D  B (t)\|^{2}_{L^{2}})
+ \| u _{t}\|_{L^{2}}^{2}+ \| B _{t}\|_{L^{2}}^{2}\\
\leq&2\varepsilon(\|u _{t}\|_{L^{2}}^{2} +\|B _{t}\|_{L^{2}}^{2} ) + C(\| D  u (t)\|_{L^{2}}^{6}+\| D  B (t)\|_{L^{2}}^{6}) +2\delta(
 \| D ^{2}u (t)\|^{2} _{L^{2}}+
  \| D ^{2}B (t)\|^{2} _{L^{2}}),
\ea\ee
which together with \eqref{3.2} implies that
\be\ba
&\f{d}{dt}(\| D  u (t)\|^{2}_{L^{2}}+\| D  B (t)\|^{2}_{L^{2}})+ \f34 \| D ^{2}u (t)\|^{2}_{L^{2}}
+  \f34\| D ^{2}B (t)\|^{2}_{L^{2}}+ \| u _{t}\|_{L^{2}}^{2}+ \| B_{t}\|_{L^{2}}^{2}\\
\leq &2\varepsilon(\|u _{t}\|_{L^{2}}^{2} +\|B _{t}\|_{L^{2}}^{2} ) + C(\| D  u (t)\|_{L^{2}}^{2}+\| D  B (t)\|_{L^{2}}^{2})^{3}+2\delta(
 \| D ^{2}u (t)\|^{2} _{L^{2}}+
  \| D ^{2}B (t)\|^{2} _{L^{2}}).
\ea
\ee
Taking $\varepsilon$ and $\delta$ small, we get
\be\ba\label{3.6}
&\f{d}{dt}(\| D  u (t)\|^{2}_{L^{2}}+\| D  B (t)\|^{2}_{L^{2}})
+ \f{1}{2}\| D ^{2}u (t)\|^{2}_{L^{2}}+\f{1}{2} \| u _{t}\|_{L^{2}}^{2}+ \f{1}{2} \|B_{t}\|^{2}_{L^{2}}+ \f{1}{2} \| D ^{2}B (t)\|^{2}_{L^{2}}\\
\leq&  C(\| D  u (t)\|_{L^{2}}^{2}+\| D  B (t)\|_{L^{2}}^{2})^{3}.
\ea
\ee

(3)
Differentiating  the MHD equations
\eqref{MHD} with respect to the time variable, we compute
\be\ba\label{3.7} u^{(2) }_{t}-\Delta u_{t}+u_{t}\cdot\nabla u+u\cdot\nabla u_{t}-B_{t}\cdot\nabla B-B\cdot\nabla B_{t}+\nabla \Pi_{t}=0, \\
B^{(2) }_{t}-\Delta B_{t}+u_{t}\cdot\nabla B+u\cdot\nabla B_{t}-B_{t}\cdot\nabla u-B\cdot\nabla u_{t}=0.
\ea
\ee
Taking the inner product of   $\eqref{3.7}_{1}$ with $u_{t}$  and $\eqref{3.7}_{2}$ with $B_{t}$,  integrating over the spatial variable,
one deduces  that
$$\ba
&\f{1}{2}\f{d}{dt}\| u _{t}\|^{2}_{L^{2}}+\| D  u _{t}\|_{L^{2}}^{2}=-\int_{\mathbb{R}^{3}}u_{t}\cdot\nabla u\cdot u_{t}dx+\int_{\mathbb{R}^{3}}B_{t}\cdot\nabla B\cdot u_{t}dx+\int_{\mathbb{R}^{3}}B\cdot\nabla B_{t}\cdot u_{t}dx,\\
&\f{1}{2}\f{d}{dt}\| B _{t}\|^{2}_{L^{2}}+\| D  B _{t}\|_{L^{2}}^{2}=-\int_{\mathbb{R}^{3}}u_{t}\cdot\nabla B\cdot B_{t}dx+\int_{\mathbb{R}^{3}}B_{t}\cdot\nabla u\cdot B_{t}dx+\int_{\mathbb{R}^{3}}B\cdot\nabla u_{t}\cdot B_{t}dx.
\ea
$$
By means of   of integration by parts, we get
$$\int_{\mathbb{R}^{3}}B\cdot\nabla B_{t}\cdot u_{t}dx=-\int_{\mathbb{R}^{3}} B\cdot\nabla u_{t}\cdot B_{t}dx.$$
As a consequence, we get
\be\ba\label{3.8}
&\f{1}{2}\f{d}{dt}(\| u _{t}\|^{2}_{L^{2}}+\| B _{t}\|^{2}_{L^{2}})+\| D  u _{t}\|_{L^{2}}^{2}+\| D  B _{t}\|_{L^{2}}^{2}\\=&-\int_{\mathbb{R}^{3}}u_{t}\cdot\nabla u\cdot u_{t}dx+\int_{\mathbb{R}^{3}}B_{t}\cdot\nabla B\cdot u_{t}dx-\int_{\mathbb{R}^{3}}u_{t}\cdot\nabla B\cdot B_{t}dx+\int_{\mathbb{R}^{3}}B_{t}\cdot\nabla u\cdot B_{t}dx.
\ea
\ee
In view of the H\"older inequality, Sobolev embedding  and interpolation inequality \eqref{keyinequality}, one derives from the Young  inequality that
$$\ba
\B|-\int_{\mathbb{R}^{3}}u_{t}\cdot\nabla B\cdot u_{t}dx\B|\leq&
\|u _{t}\|_{L^{2}}\|u_{t}  \|_{L^{6}}\|\nabla B \|_{L^{3}}\\
\leq& C\|u _{t}\|_{L^{2}}\|  D  u_{t}  \|_{L^{2}}\| D  B \|_{L^{2}}^{\f{1}{2}}
\| D ^{2}B \|^{\f{1}{2}}_{L^{2}}\\
 \leq& \varepsilon\| D  u_{t}  \|_{L^{2}}^{2}+C\|u _{t}\|^{2}_{L^{2}} \| D  B \|_{L^{2}}
\| D ^{2}B \| _{L^{2}},
\ea
$$
where $\varepsilon$
will be fixed later.\\
Thanks to Young inequality, we observe that
$$\ba
\|u _{t}\|^{2}_{L^{2}} \| D  B \|_{L^{2}}
\| D ^{2}B \| _{L^{2}}
 \leq C\|u _{t}\|^{\f{10}{3 }}_{L^{2}}+C\| D  B \|^{  10  }_{L^{2}}+\eta\| D  B \|_{L^{2}}^{4}
\| D ^{2}B \|^{2}_{L^{2}},
\ea$$
which turns out that
\be\ba\label{3.11}
\B|-\int_{\mathbb{R}^{3}}u_{t}\cdot\nabla B\cdot u_{t}dx\B|\leq&
   C\|u _{t}\|^{\f{10}{3 }}_{L^{2}}+C\| D  B \|^{  10  }_{L^{2}}+\eta\| D  B \|_{L^{2}}^{4}
\| D ^{2}B \|^{2}_{L^{2}}+ \varepsilon\| D  u_{t}  \|_{L^{2}}^{2}.
\ea\ee
Following the same path, we arrive at
\be\ba\label{3.12}
\B|-\int_{\mathbb{R}^{3}}u_{t}\cdot\nabla u\cdot u_{t}dx\B|\leq&C\|u _{t}\|^{\f{10}{3 }}_{L^{2}}+C\| D  u \|^{  10  }_{L^{2}}+\eta\| D  u \|_{L^{2}}^{4}
\| D ^{2}u \|^{2}_{L^{2}}+\varepsilon\| D  u_{t}  \|_{L^{2}}^{2},\\
\B|\int_{\mathbb{R}^{3}}B_{t}\cdot\nabla B\cdot u_{t}dx\B|\leq&C\|B _{t}\|^{\f{10}{3 }}_{L^{2}}+C\| D  B \|^{  10  }_{L^{2}}+\eta\| D  B \|_{L^{2}}^{4}
\| D ^{2}B \|^{2}_{L^{2}}+\varepsilon\| D  u_{t}  \|_{L^{2}}^{2},\\
\B|\int_{\mathbb{R}^{3}}B_{t}\cdot\nabla u\cdot B_{t}dx\B|\leq&C\|B _{t}\|^{\f{10}{3 }}_{L^{2}}+C\| D  u \|^{  10  }_{L^{2}}+\eta\| D  u \|_{L^{2}}^{4}
\| D ^{2}u \|^{2}_{L^{2}}+\varepsilon\| D  B_{t}  \|_{L^{2}}^{2}.
\ea\ee
Plugging \eqref{3.11}-\eqref{3.12} into \eqref{3.8}, we discover that
$$\ba
&\f{1}{2}\f{d}{dt}(\| u _{t}\|^{2}_{L^{2}}+\|B _{t}\|^{2}_{L^{2}})+\| D  u _{t}\|_{L^{2}}^{2}+\| D  B _{t}\|_{L^{2}}^{2}
\\\leq&C\|u _{t}\|^{\f{10}{3 }}_{L^{2}}+C\|B _{t}\|^{\f{10}{3 }}_{L^{2}}+C\| D  u \|^{  10  }_{L^{2}} +C\| D  B \|^{  10  }_{L^{2}}\\&+\eta\| D  u \|_{L^{2}}^{4}
\| D ^{2}u \|^{2}_{L^{2}}+\eta\| D  B \|_{L^{2}}^{4}
\| D ^{2}B\|^{2}_{L^{2}}+\varepsilon\| D  u_{t}  \|_{L^{2}}^{2}+\varepsilon\| D  B_{t}  \|_{L^{2}}^{2},
\ea
$$
Taking $\varepsilon$ sufficiently small, we infer that
\be\ba\label{3.10v1}
& \f{d}{dt}(\| u _{t}\|^{2}_{L^{2}}+\|B _{t}\|^{2}_{L^{2}})+\f12\| D  u _{t}\|_{L^{2}}^{2}+\f12\| D  B _{t}\|_{L^{2}}^{2}
\\\leq&C\|u _{t}\|^{\f{10}{3 }}_{L^{2}}+C\|B _{t}\|^{\f{10}{3 }}_{L^{2}}+C\| D  u \|^{  10  }_{L^{2}} +C\| D  B \|^{  10  }_{L^{2}}\\&+4\eta(\| D  u \|_{L^{2}}^{2}+\| D  B \|_{L^{2}}^{2})^{2}
(\| D ^{2}u \|^{2}_{L^{2}}+\| D ^{2}B \|^{2}_{L^{2}}),
\ea
\ee
where the Young inequality was used.

(4)
Multiplying the velocity field and the  magnetic field equations in \eqref{3.7} with $ D ^{2 }u_{t}$ and $ D ^{2 }u_{t}$, respectively, we integrate
over $\mathbb{R}^{3}$ and sum to get
\be\ba\label{3.18}
&\f{1}{2}\f{d}{dt}(\| D   u _{t}\|^{2}_{L^{2}}+\| D   B _{t}\|^{2}_{L^{2}})+\| D ^{2 }u _{t}\|_{L^{2}}^{2}+\| D ^{2 }B _{t}\|_{L^{2}}^{2}\\=&-\int_{\mathbb{R}^{3}} u_{t}\cdot\nabla u\cdot  D ^{2 }u_{t}dx-\int_{\mathbb{R}^{3}}u\cdot\nabla u_{t}\cdot D ^{2 }u_{t}dx\\&+\int_{\mathbb{R}^{3}} B_{t}\cdot\nabla B\cdot  D ^{2 }u_{t}dx+\int_{\mathbb{R}^{3}} B\cdot\nabla B_{t}\cdot  D ^{2 }u_{t}dx\\
=&-\int_{\mathbb{R}^{3}} u_{t}\cdot\nabla B\cdot  D ^{2 }B_{t}dx-\int_{\mathbb{R}^{3}}u\cdot\nabla B_{t}\cdot D ^{2 }B_{t}dx\\&+\int_{\mathbb{R}^{3}} B_{t}\cdot\nabla u\cdot  D ^{2 }B_{t}dx+\int_{\mathbb{R}^{3}} B\cdot\nabla u_{t}\cdot  D ^{2 }B_{t}dx.
\ea
\ee
In the light   of the H\"older inequality and interpolation inequality \eqref{keyinequality}, one has
\be\ba\label{3.17}
& \B|\int_{\mathbb{R}^{3}}u_{t}\cdot\nabla u+u\cdot\nabla u_{t},  D ^{2 }u_{t} dx\B|\\
\leq& \|u_{t}\|_{L^{6}} \|\nabla u \|_{L^{3}} \| D ^{2 }u_{t}\|_{L^{2}}+\|u\|_{L^{6}} \|\nabla u_{t} \|_{L^{3}}\| D ^{2 }u_{t}\|_{L^{2}}\\
\leq& C\| D  u_{t}\|_{L^{2}} \| D  u \|_{L^{2}}^{\f{1}{ 2}}
\| D ^{2}u \|^{\f{1}{ 2}}_{L^{2}} \| D ^{2 }u_{t}\|_{L^{2}}\\&+C\| D  u\|_{L^{2}} \| D  u_{t} \|_{L^{2}}^{\f{1}{ 2}}
\| D ^{2}u_{t} \|^{\f{1}{ 2}}_{L^{2}} \| D ^{2 }u_{t}\|_{L^{2}}.
\ea
\ee
By means  of the Young inequality, we remark  that
\be\ba\label{3.15}
&\| D  u_{t}\|_{L^{2}} \| D  u \|_{L^{2}}^{\f{1}{ 2}}
\| D ^{2}u \|^{\f{1}{ 2}}_{L^{2}} \| D ^{2 }u_{t}\|_{L^{2}}\\
\leq&  C\| D  u_{t}\|_{L^{2}}^{\f{14}{5}}
+C\| D  u \|_{L^{2}}^{14}
+\eta\| D  u \|^{8}_{L^{2}} \| D ^{2 }u \|_{L^{2}}^{2}+\varepsilon\| D ^{2 }u_{t}\|_{L^{2}}^{2}\ea
\ee
and
\be\ba
&\| D  u\|_{L^{2}} \| D  u_{t} \|_{L^{2}}^{\f{1}{ 2}}
\| D ^{2}u_{t} \|^{\f{1}{ 2}}_{L^{2}} \| D ^{2 }u_{t}\|_{L^{2}}\\
\leq&C\| D  u_{t} \|_{L^{2}}^{2}\| D  u\|_{L^{2}}^{ 4  } +\varepsilon\| D ^{2 }u_{t}\|_{L^{2}}^{2}
\\
\leq & C\| D  u_{t}\|_{L^{2}}^{\f{14}{5}}
+C\| D  u \|_{L^{2}}^{14}+\varepsilon\| D ^{2 }u_{t}\|_{L^{2}}^{2},
\ea
\ee
where $\varepsilon$ and $\eta$ are to be determined later.

A slight modification of the above derivation implies that
\be\ba\label{3.16}
&\B|\int_{\mathbb{R}^{3}} B_{t}\cdot\nabla B\cdot  D ^{2 }u_{t}dx\B|\leq   C\| D  B_{t}\|_{L^{2}}^{\f{14}{5}}
+C\| D  B \|_{L^{2}}^{14}
+\eta\| D  B \|^{8}_{L^{2}} \| D ^{2 }B \|_{L^{2}}^{2}+\varepsilon\| D ^{2 }u_{t}
\|_{L^{2}}^{2},\\
&\B|\int_{\mathbb{R}^{3}} u_{t}\cdot\nabla B\cdot  D ^{2 }B_{t}dx\B|
\leq   C\| D  u_{t}\|_{L^{2}}^{\f{14}{5}}
+C\| D  B \|_{L^{2}}^{14}
+\eta\| D  B \|^{8}_{L^{2}} \| D ^{2 }B \|_{L^{2}}^{2}+\varepsilon\| D ^{2 }B_{t}\|_{L^{2}}^{2},\\
&
\B|\int_{\mathbb{R}^{3}} B_{t}\cdot\nabla u\cdot  D ^{2 }B_{t}dx\B|\leq C\| D  B_{t}\|_{L^{2}}^{\f{14}{5}}
+C\| D  u \|_{L^{2}}^{14}
+\eta\| D  u \|^{8}_{L^{2}} \| D ^{2 }u \|_{L^{2}}^{2}+\varepsilon\| D ^{2 }
B_{t}\|_{L^{2}}^{2},\\
&
\B|\int_{\mathbb{R}^{3}} B\cdot\nabla B_{t}\cdot  D ^{2 }u_{t}dx\B|
\leq  C\| D  B_{t}\|_{L^{2}}^{\f{14}{5}}
+C\| D  B \|_{L^{2}}^{14}+\varepsilon(\| D ^{2 }u_{t}\|_{L^{2}}^{2}+\| D ^{2 }B_{t}\|_{L^{2}}^{2}),\\
&\B|-\int_{\mathbb{R}^{3}}u\cdot\nabla B_{t}\cdot  D ^{2 }B_{t}dx\B|
\leq   C\| D  B_{t}\|_{L^{2}}^{\f{14}{5}}
+C\| D  u \|_{L^{2}}^{14}+\varepsilon \| D ^{2 }B_{t}\|_{L^{2}}^{2}, \\
&\B|\int_{\mathbb{R}^{3}} B\cdot\nabla u_{t}\cdot  D ^{2 }B_{t}dx\B|\leq  C\| D  u_{t}\|_{L^{2}}^{\f{14}{5}}
+C\| D  B \|_{L^{2}}^{14}+\varepsilon(\| D ^{2 }u_{t}\|_{L^{2}}^{2}+\| D ^{2 }B_{t}\|_{L^{2}}^{2}).
\ea\ee
Inserting \eqref{3.15}-\eqref{3.16} into \eqref{3.18}  and taking $\varepsilon$ sufficiently small, we remark that
\be\ba\label{3.20}
&\f{1}{2}\f{d}{dt}(\| D   u _{t}\|^{2}_{L^{2}}+\| D   B _{t}\|^{2}_{L^{2}})+\f12\| D ^{2 }u _{t}\|_{L^{2}}^{2}+\f12\| D ^{2 }B _{t}\|_{L^{2}}^{2}
\\
\leq& C(\| D   u _{t}\|^{2}_{L^{2}}+\| D   B _{t}\|^{2}_{L^{2}})^{\f{7}{5}}
+C(\| D  u (t)\|^{2}_{L^{2}}+\| D  B (t)\|^{2}_{L^{2}})^{7}
\\&+4\eta(\| D ^{2}u (t)\|_{L^{2}}^{2}+ \| D ^{2}B (t)\|_{L^{2}}^{2})(\| D  B \|_{L^{2}}^{2}+\| D  u\|_{L^{2}}^{2})^{4}.
\ea
\ee
We
multiply  the velocity field and the  magnetic field equations in  \eqref{3.7} of
by $u_{tt}$ and $B_{tt}$, respectively, integrate over the spatial variable and sum to obtain
\be\ba\label{3.28}
&\f12\f{d}{dt}(\| D  u _{t}\|^{2}_{L^{2}}+\| D  B _{t}\|^{2}_{L^{2}})
+ \| u _{tt}\|_{L^{2}}^{2}+ \| B _{tt}\|_{L^{2}}^{2}
\\=&-\int_{\mathbb{R}^{3}}u_{t}\cdot\nabla u\cdot u_{tt}dx-\int_{\mathbb{R}^{3}}u\cdot\nabla u_{t}\cdot u_{tt}dx+\int_{\mathbb{R}^{3}}B_{t}\cdot\nabla B\cdot u_{tt}dx+\int_{\mathbb{R}^{3}}B\cdot\nabla B_{t} \cdot u_{tt}dx.\\
& -\int_{\mathbb{R}^{3}}u_{t}\cdot\nabla B\cdot B_{tt}dx-\int_{\mathbb{R}^{3}} u\cdot\nabla B_{t}\cdot B_{tt}dx+\int_{\mathbb{R}^{3}}B_{t}\cdot\nabla u\cdot B_{tt}dx+\int_{\mathbb{R}^{3}}B\cdot\nabla u_{t} \cdot B_{tt}dx.
\ea\ee
Along the line of  \eqref{3.17}, one has
$$\ba
&\B|\int_{\mathbb{R}^{3}}(u_{t}\cdot\nabla u+u\cdot\nabla u_{t}) \cdot u_{tt}dx\B|
\\\leq& \|u_{t}\|_{L^{6}} \|\nabla u \|_{L^{3}} \|u_{tt}\|_{L^{2}}+\|u\|_{L^{6}} \|\nabla u_{t} \|_{L^{3}}\|u_{tt}\|_{L^{2}}\\
\leq& C\| D  u_{t}\|_{L^{2}} \| D  u \|_{L^{2}}^{\f{1}{ 2}}
\| D ^{2}u \|^{\f{1}{ 2}}_{L^{2}} \|u_{tt}\|_{L^{2}}+C\| D  u\|_{L^{2}} \| D  u_{t} \|_{L^{2}}^{\f{1}{ 2}}
\| D ^{2}u_{t} \|^{\f{1}{ 2}}_{L^{2}} \|u_{tt}\|_{L^{2}}.\ea
$$
 Replacing $\| D ^{2 }u_{t}\|_{L^{2}}$ by $u_{tt}$ in \eqref{3.15} and \eqref{3.16}, we find
$$\ba
&\| D  u_{t}\|_{L^{2}} \| D  u \|_{L^{2}}^{\f{1}{ 2}}
\| D ^{2}u \|^{\f{1}{ 2}}_{L^{2}} \|  u_{tt}\|_{L^{2}}\\
\leq&  C\| D  u_{t}\|_{L^{2}}^{\f{14}{5}}
+C\| D  u \|_{L^{2}}^{14}
+\eta\| D  u \|^{8}_{L^{2}} \| D ^{2 }u \|_{L^{2}}^{2}+\varepsilon\| u_{tt}\|_{L^{2}}^{2},\ea
$$
and
$$\ba
&\| D  u\|_{L^{2}} \| D  u_{t} \|_{L^{2}}^{\f{1}{ 2}}
\| D ^{2}u_{t} \|^{\f{1}{ 2}}_{L^{2}} \|  u_{tt}\|_{L^{2}}\\
 \leq & C\| D  u_{t}\|_{L^{2}}^{\f{14}{5}}
+C\| D  u \|_{L^{2}}^{14}+\varepsilon\| u_{tt}\|_{L^{2}}^{2}+\varepsilon\| D ^{2}u_{t}\|_{L^{2}}^{2},
\ea
$$
which turns out that
$$\ba
&\B|\int_{\mathbb{R}^{3}}u_{t}\cdot\nabla u+u\cdot\nabla u_{t}, u_{tt}dx\B|
\\\leq&  C\| D  u_{t}\|_{L^{2}}^{\f{14}{5}}
+C\| D  u \|_{L^{2}}^{14}
+\eta\| D  u \|^{8}_{L^{2}} \| D ^{2 }u \|_{L^{2}}^{2}+2\varepsilon\| u_{tt}\|_{L^{2}}^{2}.
\ea
$$
Here, $\eta$ and $\varepsilon $  will be chosen later.\\
Likewise,
$$\ba
&\B|\int_{\mathbb{R}^{3}}B_{t}\cdot\nabla B\cdot u_{tt}dx+\int_{\mathbb{R}^{3}}B\cdot\nabla B_{t} \cdot u_{tt}dx\B|\\
\leq&  C\| D  B_{t}\|_{L^{2}}^{\f{14}{5}}
+C\| D  B \|_{L^{2}}^{14}
+\eta\| D  B \|^{8}_{L^{2}} \| D ^{2 }B \|_{L^{2}}^{2}+\varepsilon\| u_{tt}\|_{L^{2}}^{2}\\&+
C\| D  B_{t}\|_{L^{2}}^{\f{14}{5}}
+C\| D  B \|_{L^{2}}^{14}+\varepsilon\| u_{tt}\|_{L^{2}}^{2}+\varepsilon\| D ^{2}B_{t}\|_{L^{2}}^{2},\ea
$$
$$\ba
&\B|-\int_{\mathbb{R}^{3}}u_{t}\cdot\nabla B\cdot B_{tt}dx-\int_{\mathbb{R}^{3}}u\cdot\nabla B_{t}\cdot B_{tt}dx\B|\\
\leq&  C\| D  u_{t}\|_{L^{2}}^{\f{14}{5}}
+C\| D  B \|_{L^{2}}^{14}
+\eta\| D  B \|^{8}_{L^{2}} \| D ^{2 }B \|_{L^{2}}^{2}+\varepsilon\| B_{tt}\|_{L^{2}}^{2}\\ &+
C\| D  B_{t}\|_{L^{2}}^{\f{14}{5}}
+C\| D  u \|_{L^{2}}^{14}+\varepsilon\| B_{tt}\|_{L^{2}}^{2}+\varepsilon\| D ^{2}B_{t}\|_{L^{2}}^{2},
\ea$$
and
$$\ba
&\B|\int_{\mathbb{R}^{3}}B_{t}\cdot\nabla u\cdot B_{tt}dx+\int_{\mathbb{R}^{3}}B\cdot\nabla u_{t} \cdot B_{tt}dx\B|\\
\leq&  C\| D  u_{t}\|_{L^{2}}^{\f{14}{5}}
+C\| D  u \|_{L^{2}}^{14}
+\eta\| D  u \|^{8}_{L^{2}} \| D ^{2 }u \|_{L^{2}}^{2}+\varepsilon\| B_{tt}\|_{L^{2}}^{2}\\ &+
C\| D  u_{t}\|_{L^{2}}^{\f{14}{5}}
+C\| D  B \|_{L^{2}}^{14}+\varepsilon\| B_{tt}\|_{L^{2}}^{2}+\varepsilon\| D ^{2}u_{t}\|_{L^{2}}^{2}.
\ea$$
Plugging  these inequalities into \eqref{3.28}, by choosing $\varepsilon$ sufficiently small, we notice  that
$$\ba\label{3.21v1}
&\f12\f{d}{dt}(\| D  u _{t}\|^{2}_{L^{2}}+\| D  B _{t}\|^{2}_{L^{2}})
+ \f12(\| u _{tt}\|_{L^{2}}^{2}+\| B _{tt}\|_{L^{2}}^{2})\\
\leq&  C(\| D   u _{t}\|^{2}_{L^{2}}+\| D   B _{t}\|^{2}_{L^{2}})^{\f{7}{5}}
+C(\| D  u (t)\|^{2}_{L^{2}}+\| D  B (t)\|^{2}_{L^{2}})^{7}
\\&+
6\eta(\| D ^{2}u (t)\|_{L^{2}}^{2}+ \| D ^{2}B (t)\|_{L^{2}}^{2})(\| D  B \|_{L^{2}}^{2}+\| D  u\|_{L^{2}}^{2})^{4},
\ea
$$
which together with
\eqref{3.20} leads to
\be\ba
& \f{d}{dt}(\| D   u _{t}\|^{2}_{L^{2}}+\| D   B _{t}\|^{2}_{L^{2}})+\f12\| D ^{2 }u _{t}\|_{L^{2}}^{2}+\f12\| D ^{2 }B _{t}\|_{L^{2}}^{2}+ \f12(\| u _{tt}\|_{L^{2}}^{2}+\| B _{tt}\|_{L^{2}}^{2})
\\
\leq& C(\| D   u _{t}\|^{2}_{L^{2}}+\| D   B _{t}\|^{2}_{L^{2}})^{\f{7}{5}}
+C(\| D  u (t)\|^{2}_{L^{2}}+\| D  B (t)\|^{2}_{L^{2}})^{7}
\\&+10\eta(\| D ^{2}u (t)\|_{L^{2}}^{2}+ \| D ^{2}B (t)\|_{L^{2}}^{2})(\| D  B \|_{L^{2}}^{2}+\| D  u\|_{L^{2}}^{2})^{4}.
\ea
\ee
This completes the proof of this lemma.
\end{proof}

We get $H^{s}$ estimate of solutions by zhijiedi, We could get $H^{s} $ and $H^{s+1}\in L^{2}L^{2}$  However,

$u _{t}^{(r)}$ $\nabla  B _{t}^{(r)}$ rather than the desiral  $u _{t}^{(r+1)}$.

$u _{t}^{(r+1)}$ require the addition energy inequality $\nabla  u _{t}^{(r)}$

Unlike the i space time
\begin{lemma}\label{lema3.2}Let the pair  $(u, B)$ satisfy   the  MHD  equations
\eqref{MHD}. Then for any positive $t$ and $\eta$, there exists a positive constant $C$ such that
\be\ba\label{3.31v1}
&\f{d}{dt}(\| u _{t}^{(r)}\|^{2}_{L^{2}}+ \| B _{t}^{(r)}\|^{2}_{L^{2}})+\f{1}{2}\| \nabla  u _{t}^{(r)}\|_{L^{2}}^{2}+\f{1}{2}\|  \nabla  B _{t}^{(r)}\|_{L^{2}}^{2} \\
\leq&C\sum_{j>0}(\|u_{t}^{(j)}\|_{L^{2}}^{2}+\|B_{t}^{(j)}\|_{L^{2}}^{2})
^{\f{ (4r +1)}{4j -1}}
+C\sum_{j>0}(\| D  u_{t}^{(r-j)} \|_{L^{2}}^{2}+\| D  u_{t}^{(r-j)} \|_{L^{2}}^{2})^{\f{(4r +1)}{4(r-j)+1}}
\\&+8\eta\sum_{j>0}(\| D  u_{t}^{(r-j)} \|_{L^{2}}+\| D  B_{t}^{(r-j)} \|_{L^{2}})^{\f{2 (2j-1)}{  4(r-j)+1}}
(\| D ^{2}u_{t}^{(r-j)} \|^{2}_{L^{2}}+\| D ^{2}B_{t}^{(r-j)} \|^{2}_{L^{2}}),
\ea
\ee
and
\be\ba\label{3.38v1}
&\f{d}{dt}(\| \nabla  u _{t}^{(r)}\|^{2}_{L^{2}}+\|   \nabla   B _{t}^{(r)}\|^{2}_{L^{2}})+\f{1}{2}\| D \nabla u _{t}^{(r)}\|_{L^{2}}^{2}+\f{1}{2}\| D  \nabla B _{t}^{(r)}\|_{L^{2}}^{2} \\&+\f{1}{2}\| u _{t}^{(r+1)}\|_{L^{2}}^{2}+\f{1}{2}\|B _{t}^{(r+1)}\|_{L^{2}}^{2} \\\leq&  C(\| D  u_{t}^{(r )}  \|_{L^{2}}^{2}+\| D  B_{t}^{(r )}  \|_{L^{2}}^{2})^{\f{ (4r +3 )}{ 4r  +1}} +C\sum^{r-1}_{j=0}(\| D   u_{t}^{(j)}\|_{L^{2}}^{2}+\|  D   B_{t}^{(j)}\|_{L^{2}}^{2})^{ \f{4 r +3 }{4j +1}}
\\&+8\eta\sum_{j>0}(\| D  u_{t}^{(r-j)} \|^{2}_{L^{2}}+\| D  B_{t}^{(r-j)} \|^{2}_{L^{2}})^{\f{4  j}{  4(r-j)+1}}(
\| D ^{2}u_{t}^{(r-j)} \|^{2}_{L^{2}}+
\| D ^{2}B_{t}^{(r-j)} \|^{2}_{L^{2}}).
\ea
\ee
\end{lemma}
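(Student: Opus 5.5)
The plan is to repeat, at the level of the $r$-th time derivative, the two-step argument used for $r=1$ in parts (3)--(4) of the previous lemma. First apply $\partial_t^{r}$ to \eqref{MHD}. By the Leibniz rule,
\[
\partial_t^{r}u-\Delta u_t^{(r)}+\sum_{j=0}^{r}\binom{r}{j}\bigl(u_t^{(j)}\cdot\nabla u_t^{(r-j)}-B_t^{(j)}\cdot\nabla B_t^{(r-j)}\bigr)+\nabla\Pi_t^{(r)}=0,
\]
with the analogous equation for $B_t^{(r)}$.

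For \eqref{3.31v1} we test the two equations with $u_t^{(r)}$ and $B_t^{(r)}$ and add. The $j=0$ transport terms $u\cdot\nabla u_t^{(r)}$ and $u\cdot\nabla B_t^{(r)}$ vanish because $\Div u=0$. The two $j=0$ coupling terms, $-\int B\cdot\nabla B_t^{(r)}\cdot u_t^{(r)}$ and $-\int B\cdot\nabla u_t^{(r)}\cdot B_t^{(r)}$, cancel by integration by parts, exactly as in \eqref{3.8}. This cancellation is the key point: it removes every term that carries a full derivative on the highest-order unknown.

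Each remaining term has the form $\int f_t^{(j)}\cdot\nabla g_t^{(r-j)}\cdot h_t^{(r)}$ with $j\ge1$. We estimate it by H\"older as
\[
\|h_t^{(r)}\|_{L^2}\,\|f_t^{(j)}\|_{L^6}\,\|\nabla g_t^{(r-j)}\|_{L^3}
\le C\|h_t^{(r)}\|_{L^2}\,\|Df_t^{(j)}\|_{L^2}\,\|Dg_t^{(r-j)}\|_{L^2}^{1/2}\,\|D^2g_t^{(r-j)}\|_{L^2}^{1/2},
\]
using the Sobolev embedding and \eqref{keyinequality}. Alternatively we integrate by parts to move the derivative onto $h_t^{(r)}$ and absorb $\|\nabla h_t^{(r)}\|_{L^2}^2$ into the dissipation. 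We then apply Young's inequality in three factors so that every resulting term has the same scaling as the left-hand side.

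The scaling bookkeeping goes as follows. Under the natural scaling, $\|u_t^{(j)}\|_{L^2}^2$ behaves like $\lambda^{4j-1}$ and $\|Du_t^{(k)}\|_{L^2}^2$ like $\lambda^{4k+1}$. The target quantity $\frac{d}{dt}\|u_t^{(r)}\|_{L^2}^2$ has weight $4r+1$. Hence the exponents must be $\frac{4r+1}{4j-1}$ on the $\|u_t^{(j)}\|_{L^2}^2$ terms and $\frac{4r+1}{4(r-j)+1}$ on the $\|Du_t^{(r-j)}\|_{L^2}^2$ terms. The leftover factor $\|D^2u_t^{(r-j)}\|_{L^2}^2$ appears with exactly the power of $\|Du_t^{(r-j)}\|$ dictated by scaling, and it is kept with a small coefficient $\eta$, as in \eqref{3.11}. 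This reproduces \eqref{3.31v1}, and the case $r=1$ reduces to \eqref{3.13v2} as a consistency check.

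For \eqref{3.38v1} we follow part (4). We test the differentiated equations with $-\Delta u_t^{(r)}$ and $-\Delta B_t^{(r)}$, which produces $\frac{d}{dt}\|\nabla u_t^{(r)}\|_{L^2}^2$ together with the dissipation $\|D\nabla u_t^{(r)}\|_{L^2}^2$. Separately, we test with $u_t^{(r+1)}$ and $B_t^{(r+1)}$, which produces the same time derivative together with $\|u_t^{(r+1)}\|_{L^2}^2$; the pressure term drops out in both cases since the test functions are divergence free. We then add the two identities. Here there is no cancellation, so the $j=0$ terms $u\cdot\nabla u_t^{(r)}$ and $B\cdot\nabla B_t^{(r)}$ are handled as in \eqref{3.17}, using the bounds $\|u\|_{L^6}\|\nabla u_t^{(r)}\|_{L^3}$ and \eqref{keyinequality}. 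The resulting factor $\|D^2u_t^{(r)}\|^{3/2}$ is absorbed. All other terms are bounded by H\"older and Young and balanced with weight $4r+3$. The $j=r$ term contributes the $\eta$-term with $D^2u_t^{(0)}=D^2u$, and the lower-order factors $\|Du_t^{(j)}\|_{L^2}^2$ receive the exponent $\frac{4r+3}{4j+1}$.

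The main obstacle is the exponent bookkeeping in Young's inequality. Each of the roughly $4r$ cross terms mixes three different derivative levels, and for each we must choose the split so that (i) the highest-order factor appears with total power strictly below 2, so it can be absorbed, and (ii) the remaining factor involving $D^2u_t^{(r-j)}$ appears exactly squared, multiplied by a power of $\|Du_t^{(r-j)}\|$ that matches scaling. I would fix these exponents by a dimension count on each term and then verify that the Young conjugates close. Lower-order factors that are not themselves absorbable are sent into the two power sums.
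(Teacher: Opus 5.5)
Your route is the paper's route: the same test functions, the same cancellation of the $j=0$ terms, H\"older with $\nabla u_t^{(r-j)}\in L^3$ and \eqref{keyinequality}, and a three-factor Young inequality producing the $\eta$-terms. The step you postponed, ``verify that the Young conjugates close,'' is exactly where it breaks, and a dimension count cannot detect this. Scaling only makes the Young weights sum to $1$. Each weight is forced by the exponent of a factor that occurs in a single target term, and for $j$ close to $r$ one of them is negative. All norms below are $L^2$.

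In \eqref{3.31v1}, your bound $\|u_t^{(r)}\|\,\|Du_t^{(j)}\|\,\|Du_t^{(r-j)}\|^{1/2}\|D^2u_t^{(r-j)}\|^{1/2}$ forces the following weights:
\begin{itemize}
\item $\frac14$ on the $\eta$-term;
\item $\frac{4r-1}{2(4r+1)}$ on the power of $\|u_t^{(r)}\|^2$;
\item $\frac{4j+1}{2(4r+1)}$ on the power of $\|Du_t^{(j)}\|^2$ (or on $\varepsilon\|Du_t^{(r)}\|^2$ when $j=r$).
\end{itemize}
This leaves $\frac{4r-8j+3}{4(4r+1)}$ for the pure power of $\|Du_t^{(r-j)}\|$, which is negative once $8j>4r+3$, in particular at $j=r$. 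The paper's variant, with $u_t^{(j)}$ in $L^2$ and $u_t^{(r)}$ in $L^6$, has the same threshold. Concretely, at $j=r$ you would need
\[
\|u_t^{(r)}\|^{2}\|Du\|\,\|D^2u\|\le C\|u_t^{(r)}\|^{\frac{2(4r+1)}{4r-1}}+C\|Du\|^{2(4r+1)}+\eta\|Du\|^{4(2r-1)}\|D^2u\|^{2}.
\]
This fails for $\|u_t^{(r)}\|=1$, $\|D^2u\|=\|Du\|^{-2(2r-1)}$, $\|Du\|\to0$: the left side is $\|Du\|^{3-4r}\to\infty$ while the right side stays bounded.

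In \eqref{3.38v1} the corresponding leftover weight is $\frac{4r-8j+1}{4(4r+3)}$. Your claim that the $j=r$ term ``contributes the $\eta$-term with $D^2u$'' needs $\|Du_t^{(r)}\|^2\|Du\|\,\|D^2u\|\le C\|Du_t^{(r)}\|^{\frac{2(4r+3)}{4r+1}}+C\|Du\|^{2(4r+3)}+\eta\|Du\|^{8r}\|D^2u\|^2$. This is false by the same choice with $\|D^2u\|=\|Du\|^{-4r}$. The paper's written proof takes the same step (\eqref{3.36} and \eqref{3.40v2} at $j=r$; \eqref{3.11} and \eqref{3.15} for $r=1$), so following it more closely does not help.

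The lemma is nevertheless true if you choose the H\"older split according to $j$. For \eqref{3.31v1} use, for every $j\ge1$,
\[
\Big|\int_{\mathbb{R}^3}u_t^{(j)}\cdot\nabla u_t^{(r-j)}\cdot u_t^{(r)}dx\Big|\le\|u_t^{(j)}\|_{L^3}\|\nabla u_t^{(r-j)}\|\,\|u_t^{(r)}\|_{L^6}\le C\|u_t^{(j)}\|^{\frac12}\|Du_t^{(j)}\|^{\frac12}\|Du_t^{(r-j)}\|\,\|Du_t^{(r)}\|.
\]
This is what your integration-by-parts alternative gives if $f_t^{(j)}$ goes into $L^3$ and $g_t^{(r-j)}$ into $L^6$. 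After absorbing $\|Du_t^{(r)}\|$:
\begin{itemize}
\item for $j<r$, Young closes with the positive weights $\frac{4j-1}{2(4r+1)}$, $\frac{4j+1}{2(4r+1)}$, $\frac{4(r-j)+1}{4r+1}$;
\item for $j=r$ you get $\|u_t^{(r)}\|^2\|Du\|^4$, split with weights $\frac{4r-1}{4r+1}$ and $\frac{2}{4r+1}$.
\end{itemize}
No $\eta$-term is needed at all.

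For \eqref{3.38v1}, keep your split when $8j\le4r+1$. When $8j\ge4r-1$ (in particular $j=r$), use $\|u_t^{(j)}\|_{L^\infty}\|\nabla u_t^{(r-j)}\|$ with $\|f\|_{L^\infty}\le C\|Df\|^{1/2}\|D^2f\|^{1/2}$.
\begin{itemize}
\item For $j=r$ this gives $\|Du_t^{(r)}\|^{1/2}\|D^2u_t^{(r)}\|^{3/2}\|Du\|$, and $\|Du_t^{(r)}\|^{1/2}\|D^2u_t^{(r)}\|^{1/2}\|Du\|\,\|u_t^{(r+1)}\|$ in the $u_t^{(r+1)}$-test. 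Both are absorbed once the two identities are added, leaving $C\|Du_t^{(r)}\|^2\|Du\|^4$.
\item For $j\le r-1$ it gives $\|Du_t^{(j)}\|\,\|D^2u_t^{(j)}\|\,\|Du_t^{(r-j)}\|^2$. This splits with nonnegative weights into allowed pure powers plus $\eta\|Du_t^{(j)}\|^{\frac{8(r-j)}{4j+1}}\|D^2u_t^{(j)}\|^2$, which is exactly of the form allowed in \eqref{3.38v1}.
\end{itemize}
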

\begin{proof}
(1) We derive from the    MHD equations
\eqref{MHD} and Leibniz's formular that
\be\ba\label{3.26}
 &u^{(r+1) }_{t} -\Delta u^{(r)}_{t}+\sum_{j=0}^{r}\binom{r}{j}u_{t}^{(j)}\cdot\nabla u^{(r-j)}_{t}-\sum_{j=0}^{r}\binom{r}{j}B_{t}^{(j)}\cdot\nabla B^{(r-j)}_{t} +\nabla p^{(r)}_{t}=0,\\
&B^{(r+1) }_{t} -\Delta B^{(r)}_{t}+\sum_{j=0}^{r}\binom{r}{j}u_{t}^{(j)}\cdot\nabla B^{(r-j)}_{t}-\sum_{j=0}^{r}\binom{r}{j}B_{t}^{(j)}\cdot\nabla u^{(r-j)}_{t}=0.
\ea
\ee
We take the $L^{2}$  inner product of \eqref{3.26} with $u _{t}^{(r)}$ and $B _{t}^{(r)}$, respectively, and sum to get
\be\ba\label{3.27}
&\f{1}{2}\f{d}{dt}(\| u _{t}^{(r)}\|^{2}_{L^{2}}+\| B _{t}^{(r)}\|^{2}_{L^{2}})+\|  \nabla  u _{t}^{(r)}\|_{L^{2}}^{2}+\|  \nabla  B _{t}^{(r)}\|_{L^{2}}^{2}\\
=&-\sum_{j=0}^{r}\binom{r}{j}\int_{\mathbb{R}^{3}}u_{t}^{(j)}\cdot\nabla u^{(r-j)}_{t}  u _{t}^{(r)}dx+\sum_{j=0}^{r}\binom{r}{j}\int_{\mathbb{R}^{3}} B_{t}^{(j)}\cdot\nabla B^{(r-j)}_{t}  u _{t}^{(r)}dx\\
&-\sum_{j=0}^{r}\binom{r}{j}\int_{\mathbb{R}^{3}}u_{t}^{(j)}\cdot\nabla B^{(r-j)}_{t}  B _{t}^{(r)}dx+\sum_{j=0}^{r}\binom{r}{j}\int_{\mathbb{R}^{3}} B_{t}^{(j)}\cdot\nabla u^{(r-j)}_{t}  B_{t}^{(r)}dx.
\ea
\ee
According to the divergence free condition and integration by parts, one has
$$\ba&\int_{\mathbb{R}^{3}}u \cdot\nabla u_{t}^{(r )} \cdot u _{t}^{(r)}dx=0,\\
&\int_{\mathbb{R}^{3}}B \cdot\nabla B_{t}^{(r )} \cdot u _{t}^{(r)}dx-\int_{\mathbb{R}^{3}}B \cdot\nabla u_{t}^{(r )}\cdot B _{t}^{(r)}dx=0,\\
&\int_{\mathbb{R}^{3}}u \cdot\nabla B_{t}^{(r )} \cdot B _{t}^{(r)}dx=0.\ea$$
As a consquence, we reformulate \eqref{3.27} as
\be\ba\label{3.33}
&\f{1}{2}\f{d}{dt}(\| u _{t}^{(r)}\|^{2}_{L^{2}}+\| B _{t}^{(r)}\|^{2}_{L^{2}})+\|  \nabla  u _{t}^{(r)}\|_{L^{2}}^{2}+\|  \nabla  B _{t}^{(r)}\|_{L^{2}}^{2}\\=&-\sum_{j=1}^{r}\binom{r}{j}\int_{\mathbb{R}^{3}} u_{t}^{(j)}\cdot\nabla u_{t}^{(r-j)}  u _{t}^{(r)}dx+\sum_{j=1}^{r}\binom{r}{j}\int_{\mathbb{R}^{3}} B_{t}^{(j)}\cdot\nabla B_{t}^{(r-j)}  u _{t}^{(r)}dx\\
&-\sum_{j=1}^{r}\binom{r}{j}\int_{\mathbb{R}^{3}}u_{t}^{(j)}\cdot\nabla B_{t}^{(r-j)}  B _{t}^{(r)}dx+\sum_{j=1}^{r}\binom{r}{j}\int_{\mathbb{R}^{3}} B_{t}^{(j)}\cdot\nabla u_{t}^{(r-j)}  B _{t}^{(r)}dx.
\ea
\ee
As the same manner of \eqref{3.17}, we derive from  the Young inequality that
\be\ba\label{3.36}
\B|\int_{\mathbb{R}^{3}}u_{t}^{(j)}\cdot\nabla u_{t}^{(r-j)}\cdot  u _{t}^{(r)}dx\B|
\leq& \|u_{t}^{(j)}\|_{L^{2}} \|\nabla u^{(r-j)}_{t}\|_{L^{3}}\| u _{t}^{(r)}\|_{L^{6}}\\
\leq&\|u_{t}^{(j)}\|_{L^{2}}\| D  u_{t}^{(r-j)} \|_{L^{2}}^{\f{1}{ 2}}
\| D ^{2}u_{t}^{(r-j)} \|^{\f{1}{ 2}}_{L^{2}}    \| D   u _{t}^{(r)}\|_{L^{2}} \\
\leq&C\|u_{t}^{(j)}\|^{2}_{L^{2}}\| D  u_{t}^{(r-j)} \|_{L^{2}}^{\f{1}{  \alpha}}
\| D ^{2}u_{t}^{(r-j)} \| _{L^{2}}  +\varepsilon  \| D   u _{t}^{(r)}\|_{L^{2}}^{2}\\
\leq&C \|u_{t}^{(j)}\|
^{\f{2(4r +1)}{4j -1}}
_{L^{2}}
+C \| D  u_{t}^{(r-j)} \|_{L^{2}}^{\f{2(4r +1)}{4(r-j)+1}}
\\&+\eta \| D  u_{t}^{(r-j)} \|_{L^{2}}^{\f{4 (2j-1)}{  4(r-j)+1}}
\| D ^{2}u_{t}^{(r-j)} \|^{2}_{L^{2}} +\varepsilon  \| D   u _{t}^{(r)}\|_{L^{2}}^{2},\ea
\ee
where $\eta$ and $\varepsilon$ will be fixed later. \\
Repeating the above deduction, we infer that
\be\ba
\B|\int_{\mathbb{R}^{3}}B_{t}^{(j)}\cdot\nabla B_{t}^{(r-j)}\cdot   u _{t}^{(r)}dx\B|
\leq&C \|B_{t}^{(j)}\|
^{\f{2(4r +1)}{4j -1}}
_{L^{2}}
+C \| D  B_{t}^{(r-j)} \|_{L^{2}}^{\f{2(4r +1)}{4(r-j)+1}}
\\&+\eta \| D  B_{t}^{(r-j)} \|_{L^{2}}^{\f{4 (2j-1)}{  4(r-j)+1}}
\| D ^{2}B_{t}^{(r-j)} \|^{2}_{L^{2}} +\varepsilon  \| D   u _{t}^{(r)}\|_{L^{2}}^{2},\ea
\ee
\be\ba
\B|\int_{\mathbb{R}^{3}}u_{t}^{(j)}\cdot\nabla B_{t}^{(r-j)} \cdot  B _{t}^{(r)}dx\B|
\leq&C \|u_{t}^{(j)}\|
^{\f{2(4r +1)}{4j -1}}
_{L^{2}}
+C \| D  B_{t}^{(r-j)} \|_{L^{2}}^{\f{2(4r +1)}{4(r-j)+1}}
\\&+\eta \| D  B_{t}^{(r-j)} \|_{L^{2}}^{\f{4 (2j-1)}{  4(r-j)+1}}
\| D ^{2}B_{t}^{(r-j)} \|^{2}_{L^{2}} +\varepsilon  \| D   B_{t}^{(r)}\|_{L^{2}}^{2},\ea
\ee
and
 \be\ba\label{3.37}
\B|\int_{\mathbb{R}^{3}}B_{t}^{(j)}\cdot\nabla u_{t}^{(r-j)} \cdot  B _{t}^{(r)}dx\B|
\leq&C \|B_{t}^{(j)}\|
^{\f{2(4r +1)}{4j -1}}
_{L^{2}}
+C \| D  u_{t}^{(r-j)} \|_{L^{2}}^{\f{2(4r +1)}{4(r-j)+1}}
\\&+\eta \| D  u_{t}^{(r-j)} \|_{L^{2}}^{\f{4 (2j-1)}{  4(r-j)+1}}
\| D ^{2}u_{t}^{(r-j)} \|^{2}_{L^{2}} +\varepsilon  \| D   B_{t}^{(r)}\|_{L^{2}}^{2}.\ea
\ee
Substituting \eqref{3.36}-\eqref{3.37} into \eqref{3.33}, we conclude by
  choosing $\varepsilon$  sufficiently small  that
 \be\ba\label{3.31}
&\f{1}{2}\f{d}{dt}(\| u _{t}^{(r)}\|^{2}_{L^{2}}+\| B _{t}^{(r)}\|^{2}_{L^{2}})+\f34\|  \nabla  u _{t}^{(r)}\|_{L^{2}}^{2}+\f34\|  \nabla  B _{t}^{(r)}\|_{L^{2}}^{2} \\
\leq&C\sum_{j>0}(\|u_{t}^{(j)}\|_{L^{2}}^{2}+\|B_{t}^{(j)}\|_{L^{2}}^{2})
^{\f{ (4r +1)}{4j -1}}
+C\sum_{j>0}(\| D  u_{t}^{(r-j)} \|_{L^{2}}^{2}+\| D  u_{t}^{(r-j)} \|_{L^{2}}^{2})^{\f{(4r +1)}{4(r-j)+1}}
\\&+4\eta\sum_{j>0}(\| D  u_{t}^{(r-j)} \|_{L^{2}}+\| D  B_{t}^{(r-j)} \|_{L^{2}})^{\f{2 (2j-1)}{  4(r-j)+1}}
(\| D ^{2}u_{t}^{(r-j)} \|^{2}_{L^{2}}+\| D ^{2}B_{t}^{(r-j)} \|^{2}_{L^{2}}).
\ea
\ee
(2) Taking scalar product of $\eqref{3.26}_{1}$ by
$ D ^{2}u _{t}^{(r)}$ and $\eqref{3.26}_{2}$ by
$ D ^{2}B _{t}^{(r)}$, we integrate over $\mathbb{R}^{3}$ and sum to deduce that
\be\ba\label{3.39v2}
&\f{1}{2}\f{d}{dt}(\|  D  u _{t}^{(r)}\|^{2}_{L^{2}}+\|  D  B _{t}^{(r)}\|^{2}_{L^{2}})+\| D \nabla u _{t}^{(r)}\|_{L^{2}}^{2}+\| D \nabla B _{t}^{(r)}\|_{L^{2}}^{2}\\=&-\sum_{j=0}^{r}\binom{r}{j}\int_{\mathbb{R}^{3}} u_{t}^{(j)}\cdot\nabla u^{(r-j)}_{t}\cdot  D ^{2}u _{t}^{(r)}dx+\sum_{j=0}^{r}\binom{r}{j}\int_{\mathbb{R}^{3}} B_{t}^{(j)}\cdot\nabla B^{(r-j)}_{t}\cdot  D ^{2}u _{t}^{(r)}dx\\&-\sum_{j=0}^{r}\binom{r}{j}\int_{\mathbb{R}^{3}} u_{t}^{(j)}\cdot\nabla B^{(r-j)}_{t}\cdot  D ^{2}B _{t}^{(r)}dx+\sum_{j=0}^{r}\binom{r}{j}\int_{\mathbb{R}^{3}} B_{t}^{(j)}\cdot\nabla u^{(r-j)}_{t}\cdot  D ^{2}B _{t}^{(r)}dx.
\ea
\ee
A slight modification of derivation of \eqref{3.17}, one gets
\be\ba\label{3.40v2}
&\B|\sum_{j=0}\int_{\mathbb{R}^{3}}u_{t}^{(j)}\cdot\nabla u^{(r-j)}_{t}\cdot  D ^{2}u _{t}^{(r)}dx\B|\\
\leq& \sum_{j=0}\| u_{t}^{(j)}\|_{L^{6}}\|\nabla u^{(r-j)}_{t}\|_{L^{3}}\| D ^{2}u _{t}^{(r)}\|_{L^{2}}\\
\leq& \sum_{j=0}\|  D   u_{t}^{(j)}\|_{L^{2}}\| D  u_{t}^{(r-j)} \|_{L^{2}}^{\f{1}{ 2}}
\| D ^{2}u_{t}^{(r-j)} \|^{\f{1}{ 2}}_{L^{2}}\| D ^{2}u _{t}^{(r)}\|_{L^{2}}\\
\leq& \sum_{j>0}\|  D   u_{t}^{(j)}\|_{L^{2}}^{2}\| D  u_{t}^{(r-j)} \|_{L^{2}}
\| D ^{2 }u_{t}^{(r-j)} \| _{L^{2}}+\f{\varepsilon}{2}\| D ^{2}u _{t}^{(r)}\|_{L^{2}}^{2}\\&+\|  D   u \|_{L^{2}}\| D  u_{t}^{(r )} \|_{L^{2}}^{\f{1}{ 2}}
\| D ^{2}u_{t}^{(r )} \|^{\f{1}{ 2}}_{L^{2}}\| D ^{2}u _{t}^{(r)}\|_{L^{2}}\\
\leq& C\sum_{j>0}\|  D   u_{t}^{(j)}\|_{L^{2}}^{\f{2 (4 r +3 )}{4j +1}}
+C\sum_{j>0}\| D  u_{t}^{(r-j)} \|_{L^{2}}^{\f{2(4 r +3 )}{4(r-j)+1}}
\\&+\eta\sum_{j>0}\| D  u_{t}^{(r-j)} \|_{L^{2}}^{\f{8  j}{  4(r-j)+1}}
\| D ^{2}u_{t}^{(r-j)} \|^{2}_{L^{2}}\\&+C\| D  u  \|_{L^{2}}^{ 2(4r +3 ) }+C\| D  u_{t}^{(r )}  \|_{L^{2}}^{\f{2(4r +3 )}{ 4r  +1}}+\varepsilon\| D ^{2}u _{t}^{(r)}\|_{L^{2}}^{2},
 \ea
\ee
where $\varepsilon$ and $\eta$ will be specified below.\\
 In a similar way we can prove that
\be\ba
&\B|\sum_{j=0}^{r}\binom{r}{j}\int_{\mathbb{R}^{3}}B_{t}^{(j)}\cdot\nabla B^{(r-j)}_{t}\cdot D ^{2}u _{t}^{(r)}dx\B|\\
 \leq&  C\| D  B_{t}^{(r )}  \|_{L^{2}}^{\f{2(4r +3 )}{ 4r  +1}}+C\sum_{j=0}^{r-1}\|  D   B_{t}^{(j)}\|_{L^{2}}^{2\f{4 r +3 }{4j +1}}
\\&+C\sum_{j>0}\| D  B_{t}^{(r-j)} \|_{L^{2}}^{\f{2( 4 r +3 )}{4(r-j)+1}}
+\eta\sum_{j>0}\| D  B_{t}^{(r-j)} \|_{L^{2}}^{\f{8  j}{  4(r-j)+1}}
\| D ^{2}B_{t}^{(r-j)} \|^{2}_{L^{2}}+\varepsilon\| D ^{2}u _{t}^{(r)}\|_{L^{2}}^{2},
\ea
\ee
\be\ba
&\B|-\sum_{j=0}^{r}\binom{r}{j}\int_{\mathbb{R}^{3}}u_{t}^{(j)}\cdot\nabla B^{(r-j)}_{t}\cdot D ^{2}B _{t}^{(r)}dx\B|\\
 \leq&  C\| D  u_{t}^{(r )}  \|_{L^{2}}^{\f{2(4r +3 )}{ 4r  +1}}+C\sum_{j=0}^{r-1}\|  D   u_{t}^{(j)}\|_{L^{2}}^{2\f{4 r +3 }{4j +1}}
\\&+C\sum_{j>0}\| D  B_{t}^{(r-j)} \|_{L^{2}}^{\f{2( 4 r +3 )}{4(r-j)+1}}
+\eta\sum_{j>0}\| D  B_{t}^{(r-j)} \|_{L^{2}}^{\f{8  j}{  4(r-j)+1}}
\| D ^{2}B_{t}^{(r-j)} \|^{2}_{L^{2}}+\varepsilon\| D ^{2}B _{t}^{(r)}\|_{L^{2}}^{2},
\ea
\ee
and
\be\ba\label{3.43}
&\B|\sum_{j=0}^{r}\binom{r}{j}\int_{\mathbb{R}^{3}}B_{t}^{(j)}\cdot\nabla u^{(r-j)}_{t}\cdot D ^{2}B _{t}^{(r)}dx\B|
\\
 \leq& C\| D  B\|_{L^{2}}^{ 2(4r +3 ) }+C\| D  B_{t}^{(r )}  \|_{L^{2}}^{\f{2(4r +3 )}{ 4r  +1}}+C\sum_{j>0}\|  D   B_{t}^{(j)}\|_{L^{2}}^{2\f{4 r +3 }{4j +1}}
\\&+C\sum_{j>0}\| D  u_{t}^{(r-j)} \|_{L^{2}}^{\f{2( 4 r +3 )}{4(r-j)+1}}
+\eta\sum_{j>0}\| D  u_{t}^{(r-j)} \|_{L^{2}}^{\f{8  j}{  4(r-j)+1}}
\| D ^{2}u_{t}^{(r-j)} \|^{2}_{L^{2}}+\varepsilon\| D ^{2}B _{t}^{(r)}\|_{L^{2}}^{2}.
\ea
\ee
Plugging    \eqref{3.40v2}-\eqref{3.43}   into  \eqref{3.39v2} and using
  the arbitrariness of $\varepsilon$,
one has
\be\ba\label{3.35}
&\f{1}{2}\f{d}{dt}(\|  D  u _{t}^{(r)}\|^{2}_{L^{2}}+\|  D  B _{t}^{(r)}\|^{2}_{L^{2}})+\f{3}{4}\| D  \nabla u _{t}^{(r)}\|_{L^{2}}^{2}+\f{3}{4}\| D  \nabla B _{t}^{(r)}\|_{L^{2}}^{2}\\\leq&  C(\| D  u_{t}^{(r )}  \|_{L^{2}}^{2}+\| D  u_{t}^{(r )}  \|_{L^{2}}^{2})^{\f{ (4r +3 )}{ 4r  +1}}\\&+C\sum^{r-1}_{j=0}(\|  D   u_{t}^{(j)}\|^{2}_{L^{2}}+\| D   u_{t}^{(j)}\|_{L^{2}}^{2})^{ \f{4 r +3 }{4j +1}}
 +C\sum_{j>0}(\| D  u_{t}^{(r-j)} \|_{L^{2}}^{2}+\| D  B_{t}^{(r-j)} \|_{L^{2}}^{2})^{\f{ ( 4 r +3 )}{4(r-j)+1}}
\\&+\eta\sum_{j>0}(\| D  u_{t}^{(r-j)} \|_{L^{2}}^{2}+\| D  B_{t}^{(r-j)} \|_{L^{2}}^{2})^{\f{4 j}{  4(r-j)+1}}
(\| D ^{2}u_{t}^{(r-j)} \|^{2}_{L^{2}}+\| D ^{2}B_{t}^{(r-j)} \|^{2}_{L^{2}})\\\leq&  C(\| D  u_{t}^{(r )}  \|_{L^{2}}^{2}+\| D  u_{t}^{(r )}  \|_{L^{2}}^{2})^{\f{ (4r +3 )}{ 4r  +1}} +C\sum^{r-1}_{j=0}(\|  D   u_{t}^{(j)}\|^{2}_{L^{2}}+\|  D   u_{t}^{(j)}\|_{L^{2}}^{2})^{ \f{4 r +3 }{4j +1}}
  \\&+4\eta\sum_{j>0}(\| D  u_{t}^{(r-j)} \|_{L^{2}}^{2}+\| D  B_{t}^{(r-j)} \|_{L^{2}}^{2})^{\f{4 j}{  4(r-j)+1}}
(\| D ^{2}u_{t}^{(r-j)} \|^{2}_{L^{2}}+\| D ^{2}B_{t}^{(r-j)} \|^{2}_{L^{2}}).
\ea
\ee
We multiply the velocity equations $\eqref{3.26}_{1}$ by  $u _{t}^{(r+1)}$
and the velocity equations $\eqref{3.26}_{2}$ by  $B _{t}^{(r+1)}$, integrate over the spatial variable
and sum to obtain
\be\ba\label{3.44}
& \f{1}{2}\f{d}{dt}(\|   \nabla  u _{t}^{(r)}\|^{2}_{L^{2}}+\|  \nabla   B _{t}^{(r)}\|^{2}_{L^{2}})+\| u _{t}^{(r+1)}\|_{L^{2}}^{2}+\|B _{t}^{(r+1)}\|_{L^{2}}^{2}\\
=&-\sum_{j=0}^{r}\binom{r}{j}\int_{\mathbb{R}^{3}}u_{t}^{(j)}\cdot\nabla u^{(r-j)}_{t}\cdot u _{t}^{(r+1)}dx+\sum_{j=0}^{r}\binom{r}{j}\int_{\mathbb{R}^{3}} B_{t}^{(j)}\cdot\nabla B^{(r-j)}_{t}\cdot u _{t}^{(r+1)}dx\\&-\sum_{j=0}^{r}\binom{r}{j}\int_{\mathbb{R}^{3}} u_{t}^{(j)}\cdot\nabla B^{(r-j)}_{t}\cdot B _{t}^{(r+1)}dx+\sum_{j=0}^{r}\binom{r}{j}\int_{\mathbb{R}^{3}} B_{t}^{(j)}\cdot\nabla u^{(r-j)}_{t}\cdot B _{t}^{(r+1)}dx\\
\leq& C\sum_{j=0}\| u_{t}^{(j)}\|_{L^{6}}\|\nabla u^{(r-j)}_{t}\|_{L^{3}}\| u _{t}^{(r+1)}\|_{L^{2}}\\
\leq&C\sum_{j=0} \|  D   u_{t}^{(j)}\|_{L^{2}}\| D  u_{t}^{(r-j)} \|_{L^{2}}^{\f{1}{ 2}}
\| D ^{2}u_{t}^{(r-j)} \|^{\f{1}{ 2}}_{L^{2}} \| u _{t}^{(r+1)}\|_{L^{2}}.
\ea
\ee
As derivation of \eqref{3.17}, we infer that
\be\ba\label{3.45v1}&\B|-\sum_{j=0}^{r}\binom{r}{j}\int_{\mathbb{R}^{3}} u_{t}^{(j)}\cdot\nabla u^{(r-j)}_{t}\cdot u _{t}^{(r+1)}dx\B|\\
\leq& C\sum_{j=0}\| u_{t}^{(j)}\|_{L^{6}}\|\nabla u^{(r-j)}_{t}\|_{L^{3}}\| u _{t}^{(r+1)}\|_{L^{2}}\\
\leq& \sum_{j>0}\|  D   u_{t}^{(j)}\|_{L^{2}}^{2}\| D  u_{t}^{(r-j)} \|_{L^{2}}^{\f{1}{ 2}}
\| D ^{ 2}u_{t}^{(r-j)} \| _{L^{2}}\\&+\|  D   u \|_{L^{2}}^{2}\| D  u_{t}^{(r )} \|_{L^{2}}
\| D ^{2}u_{t}^{(r )} \|_{L^{2}}+\varepsilon\| u _{t}^{(r+1)}\|_{L^{2}}^{2} \\
\leq& \sum_{j>0}\|  D   u_{t}^{(j)}\|_{L^{2}}^{2}\| D  u_{t}^{(r-j)} \|_{L^{2}}^{\f{1}{ 2}}
\| D ^{ 2}u_{t}^{(r-j)} \|_{L^{2}}\\&+\varepsilon\| D ^{2}u _{t}^{(r)}\|_{L^{2}}^{2}+\delta\| u _{t}^{(r+1)}\|_{L^{2}}^{2}+C\| D  u  \|_{L^{2}}^{ 2(4r +3 ) }+C\| D  u_{t}^{(r )}  \|_{L^{2}}^{\f{2(4r +3 )}{ 4r  +1}}\\
\leq& C\sum_{j>0}\|  D   u_{t}^{(j)}\|_{L^{2}}^{2\f{4 r +3 }{4j +1}}
 +C\sum_{j>0}\| D  u_{t}^{(r-j)} \|_{L^{2}}^{\f{2( 4 r +3 )}{4(r-j)+1}}
+\eta\sum_{j>0}\| D  u_{t}^{(r-j)} \|_{L^{2}}^{\f{8  j}{  4(r-j)+1}}
\| D ^{2}u_{t}^{(r-j)} \|^{2}_{L^{2}}\\&+\varepsilon\| D ^{2}u _{t}^{(r)}\|_{L^{2}}^{2}+C\| D  u  \|_{L^{2}}^{ 2(4r +3 ) }+C\| D  u_{t}^{(r )}  \|_{L^{2}}^{\f{2(4r +3 )}{ 4r  +1}}+\delta\| u _{t}^{(r+1)}\|_{L^{2}}^{2},
\ea
\ee
where $\varepsilon$, $\delta$ and $\eta$   are to be chosen later.

Arguing as above, we deduce that
\be\ba
&\B|\sum_{j=0}^{r}\binom{r}{j}\int_{\mathbb{R}^{3}}B_{t}^{(j)}\cdot\nabla B^{(r-j)}_{t}\cdot u _{t}^{(r+1)}dx\B|
\\
\leq& C\sum_{j>0}\|  D   B_{t}^{(j)}\|_{L^{2}}^{2\f{4 r +3 }{4j +1}}
 +C\sum_{j>0}\| D  B_{t}^{(r-j)} \|_{L^{2}}^{\f{2( 4 r +3 )}{4(r-j)+1}}
+\eta\sum_{j>0}\| D  B_{t}^{(r-j)} \|_{L^{2}}^{\f{8  j}{  4(r-j)+1}}
\| D ^{2}B_{t}^{(r-j)} \|^{2}_{L^{2}}\\&+\varepsilon\| D ^{2}B _{t}^{(r)}\|_{L^{2}}^{2}+C\| D  B  \|_{L^{2}}^{ 2(4r +3 ) }+C\| D  B_{t}^{(r )}  \|_{L^{2}}^{\f{2(4r +3 )}{ 4r  +1}}+\delta\| u _{t}^{(r+1)}\|_{L^{2}}^{2},
\ea
\ee
\be\ba
&\B|-\sum_{j=0}^{r}\binom{r}{j}\int_{\mathbb{R}^{3}}u_{t}^{(j)}\cdot\nabla B^{(r-j)}_{t}\cdot B _{t}^{(r+1)}dx\B|
\\
\leq& C\sum_{j>0}\|  D   u_{t}^{(j)}\|_{L^{2}}^{2\f{4 r +3 }{4j +1}}
 +C\sum_{j>0}\| D  B_{t}^{(r-j)} \|_{L^{2}}^{\f{2( 4 r +3 )}{4(r-j)+1}}
+\eta\sum_{j>0}\| D  B_{t}^{(r-j)} \|_{L^{2}}^{\f{8  j}{  4(r-j)+1}}
\| D ^{2}B_{t}^{(r-j)} \|^{2}_{L^{2}}\\&+\varepsilon\| D ^{2}B _{t}^{(r)}\|_{L^{2}}^{2}+C\| D  u  \|_{L^{2}}^{ 2(4r +3 ) }+C\| D  B_{t}^{(r )}  \|_{L^{2}}^{\f{2(4r +3 )}{ 4r  +1}}+\delta\| B _{t}^{(r+1)}\|_{L^{2}}^{2},
\ea
\ee
and
\be\ba\label{3.48v1}
&\B|\sum_{j=0}^{r}\binom{r}{j}\int_{\mathbb{R}^{3}}B_{t}^{(j)}\cdot\nabla u^{(r-j)}_{t}\cdot B _{t}^{(r+1)}dx\B|
\\
\leq& C\sum_{j>0}\|  D   B_{t}^{(j)}\|_{L^{2}}^{2\f{4 r +3 }{4j +1}}
 +C\sum_{j>0}\| D  u_{t}^{(r-j)} \|_{L^{2}}^{\f{2( 4 r +3 )}{4(r-j)+1}}
+\eta\sum_{j>0}\| D  u_{t}^{(r-j)} \|_{L^{2}}^{\f{8  j}{  4(r-j)+1}}
\| D ^{2}u_{t}^{(r-j)} \|^{2}_{L^{2}}\\&+\varepsilon\| D ^{2}u _{t}^{(r)}\|_{L^{2}}^{2}+C\| D  B  \|_{L^{2}}^{ 2(4r +3 ) }+C\| D  B_{t}^{(r )}  \|_{L^{2}}^{\f{2(4r +3 )}{ 4r  +1}}+\delta\| B _{t}^{(r+1)}\|_{L^{2}}^{2}.
\ea
\ee
Inserting   \eqref{3.45v1}-\eqref{3.48v1}  into \eqref{3.44}
and taking $\delta$ small, we thus obtain
\be\ba
& \f12\f{d}{dt}(\|   \nabla   u _{t}^{(r)}\|^{2}_{L^{2}}+\|   \nabla   B _{t}^{(r)}\|^{2}_{L^{2}})+\f12\| u _{t}^{(r+1)}\|_{L^{2}}^{2}+\f12\| B _{t}^{(r+1)}\|_{L^{2}}^{2}\\\leq& C(\| D  u  \|^{2}_{L^{2}}+\| D  B  \|^{2}_{L^{2}})^{  (4r +3 ) }+C(\| D  u_{t}^{(r )}  \|_{L^{2}}^{2}+\| D  B_{t}^{(r )}  \|_{L^{2}}^{2})^{\f{ (4r +3 )}{ 4r  +1}}\\&+C\sum_{j>0}(\| D   u_{t}^{(j)}\|_{L^{2}}^{2}+\|  D   B_{t}^{(j)}\|_{L^{2}}^{2})^{ \f{4 r +3 }{4j +1}}
+C\sum_{j>0}(\| D  u_{t}^{(r-j)} \|_{L^{2}}^{2}+\| D  B_{t}^{(r-j)} \|_{L^{2}}^{2})^{\f{ ( 4 r +3 )}{4(r-j)+1}}
\\&+4\eta\sum_{j>0}(\| D  u_{t}^{(r-j)} \|^{2}_{L^{2}}+\| D  B_{t}^{(r-j)} \|^{2}_{L^{2}})^{\f{4  j}{  4(r-j)+1}}(
\| D ^{2}u_{t}^{(r-j)} \|^{2}_{L^{2}}+
\| D ^{2}B_{t}^{(r-j)} \|^{2}_{L^{2}})\\&+\varepsilon(\| D ^{2}u _{t}^{(r)}\|_{L^{2}}^{2}+
\| D ^{2}B_{t}^{(r)} \|^{2}_{L^{2}}).
\ea
\ee
We derive from this and
\eqref{3.35} that
\be\ba\label{3.38}
&\f{d}{dt}(\|   \nabla   u _{t}^{(r)}\|^{2}_{L^{2}}+\|   \nabla   B _{t}^{(r)}\|^{2}_{L^{2}})+\f{1}{2}\| D  \nabla u _{t}^{(r)}\|_{L^{2}}^{2}+\f{1}{2}\| D  \nabla B _{t}^{(r)}\|_{L^{2}}^{2} +\f{1}{2}\| u _{t}^{(r+1)}\|_{L^{2}}^{2}+\f{1}{2}\|B _{t}^{(r+1)}\|_{L^{2}}^{2}\\\leq& C(\| D  u  \|^{2}_{L^{2}}+\| D  B  \|^{2}_{L^{2}})^{  (4r +3 ) }+C(\| D  u_{t}^{(r )}  \|_{L^{2}}^{2}+\| D  B_{t}^{(r )}  \|_{L^{2}}^{2})^{\f{ (4r +3 )}{ 4r  +1}}\\&+C\sum_{j>0}(\| D   u_{t}^{(j)}\|_{L^{2}}^{2}+\|  D   B_{t}^{(j)}\|_{L^{2}}^{2})^{ \f{4 r +3 }{4j +1}}
+C\sum_{j>0}(\| D  u_{t}^{(r-j)} \|_{L^{2}}^{2}+\| D  B_{t}^{(r-j)} \|_{L^{2}}^{2})^{\f{ ( 4 r +3 )}{4(r-j)+1}}
\\&+8\eta\sum_{j>0}(\| D  u_{t}^{(r-j)} \|^{2}_{L^{2}}+\| D  B_{t}^{(r-j)} \|^{2}_{L^{2}})^{\f{4  j}{  4(r-j)+1}}(
\| D ^{2}u_{t}^{(r-j)} \|^{2}_{L^{2}}+
\| D ^{2}B_{t}^{(r-j)} \|^{2}_{L^{2}})\\\leq&  C(\| D  u_{t}^{(r )}  \|_{L^{2}}^{2}+\| D  B_{t}^{(r )}  \|_{L^{2}}^{2})^{\f{ (4r +3 )}{ 4r  +1}} +C\sum^{r-1}_{j=0}(\|  D   u_{t}^{(j)}\|_{L^{2}}^{2}+\|  D   B_{t}^{(j)}\|_{L^{2}}^{2})^{ \f{4 r +3 }{4j +1}}
\\&+8\eta\sum_{j>0}(\| D  u_{t}^{(r-j)} \|^{2}_{L^{2}}+\| D  B_{t}^{(r-j)} \|^{2}_{L^{2}})^{\f{4  j}{  4(r-j)+1}}(
\| D ^{2}u_{t}^{(r-j)} \|^{2}_{L^{2}}+
\| D ^{2}B_{t}^{(r-j)} \|^{2}_{L^{2}}),
\ea
\ee
where we have chosen $\varepsilon$ sufficiently small.
\end{proof}
With this in hand,
we are now turning to the proof of Theorem  \ref{the1.2}.
\begin{proof}[Proof of Theorem \ref{the1.2}]
We will establish the following fact  by induction on $k\in \mathbb{N}$,
\be\label{3.40}
\left\{\ba
&F_{r,0}\in L^{\f{1}{4r -1}},\\
&\f{d}{dt}F_{r,0}+G_{r,0}\leq CF^{\f{4r +1}{4r -1}}_{r,0},\\
&G_{r,0}\in L^{\f{1}{4r +1}}, \| D  u^{( r) }_{t}\|_{L^{2}}^{2},\| D  B^{( r) }_{t}\|_{L^{2}}^{2}\in L^{\f{1}{4r +1}},\\
&F_{r,1}\in L^{\f{1}{4r +1}},\\
&\f{d}{dt}F_{r,1}+G_{r,1}\leq CF^{\f{4r +2}{4r +1}}_{r,1},\\
&G_{r,1}\in L^{\f{1}{4r +3 }}, \| D ^{2}u^{( r) }_{t}\|_{L^{2}}^{2},\| u^{( r+1) }_{t}\|_{L^{2}}^{2}\in L^{\f{1}{4r -1}},  \| D ^{2}B^{( r) }_{t}\|_{L^{2}}^{2},\| B^{( r+1) }_{t}\|_{L^{2}}^{2}\in L^{\f{1}{4r -1}},\ea\right.\ee
where
\be\ba\label{3.52}
&F_{r,0}=\|u_{t}^{(r)}\|_{L^{2}}^{2}+\|B_{t}^{(r)}\|_{L^{2}}^{2}+\sum_{j=1}^{r-1}\f{4j -1}{4r -1}F_{j,0}^{\f{4r -1}{4j -1}}+\sum_{j=0}^{r-1}\f{4j +1}{4r -1}F_{j,1}^{\f{4r -1}{4j +1}},\\
&G_{r,0}=\f12\| D  u_{t}^{(r)}\|_{L^{2}}^{2}+\f12\| D  B_{t}^{(r)}\|_{L^{2}}^{2}+\f12
\sum_{j=1}^{r-1}F_{j,1}^{\f{4(r-j)-2}{4j +1}}G_{j,1},\\
&F_{r,1}=\| D  u_{t}^{(r)}\|_{L^{2}}^{2}+\| D  B_{t}^{(r)}\|_{L^{2}}^{2}+\sum_{j=0}^{r-1}\f{4j +1}{4r +1}F_{j,1}^{\f{4r +1}{4j +1}},\\
&G_{r,1}=\f12\| D ^{2}u_{t}^{(r)}\|_{L^{2}}^{2}
+\f12\| D ^{2}B_{t}^{(r)}\|_{L^{2}}^{2}+\f12\| u_{t}^{(r+1)}\|_{L^{2}}^{2}+\f12\| B_{t}^{(r+1)}\|_{L^{2}}^{2}+\f12
\sum_{j=0}^{r-1}F_{j,1}^{\f{4(r-j)-2}{4j +1}}G_{j,1}.
\ea\ee
The above definitions of $F_{r,0}, G_{r,0}, F_{r,1}, G_{r,1}$ rest on the energy inequalities  \eqref{3.31v1} and \eqref{3.38v1}.

Hence, for the case $k=0$, we notice that
\be\ba\label{3.39} &F_{0,1}=\| D  u  \|_{L^{2}}^{2}+\| D  B  \|_{L^{2}}^{2}, \\&G_{0,1}=\f12\| D ^{2}u  \|_{L^{2}}^{2}+\f12\| u_{t}  \|_{L^{2}}^{2} +\f12\| D ^{2}B  \|_{L^{2}}^{2}+\f12\| B_{t}  \|_{L^{2}}^{2},\\
&F_{1,0}=(\| u_{t}  \|_{L^{2}}^{2}+\| B_{t}  \|_{L^{2}}^{2})+\f{1}{ 3  }(\| D  u  \|_{L^{2}}^{2}+\| D  B  \|_{L^{2}}^{2})^{3}=(\| u_{t}  \|_{L^{2}}^{2}+\| B_{t}  \|_{L^{2}}^{2})+\f{1}{ 3  }F_{0,1}^{3},\\
&G_{1,0}=\f12\|  D   u_{t}  \|_{L^{2}}^{2}+\f12\|  D   B_{t}  \|_{L^{2}}^{2}+\f12F_{0,1}^{2}G_{0,1}, \\
&F_{1,1}=\|  D   u_{t}  \|_{L^{2}}^{2}+\|  D   B_{t}  \|_{L^{2}}^{2}+\f{1}{5}F_{0,1}^{5},\\
&G_{1,1}=\f12\|  D ^{2} u_{t}  \|_{L^{2}}^{2}+\f12\| u_{tt}  \|_{L^{2}}^{2}+\f12\|  D ^{2} B_{t}  \|_{L^{2}}^{2}+\f12\| B_{tt}  \|_{L^{2}}^{2}+\f12F_{0,1}^{4}G_{0,1} +\f12F_{1,0}^{\f{4  }{3 }}G_{1,0}.\ea
\ee
  From \eqref{3.6v1}, we know that
 \be\label{3.43v1}\f{d}{dt}F_{0,1}+G_{0,1}\leq CF_{0,1}^{3}.
\ee
The   natural energy estimates   of the MHD equations leads to
  \be\label{3.42}
F_{0,1}=\| D  u  \|_{L^{2}}^{2}+\| D  B  \|_{L^{2}}^{2}\in L^1.\ee
Taking advantage of
Lemma \ref{hofflemma}, we conclude by \eqref{3.43v1} and \eqref{3.42} that
$G_{0,1}\in L^{\f{1}{3 }}$. Hence, we further have
$$
F_{ 1,0}=\| u_{t}  \|_{L^{2}}^{2}+\| B_{t}  \|_{L^{2}}^{2}+\f13(\| D  u  \|_{L^{2}}^{2}+\| D  B \|_{L^{2}}^{2})^{3}\in   L^{\f{ 1 }{ 3 }}.
$$
To deal with the last term on the right hand side of \eqref{3.10v1}, we deduce from  \eqref{3.2v1} that
 \be\ba\label{3.13}
&\f13\f{d}{dt}(\| D  u (t)\|^{2}_{L^{2}}+\| D  B (t)\|^{2}_{L^{2}})^{3}
 + \f32(\| D ^{2}u (t)\|_{L^{2}}^{2}+ \| D ^{2}B (t)\|_{L^{2}}^{2})(\| D  B \|_{L^{2}}^{2}+\| D  u\|_{L^{2}}^{2})^{2}\\
\leq& C(\| D  u (t)\|_{L^{2}}^{2}+\| D  B (t)\|_{L^{2}}^{2})^{5}.
\ea\ee
By combining this and \eqref{3.13v2} and choosing $\eta$ sufficiently small, we discover that
\be\ba\label{3.13v1}
&\f{d}{dt}\B[\| u _{t}\|^{2}_{L^{2}}+\|B _{t}\|^{2}_{L^{2}}+\f13(\| D  u (t)\|_{L^{2}}^{2}+\| D  B (t)\|_{L^{2}}^{2})^{3}\B]\\&+\f12\| D  u _{t}\|_{L^{2}}^{2}+\f12\| D  B _{t}\|_{L^{2}}^{2}
+\f12( \| D  u \|^{2}+\| D  B \|^{2})^{2}(\| D ^{2}u (t)\|_{L^{2}}^{2}+\| D ^{2}B (t)\|_{L^{2}}^{2})\\
\leq &C
(\| D  u (t)\|_{L^{2}}^{2}+\| D  B (t)\|_{L^{2}}^{2})^{5}+C\|u _{t}\|^{\f{10}{3 }}_{L^{2}}+C\|B _{t}\|^{\f{10}{3 }}_{L^{2}}\\
\leq &C\B[\| u _{t}\|^{2}_{L^{2}}+\|B _{t}\|^{2}_{L^{2}}+\f13(\| D  u (t)\|_{L^{2}}^{2}+\| D  B (t)\|_{L^{2}}^{2})^{3}\B]^{\f{  5 }{3 }}.
\ea\ee
We derive from  \eqref{3.13v1}  that
\be\label{3.45}
\f{d}{dt}F_{ 1,0}+G_{1,0}\leq CF_{ 1,0}^{\f{  5 }{3 }}.
\ee
As a consequence, one employs  Lemma \ref{hofflemma} to verify that
$$\ba
G_{1,0}\in L^{\f{1}{ 5}},
\ea
$$
which implies that
 \be\ba\label{3.48}
\| D  u  _{t}\|_{L^{2}}^{2}, \| D  B  _{t}\|_{L^{2}}^{2}\in L^{\f{1}{5}}.
\ea
\ee
According to facts
\eqref{3.42}  and \eqref{3.48}, we get
\be\label{3.49}
F_{1,1}\in L^{\f{1}{5}}.
\ee
By  straightforward calculations, we deduce from
\eqref{3.43v1}   that
\be\ba
\f{1}{5}\f{d}{dt}F_{0,1}^{5}+F_{0,1}^{4}G_{0,1} \leq CF_{0,1}^{7}.
\ea\ee
It follows from the latter inequality and \eqref{3.24v1}  that
\be\ba \label{3.50v1}
&\f{d}{dt}\B[\| D   u _{t}\|^{2}_{L^{2}}+\| D   B _{t}\|^{2}_{L^{2}}+\f{1}{5}F_{0,1}^{5}\B]\\&+\f12\|  D ^{2} u_{t}  \|_{L^{2}}^{2}+\f12\| u_{tt}  \|_{L^{2}}^{2}+\f{1}{2}\| D ^{2 }u _{t}\|_{L^{2}}^{2}+\f{1}{2}\| D ^{2 }B _{t}\|_{L^{2}}^{2}
+ F_{0,1}^{4}G_{0,1} \\
\leq& C( \| D  u_{t}\|_{L^{2}}^{2}+\| D  B_{t}\|_{L^{2}}^{2})^{\f{7}{5}}
+C(\| D  u \|_{L^{2}}^{2}+\| D  B \|_{L^{2}}^{2})^{7}
\\&+8\eta(\| D  u \|^{2}_{L^{2}}+\| D  B \|^{2}_{L^{2}} )^{4}(\| D ^{2 }u \|_{L^{2}}^{2}+\| D ^{2 }B \|_{L^{2}}^{2})+F_{0,1}^{7}.
\ea
\ee
By virtue of the definition of $F_{1,1}$ in \eqref{3.39}, one infers that
$$\ba
&F_{0,1}^{7}\leq CF_{1,1}^{\f{ 7}{5}},\\
&(\| D  u \|^{2}_{L^{2}}+\| D  B \|^{2}_{L^{2}} )^{4}(\| D ^{2 }u \|_{L^{2}}^{2}+\| D ^{2 }B \|_{L^{2}}^{2})\leq  CF_{1,0}^{ 4    }G_{1,0},
\ea$$
which helps us to  rewrite  \eqref{3.50v1} as
\be\ba\label{3.50}
 \f{d}{dt}F_{1,1}+G_{1,1}
\leq C F_{1,1}^{\f{ 7}{5}},
\ea
\ee
where  we have chosen $\eta$   sufficiently small.

Making full use  of
\eqref{3.49} and Lemma \ref{hofflemma}, we deduce from \eqref{3.50} that
$$
G_{1,1} \in L^{\f17},
$$
which yields
$$
\|  D ^{2} u_{t}  \|_{L^{2}}^{2},\| u_{tt}  \|_{L^{2}}^{2},  \|  D ^{2} B_{t}  \|_{L^{2}}^{2},\| B_{tt}  \|_{L^{2}}^{2}\in L^{\f17}.
$$
Hence, we prove \eqref{3.40} for $r=1$.

Next, we assume that \eqref{3.40} is valid for $r\leq k-1$. It is enough to show  \eqref{3.40} for $r=k$. To this end, inductive hypothesis yields
\be\label{3.56}
\left\{\ba
&F_{j,0}\in L^\f{1}{4j   -1}, j\leq k-1,\\
&F_{j,1}\in L^\f{1}{4j + 1}, j\leq k-1,\\
&\| u^{( k) }_{t}\|_{L^{2}}^{2}+\| B^{( k) }_{t}\|_{L^{2}}^{2}\in L^{\f{1}{4k -1}}. \ea\right.\ee
As a result, one has
\be\label{3.57} F_{k,0}=\|u_{t}^{(k)}\|_{L^{2}}^{2}+\|B_{t}^{(k)}\|_{L^{2}}^{2}+\sum_{j=1}^{k-1}\f{4j -1}{4k -1}F_{j,0}^{\f{4k -1}{4j -1}}+\sum_{j=0}^{k-1}\f{4j +1}{4k -1}F_{j,1}^{\f{4k -1}{4j +1}}\in L^{\f{1}{4k -1}}.
 \ee
Inductive hypothesis  \eqref{3.40} for $j\leq k-1$ gives
$$
\f{d}{dt}F_{j,1}+G_{j,1}\leq CF^{\f{4j +2}{4j +1}}_{j,1},\f{d}{dt}F_{j,0}+G_{j,0}\leq CF^{\f{4j +1}{4j -1}}_{r,0}.
$$
Hence, we compute
\be\ba
\f{4j +1}{4k -1}\f{d}{dt}F_{j,1}^{\f{4k -1}{4j +1}}+F_{j,1}^{\f{4(k-j)-2}{4j +1}}G_{j,1}\leq CF^{\f{4k +1}{4j +1}}_{j,1},\\
\f{4j -1}{4k -1}\f{d}{dt}F_{j,0}^{\f{4k -1}{4j -1}}+F_{j,1}^{\f{4(k-j) }{4j -1}}G_{j,0}\leq CF^{\f{4k +1}{4j -1}}_{j,0},
\ea\ee
which together with \eqref{3.31v1} leads to
\be\ba\label{3.57v1}
& \f{d}{dt}\B[\| u _{t}^{(k)}\|^{2}_{L^{2}} +\|B _{t}^{(k)}\|^{2}_{L^{2}}+\sum_{j=1}^{k-1}\f{4j +1}{4k -1}F_{j,1}^{\f{4k -1}{4j +1}}+\sum_{j=1}^{k-1}\f{4j -1}{4k -1}F_{j,0}^{\f{4k -1}{4j -1}}\B]\\&+\f12\| D  u _{t}^{(k)}\|_{L^{2}}^{2} +\f12\| D  B _{t}^{(k)}\|_{L^{2}}^{2} +\sum_{j=1}^{k-1}F_{j,1}^{\f{4(r-j)-2}{4j +1}}G_{j,1} +\sum_{j=1}^{k-1}F_{j,1}^{\f{4(r-j) }{4j -1}}G_{j,0}\\
\leq& C(\|u_{t}^{(k)}\|^{2}+\|B_{t}^{(k)}\|^{2})
^{\f{  4k +1 }{4k -1}}
_{L^{2}}
+C\sum_{j=1}^{k-1}(\|u_{t}^{(j)}\|_{L^{2}}^{2}+\|B_{t}^{(j)}\|_{L^{2}}^{2})
^{\f{ 4k +1 }{4j -1}}
\\&+8\eta\sum_{j=1}^{k-1}(\| D  u_{t}^{( j)} \|_{L^{2}}^{2}+\| D  B_{t}^{( j)} \|_{L^{2}}^{2})^{\f{2 (2(k-j)-1)}{  4j +1}}
(\| D ^{2}u_{t}^{(j)} \|^{2}_{L^{2}}+\| D ^{2}B_{t}^{(j)} \|^{2}_{L^{2}})\\&+C\sum_{j=0}^{k-1}(\| D  u_{t}^{( j)} \|_{L^{2}}^{2}+\| D  B_{t}^{( j)} \|_{L^{2}}^{2})^{\f{ 4k +1 }{4j +1}}+\sum_{j=1}^{k-1}F^{\f{4k +1}{4j +1}}_{j,1}+\sum_{j=1}^{k-1}F^{\f{4k +1}{4j -1}}_{j,0}.
\ea
\ee
In view  of definition of $F_{j,0}$ and  $F_{j,1}$ in \eqref{3.39}, one has
$$\ba
&\| D  u_{t}^{( j)} \|_{L^{2}}+ \| D  B_{t}^{( j)} \|_{L^{2}}\leq CF_{j,1}\\
&\|  u_{t}^{( j)} \|_{L^{2}}+ \|  B_{t}^{( j)} \|_{L^{2}}\leq CF_{j,0}.\ea$$ In addition, we derive from \eqref{3.52} that
$$(\| D  u_{t}^{( j)} \|_{L^{2}}^{2}+\| D  B_{t}^{( j)} \|_{L^{2}}^{2})^{\f{2 (2(k-j)-1)}{  4j +1}}(\| D ^{2}u_{t}^{(j)} \|^{2}_{L^{2}}+\| D ^{2}B_{t}^{(j)} \|^{2}_{L^{2}})
\leq CF_{j,1}^{\f{4(k-j)-2}{4j +1}}G_{j,1}.$$
Consequencely, taking $\eta$ small, we reformulate \eqref{3.57v1}
as
\be\ba\label{3.70}
& \f{d}{dt}\B[\| u _{t}^{(k)}\|^{2}_{L^{2}}+\| B _{t}^{(k)}\|^{2}_{L^{2}}+\sum_{j=1}^{k-1}\f{4j +1}{4k -1}F_{j,1}^{\f{4k -1}{4j +1}}+\sum_{j=1}^{k-1}\f{4j -1}{4k -1}F_{j,0}^{\f{4k -1}{4j -1}}\B]\\&+\f12\| D  u _{t}^{(k)}\|_{L^{2}}^{2} +\f12\| D  B _{t}^{(k)}\|_{L^{2}}^{2} +\f12F_{j,1}^{\f{4(r-j)-2}{4j +1}}G_{j,1} +\f12F_{j,1}^{\f{4(r-j) }{4j -1}}G_{j,0}\\
\leq& C(\|u_{t}^{(k)}\|^{2}+\|B_{t}^{(k)}\|^{2})
^{\f{ (4k +1)}{4k -1}}
_{L^{2}}
  +C\sum_{j=0}^{k-1}F^{\f{4k +1}{4j +1}}_{j,1}+C\sum_{j=1}^{k-1}F^{\f{4k +1}{4j -1}}_{j,0},
\ea
\ee
On account of   \eqref{3.57}, we write
$$
C\sum_{j=0}^{k-1}F^{\f{4k +1}{4j +1}}_{j,1}\leq CF_{k,0}^{\f{  4k +1 }{4k -1}},~~
C\sum_{j=1}^{k-1}F^{\f{4k +1}{4j -1}}_{j,0}\leq CF_{k,0}^{\f{  4k +1 }{4k -1}}.
 $$
 As a consequence, we further deduce from \eqref{3.70} that
$$
\f{d}{dt}F_{k,0}+G_{k,0}\leq CF_{k,0}^{\f{  4k +1 }{4k -1}}.
$$
With the help of  Lemma \ref{hofflemma}, we combine  this and \eqref{3.57}  to get
$$
G_{k,0}\in L^{\f{1}{4k +6\alpha  -5}},
$$
which means
\be\label{3.71}
\| D  u _{t}^{(k)}\|_{L^{2}}^{2}, \| D  B _{t}^{(k)}\|_{L^{2}}^{2} \in L^{\f{1}{4k +6\alpha  -5}}.\ee
This verifies $\eqref{3.40}_{1}$-$\eqref{3.40}_{3}$ for $r=k $. We turn our attentions to proving  $\eqref{3.40}_{4}$-$\eqref{3.40}_{6}$ for $r=k $.
To do this, we deduce from \eqref{3.71} and \eqref{3.56} that
\be
F_{k,1}=\| D  u_{t}^{(k)}\|_{L^{2}}^{2}+\| D  B_{t}^{(k)}\|_{L^{2}}^{2}+\sum_{j=0}^{k-1}\f{4j +1}{4k +1}F_{j,1}^{\f{4k +1}{4j +1}}\in L^{\f{1}{4k +6\alpha  -5}}.\ee
Since
\eqref{3.40} is valid for $r\leq k-1$, one arrives at
$$
\f{d}{dt}F_{j,1}+G_{j,1}\leq CF^{\f{4j +2}{4j +1}}_{j,1},\f{d}{dt}F_{r,0}+G_{r,0}\leq CF^{\f{4r +1}{4r -1}}_{r,0},
$$
which implies
\be\ba\label{3.63}
\f{4j +1}{4k +1}\f{d}{dt}F_{j,1}^{\f{4k +1}{4j +1}}+F_{j,1}^{\f{4(k-j) }{4j +1}}G_{j,1}\leq CF^{\f{4k +3 }{4j +1}}_{j,1}.
\ea\ee
It is apparent from  \eqref{3.52} that
$$\ba
&\sum_{j>0}(\| D  u_{t}^{(r-j)} \|^{2}_{L^{2}}+\| D  B_{t}^{(r-j)} \|^{2}_{L^{2}})^{\f{4  j}{  4(r-j)+1}}(
\| D ^{2}u_{t}^{(r-j)} \|^{2}_{L^{2}}+
\| D ^{2}B_{t}^{(r-j)} \|^{2}_{L^{2}})\leq C\sum_{j=0}^{k-1}F_{j,1}^{\f{4(k-j) }{4j +1}}G_{j,1}.
\ea$$
It follows from \eqref{3.38v1}  and \eqref{3.63}   that
\be\ba
& \f{d}{dt}\B[\|  D  u _{t}^{(k)}\|^{2}_{L^{2}}+\|  D  B _{t}^{(k)}\|^{2}_{L^{2}}+\sum_{j=0}^{k-1}\f{4j +1}{4k +1}F_{j,1}^{\f{4k +1}{4j +1}}\B]\\&+\f12\| D ^{2 }u _{t}^{(k)}\|_{L^{2}}^{2} +\f12\| D ^{2 }B _{t}^{(k)}\|_{L^{2}}^{2} +\f12\| u _{t}^{(k+1)}\|_{L^{2}}^{2}+\f12\| B _{t}^{(k+1)}\|_{L^{2}}^{2}+\f12\sum_{j=0}^{k-1}F_{j,1}^{\f{4(k-j) }{4j +1}}G_{j,1}\\ \leq& C(\| D  u_{t}^{(k )}  \|_{L^{2}}^{2}+\| D  B_{t}^{(k )}  \|_{L^{2}}^{2})^{\f{ 4k +3  }{ 4k  +1}}+C\sum_{j=0}^{k-1}F_{j,1}^{ \f{4 k +3 }{4j +1}},
\ea
\ee
where we have taken $\eta$ sufficiently small in \eqref{3.38v1}.\\
 One derives from  \eqref{3.52} that
  $$\sum_{j=0}^{k-1}F_{j,1}^{ \f{4 k +3 }{4j +1}}
\leq CF_{k,1}^{\f{  4k +3  }{4k +1}}.$$
Finally, we arrive at
$$
\f{d}{dt}F_{k,1}+G_{k,1}\leq CF_{k,1}^{\f{  4k +3  }{4k +1}}.
$$
In view of $
F_{k,1} \in L^{\f{1}{4k +6\alpha  -5}} $ and Lemma \ref{hofflemma}, we find $
G_{k,1} \in L^{\f{1}{4k +3 }}. $ We end up with $\| u _{t}^{(k+1)}\|_{L^{2}}^{2}, \| u _{t}^{(k+1)}\|_{L^{2}}^{2}\in L^{\f{1}{4k +3 }}$. Hence, we prove $\eqref{3.40}$ for $r=k$. At this stage, the proof of Theorem \ref{the1.2} is completed.
\end{proof}
\section{Space-time derivative estimates for the
magnetohydrodynamic equations}
\begin{lemma}\label{lemma4.1}Let the pair $(u, B)$  be a classical solution to the MHD system \eqref{MHD}. Then for each $t >0$ and $\eta >0$,  there holds
\be\ba\label{4.13v3}
& \f{d}{dt} (\|  D ^{s}u _{t}^{(r)}\|^{2}_{L^{2}}+\|  D ^{s}B _{t}^{(r)}\|^{2}_{L^{2}})+\|  D ^{s+1}u _{t}^{(r)}\|^{2}_{L^{2}}+\| D ^{s+1}B _{t}^{(r)}\|^{2}_{L^{2}}\\
\leq &  C\sum_{j }^{r-1}(\| D ^{ s }B_{t}^{(j)}\|_{L^{2}}^{2}+\| D ^{ s }u_{t}^{(j)}\|_{L^{2}}^{2})
^{\f{ 4r +2(s+1)-1 }{4j +2s -1}}+  C(\| D ^{ s  }u_{t}^{(r)}\|_{L^{2}}+\| D ^{ s  }B_{t}^{(r)}\|_{L^{2}})
^{\f{  4r +2(s+1)-1 }
{4r +2s-1}}
 \\&+C\sum_{j=0 }^{r}
(\| D   u_{t}^{(j)}\|_{L^{2}}^{2}+\| D   B_{t}^{(j)}\|_{L^{2}}^{2})^{\f{ 4r +2(s+1)-1 )}{4j  +1}}
 \\&+4\eta\sum_{j>0 }^{r}(\| D ^{s}u^{(r-j )}_{t}\|_{L^{2}}^{2}+\| D ^{s}B^{(r-j )}_{t}\|_{L^{2}}^{2})^{\f{4  j}{ 4(r-j)+2 s -1}}(\| D ^{s+1} u^{(r-j )}_{t}\|_{L^{2}}^{2}+\| D ^{s+1} B^{(r-j )}_{t}\|_{L^{2}}^{2} ),
\ea\ee
and
\be\ba\label{4.15v1}
& \f{1}{2}\f{d}{dt} (\|  D ^{s}u \|^{2}_{L^{2}}+\|  D ^{s}B \|^{2}_{L^{2}})+\f12\|  D ^{s+1}u \|^{2}_{L^{2}}+\f12\|  D ^{s+1}B \|^{2}_{L^{2}}\\
 \leq & C(\| D ^{ s }u \|_{L^{2}}^{2}+\| D ^{ s }B \|_{L^{2}}^{2})
^{\f{   2(s+1)-1 }
{ 2s -1}} +C(
\| D  u \|_{L^{2}}^{2}+
\| D  B \|_{L^{2}}^{2})^{   2(s+1)-1  }.
\ea
\ee
\end{lemma}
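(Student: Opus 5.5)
We apply $D^{s}$ to the differentiated system \eqref{3.26} and take the $L^{2}$ inner product with $D^{s}u_{t}^{(r)}$ and $D^{s}B_{t}^{(r)}$ respectively; equivalently, we test \eqref{3.26} against $(-1)^{s}D^{2s}u_{t}^{(r)}$ and $(-1)^{s}D^{2s}B_{t}^{(r)}$. The pressure term drops by incompressibility. Summing gives
\[
\f12\f{d}{dt}\big(\|D^{s}u_{t}^{(r)}\|_{L^{2}}^{2}+\|D^{s}B_{t}^{(r)}\|_{L^{2}}^{2}\big)+\|D^{s+1}u_{t}^{(r)}\|_{L^{2}}^{2}+\|D^{s+1}B_{t}^{(r)}\|_{L^{2}}^{2}=\sum_{j=0}^{r}\binom{r}{j}\,I_{j},
\]
where each $I_{j}$ collects the four trilinear terms of the form $\int D^{s}(f_{t}^{(j)}\cdot\nabla g_{t}^{(r-j)})\cdot D^{s}h_{t}^{(r)}\,dx$ with $f,g,h\in\{u,B\}$.

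The first step is to isolate the terms where all $s$ derivatives and no time derivative fall on the transport part. In the $j=0$ block, write $D^{s}(u\cdot\nabla g_{t}^{(r)})=u\cdot\nabla D^{s}g_{t}^{(r)}+[D^{s},u\cdot\nabla]g_{t}^{(r)}$, and similarly with $B$. The principal parts cancel exactly as in the proof of Lemma \ref{lema3.2}, since $\Div u=\Div B=0$:
\[
\int u\cdot\nabla D^{s}u_{t}^{(r)}\cdot D^{s}u_{t}^{(r)}=0,\qquad \int B\cdot\nabla D^{s}B_{t}^{(r)}\cdot D^{s}u_{t}^{(r)}+\int B\cdot\nabla D^{s}u_{t}^{(r)}\cdot D^{s}B_{t}^{(r)}=0.
\]
This cancellation is what keeps every remaining term below the dissipation level. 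We then move one derivative onto the test function, $|I|\le \|D^{s-1}(f\cdot\nabla g)\|_{L^{2}}\|D^{s+1}h_{t}^{(r)}\|_{L^{2}}$, which is admissible because the commutator part no longer carries $s+1$ derivatives on a single factor.

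Next we bound $\|D^{s-1}(f_{t}^{(j)}\otimes g_{t}^{(r-j)})\|_{\dot H^{1}}$ (using divergence-free form) by the fractional Leibniz rule \eqref{fraleibnilaw}. The two factors are placed in $L^{6}\times L^{3}$ or $L^{3}\times L^{6}$, and each is interpolated between $\|D u_{t}^{(i)}\|_{L^{2}}$ and $\|D^{s}u_{t}^{(i)}\|_{L^{2}}$ or $\|D^{s+1}u_{t}^{(i)}\|_{L^{2}}$, in the spirit of \eqref{keyinequality}. Young's inequality then gives the stated powers. The exponents are dictated by scaling: under the MHD scaling, $\|D^{s}u_{t}^{(j)}\|_{L^{2}}^{2}$ has weight $4j+2s-1$ and the dissipation has weight $4r+2s+1$, so the powers $\f{4r+2(s+1)-1}{4j+2s-1}$ and $\f{4r+2s+1}{4j+1}$ are forced. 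One also absorbs $\varepsilon\|D^{s+1}h_{t}^{(r)}\|^{2}$ into the left side.

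Terms in which a lower time index $r-j$, with $j>0$, carries the top derivative $D^{s+1}$ cannot be absorbed. For these we use Young with a weight to produce $\eta\|D^{s}g_{t}^{(r-j)}\|^{\f{8j}{4(r-j)+2s-1}}\|D^{s+1}g_{t}^{(r-j)}\|^{2}$, mirroring \eqref{3.36} and \eqref{3.40v2}. This gives \eqref{4.13v3}.

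The estimate \eqref{4.15v1} is the case $r=0$ and is simpler, since only the commutator terms survive. Each is bounded by $\|Du\|^{a}\|D^{s}u\|^{b}\|D^{s+1}u\|^{c}$ with $c<2$, and Young's inequality yields the two displayed powers.

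The main obstacle is the bookkeeping of interpolation exponents in the Leibniz step for general $s$ and $j$. One must check that the intermediate norms $\|D^{m}f_{t}^{(j)}\|$ with $1<m<s$ interpolate into exactly the stated powers, and that the homogeneity balances. The most delicate case is the endpoint $s=1$, where the Kato--Ponce lemma degenerates; there we would fall back on the direct Hölder argument already used in Lemma \ref{lema3.2}.
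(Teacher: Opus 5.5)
Your outline follows the paper's skeleton: test with $D^{s}u_t^{(r)}$ and $D^{s}B_t^{(r)}$, cancel the top-order transport terms, apply Leibniz, interpolate, and use Young with $\varepsilon$ and $\eta$. The gap is the step that is supposed to generate the $\eta$-terms; with the splitting you choose it does not work. After integrating by parts and writing $f\cdot\nabla g=\nabla\cdot(f\otimes g)$, the rule \eqref{fraleibnilaw} puts all $s$ derivatives on one factor. Of your two options only $L^{3}\times L^{6}$ is usable, because $L^{6}\times L^{3}$ puts the undifferentiated factor in $L^{3}$, which the $\dot H^{1}$, $\dot H^{s}$, $\dot H^{s+1}$ norms in \eqref{4.13v3} do not control. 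So every piece contains $\|D^{s}g_t^{(m)}\|_{L^{3}}\lesssim\|D^{s}g_t^{(m)}\|_{L^{2}}^{1/2}\|D^{s+1}g_t^{(m)}\|_{L^{2}}^{1/2}$, with $m$ ranging over all of $0,\dots,r$. After absorbing $\varepsilon\|D^{s+1}h_t^{(r)}\|_{L^{2}}^{2}$ you are left with $YZX^{2}$, where $Y=\|D^{s}g_t^{(m)}\|_{L^{2}}$, $Z=\|D^{s+1}g_t^{(m)}\|_{L^{2}}$ and $X=\|Df_t^{(r-m)}\|_{L^{2}}$. Scaling fixes the target powers, but Young's inequality also needs nonnegative convex weights. $Z$ appears on the right of \eqref{4.13v3} only through $\eta(Y^{2})^{a}Z^{2}$ with $a=\frac{4(r-m)}{4m+2s-1}$. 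Half of the weight must therefore go to that term, which already uses $Y^{a}$, and this is possible only if $a\le 1$.

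In your scheme this fails already in the $j=0$ block, through $\|D^{s}u\|_{L^{3}}\|u_t^{(r)}\|_{L^{6}}$ (so $m=0$), as soon as $4r>2s-1$. For $r=1$, $s=2$ you would need $\|D^{2}u\|\,\|D^{3}u\|\,\|Du_t\|^{2}\le C(\cdots)+\eta\|D^{2}u\|^{8/3}\|D^{3}u\|^{2}$. This is false: take $\|Du_t\|=1$, $\|D^{2}u\|=\delta$, $\|D^{3}u\|=\delta^{-4/3}$ and all other quantities bounded. The left side is then $\delta^{-1/3}\to\infty$ while every admissible term on the right stays bounded. The cancellation you isolated does not help: once you integrate by parts and expand the whole product, it plays no role.

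The paper avoids this at $j=0$ by \emph{not} integrating by parts there. It bounds the commutator with \eqref{katoponce} and pairs it with $\|D^{s}h_t^{(r)}\|_{L^{6}}\lesssim\|D^{s+1}h_t^{(r)}\|_{L^{2}}$, so $u$ carries at most $s$ derivatives. However, its $j\ge1$ step \eqref{4.8}--\eqref{4.13v2} has exactly your obstruction at $j=r$; for $s=2$ it requires $8r\le 4r+3$. The same is true of \eqref{3.40v2}, which you plan to reuse for $s=1$.

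The missing idea is to keep the excess half derivative off the factor with the lower time index. For $s\ge 2$, use $L^{2}\times L^{\infty}$ instead of $L^{3}\times L^{6}$:
$\|\Lambda^{s-1}(f\,\nabla g)\|_{L^{2}}\lesssim\|\Lambda^{s-1}f\|_{L^{6}}\|\nabla g\|_{L^{3}}+\|f\|_{L^{\infty}}\|D^{s}g\|_{L^{2}}$.
Combine this with $\|\Lambda^{s-1}f\|_{L^{6}}\lesssim\|D^{s}f\|_{L^{2}}$ and with $\|w\|_{L^{\infty}}+\|\nabla w\|_{L^{3}}\lesssim\|Dw\|_{L^{2}}^{1-\theta}\|D^{s}w\|_{L^{2}}^{\theta}$, where $\theta=\frac{1}{2(s-1)}$; the latter has the same Bernstein proof as \eqref{keyinequality}. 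Then no $D^{s+1}$ of a lower-index factor appears, and scale-consistent Young gives \eqref{4.13v3}, without even needing the $\eta$-terms. For $s=1$, put the $L^{\infty}$ (or the $L^{3}$ on the gradient) on the factor with the larger time index, which makes $a<1$.
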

\begin{proof}
(1)
Applying $ D ^{s}\partial^{r}_{t} $ to the MHD equations \eqref{MHD}, we conclude by Leibniz rule that
\be\ba\label{4.1}  D ^{s} u^{(r+1) }_{t}-\Delta D ^{s}u^{(r)}_{t}+\sum_{j=0}^{r}\binom{r}{j} D ^{s}(u_{t}^{(j)}\cdot\nabla u^{(r-j)}_{t})- \sum_{j=0}^{r}\binom{r}{j} D ^{s}(B_{t}^{(j)}\cdot\nabla B^{(r-j)}_{t}) +\nabla  D ^{s}p^{(r)}_{t}=0,\\
 D ^{s} B^{(r+1) }_{t}-\Delta D ^{s}B^{(r)}_{t}+\sum_{j=0}^{r}\binom{r}{j} D ^{s}(u_{t}^{(j)}\cdot\nabla B^{(r-j)}_{t})- \sum_{j=0}^{r}\binom{r}{j} D ^{s}(B_{t}^{(j)}\cdot\nabla u^{(r-j)}_{t})  =0.
\ea
\ee
We multiply the above equations by  $ D ^{s}u _{t}^{(r)}$ and $ D ^{s}B _{t}^{(r)}$, sum and integrate to
obtain
\be\ba\label{4.2}
&\f{1}{2}\f{d}{dt} (\|  D ^{s}u _{t}^{(r)}\|^{2}_{L^{2}}+\|  D ^{s}B _{t}^{(r)}\|^{2}_{L^{2}})+\|  D ^{s+1}u _{t}^{(r)}\|^{2}_{L^{2}}+\| D ^{s+1}B _{t}^{(r)}\|^{2}_{L^{2}}\\
=&- \sum_{j=0}^{r}\binom{r}{j}\int_{\mathbb{R}^{3}}D ^{s}(u_{t}^{(j)}\cdot\nabla u^{(r-j)}_{t})\cdot  D ^{s}u _{t}^{(r)}dx+\sum_{j=0}^{r}\binom{r}{j}\int_{\mathbb{R}^{3}}D ^{s}(B_{t}^{(j)}\cdot\nabla B^{(r-j)}_{t})\cdot  D ^{s}u _{t}^{(r)}dx\\&- \sum_{j=0}^{r}\binom{r}{j}\int_{\mathbb{R}^{3}}D ^{s}(u_{t}^{(j)}\cdot\nabla B^{(r-j)}_{t})\cdot  D ^{s}B _{t}^{(r)}dx+\sum_{j=0}^{r}\binom{r}{j}\int_{\mathbb{R}^{3}}D ^{s}(B_{t}^{(j)}\cdot\nabla u^{(r-j)}_{t})\cdot  D ^{s}B_{t}^{(r)}dx\\
=&I+II+III+IV.
\ea
\ee
In the light of the incompressible condition, we conclude by integration by parts that
\be\ba \int_{\mathbb{R}^{3}}D ^{s}(u \cdot\nabla u^{(r )}_{t})\cdot D ^{s}u _{t}^{(r)}dx=\int_{\mathbb{R}^{3}}[ D ^{s}(u \cdot\nabla u^{(r )}_{t})-u \cdot D ^{s}\nabla u^{(r )}_{t} ]\cdot D ^{s}u _{t}^{(r)}dx,
\ea
\ee
which means
\be\ba
I=&-\int_{\mathbb{R}^{3}}[ D ^{s}(u \cdot\nabla u^{(r )}_{t})-u \cdot\nabla  D ^{s}u^{(r )}_{t}]\cdot  D ^{s}u _{t}^{(r)}dx\\&-\sum_{j>0}\binom{r}{j}\int_{\mathbb{R}^{3}}D ^{s}(u_{t}^{(j)}\cdot\nabla u^{(r-j)}_{t})\cdot  D ^{s}u _{t}^{(r)}dx.
\ea
\ee
Following the same path, we can show that
\be\ba
III= &-\int_{\mathbb{R}^{3}}D ^{s}(u \cdot\nabla B^{(r )}_{t})\cdot  D ^{s}B _{t}^{(r)}dx-\sum_{j>0}\binom{r}{j}\int_{\mathbb{R}^{3}} D ^{s}(u_{t}^{(j)}\cdot\nabla B^{(r-j)}_{t})\cdot  D ^{s}B _{t}^{(r)}dx\\
=&-\int_{\mathbb{R}^{3}}[ D ^{s}(u \cdot\nabla B^{(r )}_{t})-u \cdot\nabla  D ^{s}B^{(r )}_{t}]\cdot  D ^{s}B _{t}^{(r)}dx\\&-\sum_{j>0}\binom{r}{j}\int_{\mathbb{R}^{3}}D ^{s}(u_{t}^{(j)}\cdot\nabla B^{(r-j)}_{t})\cdot  D ^{s}B _{t}^{(r)}dx.
\ea
\ee
Integrating by parts, we infer that
$$\int_{\mathbb{R}^{3}}B \cdot\nabla  D ^{s}B^{(r )}_{t} \cdot  D ^{s}u _{t}^{(r)}dx+\int_{\mathbb{R}^{3}}B \cdot\nabla  D ^{s}u^{(r )}_{t} \cdot  D ^{s}B _{t}^{(r)}dx=0,$$
from which it follows that
$$\ba
&II+IV\\
=&
\int_{\mathbb{R}^{3}}[ D ^{s}(B \cdot\nabla B^{(r )}_{t})-B \cdot\nabla  D ^{s}B^{(r )}_{t}]\cdot  D ^{s}u _{t}^{(r)}dx+\sum_{j>0}\binom{r}{j}\int_{\mathbb{R}^{3}}D ^{s}(B_{t}^{(j)}\cdot\nabla B^{(r-j)}_{t})\cdot  D ^{s}u _{t}^{(r)}dx\\&
+\int_{\mathbb{R}^{3}}[ D ^{s}(B \cdot\nabla u^{(r )}_{t})-B \cdot\nabla  D ^{s}u^{(r )}_{t}]\cdot  D ^{s}B _{t}^{(r)}dx+\sum_{j>0}\binom{r}{j}\int_{\mathbb{R}^{3}}D ^{s}(B_{t}^{(j)}\cdot\nabla u^{(r-j)}_{t})\cdot  D ^{s}B _{t}^{(r)}dx.
\ea
$$
By means of the H\"older inequality and Kato-Ponce type     commutator estimate \eqref{katoponce}, one arrives at
\be\ba\label{3.5} & \B|\int_{\mathbb{R}^{3}}[ D ^{s}(u \cdot\nabla u^{(r )}_{t})-u \cdot\nabla D ^{s}u^{(r )}_{t}]\cdot D ^{s}u _{t}^{(r)}dx\B|\\
\leq& (\| D ^{s}u\|_{L^{2}}\|\nabla u^{(r )}_{t}\|_{L^{3}}+\|\nabla u\|_{L^{3}}\| D ^{s} u^{(r )}_{t}\|_{L^{2}})\| D ^{s}u _{t}^{(r)}\|_{L^{6}}.
\ea
\ee
Interpolation inequality \eqref{keyinequality} ensures that
$$\ba
&\|\nabla u^{(r )}_{t}\|_{L^{3}}\leq C\| D  u^{(r )}_{t}\|^{\f{2(s+1) -5 }{2(s-1)}}_{L^{2}}\| D ^{s} u^{(r )}_{t}\|_{L^{2}}^{\f{1}{2(s-1)}},\\
&\|\nabla u \|_{L^{3}}\leq C\| D  u \|^{\f{2(s+1) -5 }{2(s-1)}}_{L^{2}}\| D ^{s} u \|_{L^{2}}^{\f{1}{2(s-1)}}.
\ea
$$
Plugging  the previous inequalities into \eqref{3.5}, we derive from Young inequality  that
\be\ba\label{4.7}
&\B|\int_{\mathbb{R}^{3}}[ D ^{s}(u \cdot\nabla u^{(r )}_{t})-u \cdot\nabla  D ^{s}u^{(r )}_{t}]\cdot D ^{s}u _{t}^{(r)}dx\B|\\
\leq& C\| D ^{s}u\|_{L^{2}}\| D  u^{(r )}_{t}\|^{\f{2(s+1) -5 }{2(s-1)}}_{L^{2}}\| D ^{s} u^{(r )}_{t}\|_{L^{2}}^{\f{1}{2(s-1)}}\| D ^{s+1}u _{t}^{(r)}\|_{L^{2}}\\&+C\| D  u \|^{\f{2(s+1) -5 }{2(s-1)}}_{L^{2}}\| D ^{s} u \|_{L^{2}}^{\f{1}{2(s-1)}}\| D ^{s} u^{(r )}_{t}\|_{L^{2}} \| D ^{s+1}u _{t}^{(r)}\|_{L^{2}}\\
\leq& C\| D ^{s}u\|_{L^{2}}^{2}\| D  u^{(r )}_{t}\|^{\f{2(s+1) -5 }{ (s-1)}}_{L^{2}}\| D ^{s} u^{(r )}_{t}\|_{L^{2}}^{\f{1}{ (s-1)}} \\&+C\| D ^{s} u^{(r )}_{t}\|^{2}_{L^{2}}\| D  u \|^{\f{2(s+1) -5 }{ (s-1)}}_{L^{2}}\| D ^{s} u \|_{L^{2}}^{\f{1}{ (s-1)}} +\varepsilon\| D ^{s+1}u _{t}^{(r)}\|^{2}_{L^{2}},
\ea
\ee
where $\varepsilon$ will be fixed below.\\
Using the H\"older inequality, Kato-Ponce type    Leibniz rules \eqref{fraleibnilaw} and the Young  inequality, one infers that
\be\ba\label{4.8}
 &\B|\sum_{j>0}\binom{r}{j}\int_{\mathbb{R}^{3}}D ^{s}(u_{t}^{(j)}\cdot\nabla u^{(r-j)}_{t})\cdot D ^{s}u _{t}^{(r)}dx\B|\\
  = & \B|\binom{r}{j}\sum_{j>0}\int_{\mathbb{R}^{3}}D ^{(s-1)}(u_{t}^{(j)}\cdot\nabla u^{(r-j)}_{t})\cdot D ^{s+1}u _{t}^{(r)}dx\B|\\
 \leq & C \sum_{j>0}\| D ^{(s-1)}u_{t}^{(j)}\|_{L^{6}}\|\nabla u^{(r-j)}_{t}\|_{L^{3}}\| D ^{s+1}u _{t}^{(r)}\|_{L^{2}}
\\& +C\sum_{j>0}\|u_{t}^{(j)}\|_{L^{6}}\| D ^{(s-1)}\nabla u^{(r-j)}_{t}\|_{L^{3}}\| D ^{s+1}u _{t}^{(r)}\|_{L^{2}}
\\
 \leq & C\sum_{j>0}\| D ^{ s }u_{t}^{(j)}\|_{L^{2}}\| D  u^{(r-j)}_{t} \|^{\f{2(s+1) -5 }{2(s-1)}}_{L^{2}}\| D ^{s} u^{(r-j)}_{t} \|_{L^{2}}^{\f{1}{2(s-1)}}\| D ^{s+1}u _{t}^{(r)}\|_{L^{2}}
 \\&+C\sum_{j>0}\| D   u_{t}^{(j)}\|_{L^{2}}\| D ^{s}u^{(r-j )}_{t}\|^{\f{2(s+1) -5 }{2(s-1)}}_{L^{2}}\| D ^{s+1} u^{(r-j )}_{t}\|_{L^{2}}^{\f{1}{2(s-1)}} \| D ^{s+1}u _{t}^{(r)}\|_{L^{2}}\\
 \leq & C\sum_{j>0}\| D ^{ s }u_{t}^{(j)}\|_{L^{2}}^{2}\| D  u^{(r-j)}_{t} \|^{\f{2(s+1) -5 }{ (s-1)}}_{L^{2}}\| D ^{s} u^{(r-j)}_{t} \|_{L^{2}}^{\f{1}{ (s-1)}}
 \\&+C\sum_{j>0}\| D   u_{t}^{(j)}\|^{2}_{L^{2}}\| D ^{s}u^{(r-j )}_{t}\|^{\f{2(s+1) -5 }{ (s-1)}}_{L^{2}}\| D ^{s+1} u^{(r-j )}_{t}\|_{L^{2}}^{\f{1}{ (s-1)}} +\varepsilon\| D ^{s+1}u _{t}^{(r)}\|_{L^{2}}^{2}.
 \ea\ee
A combination of \eqref{4.7} and \eqref{4.8} ensures that
 \be\ba\label{4.11v2}
&\B|-\sum_{j}^{r}\binom{r}{j}\int_{\mathbb{R}^{3}}D ^{s}(u_{t}^{(j)}\cdot\nabla u^{(r-j)}_{t})\cdot D ^{s}u _{t}^{(r)}dx\B|
\\
\leq& C\sum_{j }^{r}\| D ^{ s }u_{t}^{(j)}\|_{L^{2}}^{2}\| D  u^{(r-j)}_{t} \|^{\f{2(s+1) -5 }{ (s-1)}}_{L^{2}}\| D ^{s} u^{(r-j)}_{t} \|_{L^{2}}^{\f{1}{ (s-1)}}
 \\&+C\sum_{j>0 }^{r}\| D   u_{t}^{(j)}\|^{2}_{L^{2}}\| D ^{s}u^{(r-j )}_{t}\|^{\f{2(s+1) -5 }{ (s-1)}}_{L^{2}}\| D ^{s+1} u^{(r-j )}_{t}\|_{L^{2}}^{\f{1}{ (s-1)}} +\varepsilon\| D ^{s+1}u _{t}^{(r)}\|_{L^{2}}^{2}.
\ea\ee
According to the Young inequality, we obtain
\be\ba\label{4.12v2}
&\sum_{j }^{r}\| D ^{ s }u_{t}^{(j)}\|_{L^{2}}^{2}\| D  u^{(r-j)}_{t} \|^{\f{2(s+1) -5 }{ (s-1)}}_{L^{2}}\| D ^{s} u^{(r-j)}_{t} \|_{L^{2}}^{\f{1}{ (s-1)}}\\
\leq& C\| D ^{ s }u_{t}^{(j)}\|_{L^{2}}
^{\f{2(4r +2(s+1)-1)}{4j +2s -1}}+C
\| D  u^{(r-j)}_{t} \|^{\f{2(4r +2(s+1)-1)}{4(r-j)+2 -1}}_{L^{2}}\\&+C
\| D ^{s}u^{(r-j)}_{t} \|^{\f{2(4r +2(s+1)-1)}{4(r-j)+2 s -1}}_{L^{2}},
\ea\ee
and
\be\ba\label{4.13v2}
&\sum_{j>0 }^{r}\| D   u_{t}^{(j)}\|^{2}_{L^{2}}\| D ^{s}u^{(r-j )}_{t}\|^{\f{2(s+1) -5 }{ (s-1)}}_{L^{2}}\| D ^{s+1} u^{(r-j )}_{t}\|_{L^{2}
}^{\f{1}{ (s-1)}}\\
\leq&C\sum_{j>0 }^{r}
\| D   u_{t}^{(j)}\|^{\f{2(4r +2(s+1)-1)}{4j  +1}}_{L^{2}}
+C\sum_{j>0 }^{r}\| D ^{s}u^{(r-j )}_{t}\|^{\f{2(4r +2(s+1)-1)}{ 4(r-j)+2 s -1}}_{L^{2}}\\&+\f\eta4\sum_{j>0 }^{r}\| D ^{s}u^{(r-j )}_{t}\|^{\f{8  j}{ 4(r-j)+2 s -1}}_{L^{2}}\| D ^{s+1} u^{(r-j )}_{t}\|_{L^{2}}^{2},
\ea\ee
where $\eta$ will be determined later.\\
Thus, plugging \eqref{4.12v2} and \eqref{4.13v2} into   \eqref{4.11v2}, one infers that
\be\ba\label{4.14v2}
 \B|I\B|
 \leq&C\sum_{j }^{r-1} \| D ^{ s \alpha}u_{t}^{(j)}\|_{L^{2}}
^{\f{2(4r +2(s+1)-1)}
{4j +2s-1}}+C\| D ^{ s  }u_{t}^{(r)}\|_{L^{2}}
^{\f{2(4r +2(s+1)-1)}
{4r +2s-1}} +C\sum_{j }^{r}
\| D  u^{(r-j)}_{t} \|^{\f{2(4r +2(s+1)-1)}{4(r-j) +1}}_{L^{2}}\\&+ \eta\sum_{j>0 }^{r}\| D ^{s }u^{(r-j )}_{t}\|^{\f{8  j}{ 4(r-j) +2 s -1}}_{L^{2}}\| D ^{(s+1) } u^{(r-j )}_{t}\|_{L^{2}}^{2}+\varepsilon\| D ^{s+1}u _{t}^{(r)}\|_{L^{2}}^{2}.
\ea\ee
Along the same line of above derivation, one obtains
\be\ba\label{4.15v2}
& \B|II\B|+\B|III\B|+\B|IV\B|\\
\leq &  C\sum_{j }^{r}\| D ^{ s }B_{t}^{(j)}\|_{L^{2}}
^{\f{2(4r +2(s+1)-1)}{4j +2s -1}}+C\sum_{j }^{r}
\| D  u^{(r-j)}_{t} \|^{\f{2(4r +2(s+1)-1)}{4(r-j)+2 -1}}_{L^{2}} +C
\sum_{j }^{r}\| D ^{s}u^{(r-j)}_{t} \|^{\f{2(4r +2(s+1)-1)}{4(r-j)+2 s -1}}_{L^{2}}
 \\&+C\sum_{j>0 }^{r}
\| D   B_{t}^{(j)}\|^{\f{2(4r +2(s+1)-1)}{4j  +1}}_{L^{2}}
  + \eta \sum_{j>0 }^{r}\| D ^{s}u^{(r-j )}_{t}\|^{\f{8  j}{ 4(r-j)+2 s -1}}_{L^{2}}\| D ^{s+1} u^{(r-j )}_{t}\|_{L^{2}}^{2}\\&+ \eta \sum_{j>0 }^{r}\| D ^{s}B^{(r-j )}_{t}\|^{\f{8  j}{ 4(r-j)+2 s -1}}_{L^{2}}\| D ^{s+1} B^{(r-j )}_{t}\|_{L^{2}}^{2}+\f\varepsilon4\| D ^{s+1}B _{t}^{(r)}\|_{L^{2}}^{2} +\f\varepsilon4\| D ^{s+1}u _{t}^{(r)}\|_{L^{2}}^{2}.
   \ea\ee
Inserting \eqref{4.14v2}-\eqref{4.15v2} into \eqref{4.2} and choosing $\varepsilon$ sufficiently small, we observe that
\be\ba\label{4.16v1}
&\f{1}{2}\f{d}{dt} (\|  D ^{s}u _{t}^{(r)}\|^{2}_{L^{2}}+\|  D ^{s}B _{t}^{(r)}\|^{2}_{L^{2}})+\f{1}{2}\|  D ^{s+1}u _{t}^{(r)}\|^{2}_{L^{2}}+\f{1}{2}\|  D ^{s+1}B _{t}^{(r)}\|^{2}_{L^{2}}\\
\leq &  C\sum_{j }^{r-1}(\| D ^{ s }B_{t}^{(j)}\|_{L^{2}}^{2}+\| D ^{ s }u_{t}^{(j)}\|_{L^{2}}^{2})
^{\f{ 4r +2(s+1)-1 }{4j +2s -1}}+  C(\| D ^{ s  }u_{t}^{(r)}\|_{L^{2}}+\| D ^{ s  }B_{t}^{(r)}\|_{L^{2}})
^{\f{ (4r +2(s+1)-1)}
{4r +2s-1}}
 \\&+C\sum_{j=0 }^{r}
(\| D   u_{t}^{(j)}\|_{L^{2}}^{2}+\| D   B_{t}^{(j)}\|_{L^{2}}^{2})^{\f{ 4r +2(s+1)-1 )}{4j  +1}}
 \\&+2\eta\sum_{j>0 }^{r}(\| D ^{s}u^{(r-j )}_{t}\|_{L^{2}}^{2}+\| D ^{s}B^{(r-j )}_{t}\|_{L^{2}}^{2})^{\f{4  j}{ 4(r-j)+2 s -1}}(\| D ^{s+1} u^{(r-j )}_{t}\|_{L^{2}}^{2}+\| D ^{s+1} B^{(r-j )}_{t}\|_{L^{2}}^{2} ),
\ea\ee
which turns out  \eqref{4.13v3}.\\
(2)
It follows from
\eqref{4.2}-\eqref{4.8} with $r=0$ that
\be\ba\label{4.13v1}
&\f{1}{2}\f{d}{dt} (\|  D ^{s}u \|^{2}_{L^{2}}+\|  D ^{s}B \|^{2}_{L^{2}})+(\| D ^{s+1}u \|^{2}_{L^{2}}+\|  D ^{s+1}B \|^{2}_{L^{2}})\\
=&- \int_{\mathbb{R}^{3}}[ D ^{s}(u \cdot\nabla u )-u \cdot\nabla  D ^{s} u]\cdot D ^{s}u dx+ \int_{\mathbb{R}^{3}}[ D ^{s}(B\cdot\nabla B )-B\cdot\nabla  D ^{s}B ]\cdot D ^{s}u dx\\&- \int_{\mathbb{R}^{3}}[ D ^{s}(u \cdot\nabla B )-u \cdot\nabla  D ^{s}B]\cdot D ^{s}B dx+ \int_{\mathbb{R}^{3}}[ D ^{s}(B\cdot\nabla u )-B\cdot\nabla  D ^{s}u]\cdot D ^{s}B dx.
\ea
\ee
A slight modification the proof of \eqref{4.7}, we have
\be\ba\label{4.19}
   & \B|\int_{\mathbb{R}^{3}}[ D ^{s}(u \cdot\nabla u )- u \cdot\nabla D ^{s} u  ] \cdot D ^{s}u dx\B|\\
\leq& C\| D ^{s} u\|_{L^{2}}\|\nabla u\|_{L^{3}}\| D ^{s}u\|_{L^{6}}\\\leq& C\| D ^{s} u\|_{L^{2}} \| D  u \|^{\f{2(s+1) -5 }{2(s-1)}}_{L^{2}}\| D ^{s} u \|_{L^{2}}^{\f{1}{2(s-1)}}\|  D ^{s}u\|_{L^{6}}\\
\leq& C \| D  u \|^{\f{2(s+1) -5 }{ (s-1)}}_{L^{2}}\| D ^{s} u \|_{L^{2}}^{2+\f{1}{ (s-1)}}   +\varepsilon\| D ^{s+1}u \|^{2}_{L^{2}}\\\leq& C\| D ^{ s }u \|_{L^{2}}
^{\f{2( 2(s+1)-1)}
{ 2s -1}} +C
\| D  u \|^{ {2( 2(s+1)-1)} }_{L^{2}} +\varepsilon\| D ^{s+1}u \|^{2}_{L^{2}}.
\ea
\ee
Likewise,
\be\ba
&\B|- \int_{\mathbb{R}^{3}}[ D ^{s}(u \cdot\nabla B )-u \cdot\nabla D ^{s}B]\cdot D ^{s}B dx\B|\\
\leq& C\| D ^{s} u\|_{L^{2}}\|\nabla B\|_{L^{3}}\|  D ^{s}B\|_{L^{6}}+\| \nabla u\|_{L^{3}}\| D ^{s} B\|_{L^{2}}\|  D ^{s}B\|_{L^{6}}\\\leq &C\| D ^{s} u\|_{L^{2}} \| D  B \|^{\f{2(s+1) -5 }{2(s-1)}}_{L^{2}}\| D ^{s} B \|_{L^{2}}^{\f{1}{2(s-1)}}\|  D ^{s+1}B\|_{L^{2}}\\&+C\| D ^{s} B\|_{L^{2}} \| D  u \|^{\f{2(s+1) -5 }{2(s-1)}}_{L^{2}}\| D ^{s} u \|_{L^{2}}^{\f{1}{2(s-1)}}\|  D ^{s+1}B\|_{L^{2}}\\ \leq& C(\| D ^{ s }u \|^{2}_{L^{2}}+\| D ^{ s }B \|^{2}_{L^{2}})
^{\f{ ( 2(s+1)-1)}
{ 2s -1}} +C
(\| D  u \|_{L^{2}} ^{2}+\| D  B \|_{L^{2}} ^{2})^{\f{ ( 2(s+1)-1)}{  2 -1}} \\&+\varepsilon(\| D ^{s+1}u \|^{2}_{L^{2}}+\| D ^{s+1}B \|^{2}_{L^{2}}),
\ea
\ee
and
\be\ba\label{4.21v2} &\B|\int_{\mathbb{R}^{3}}[ D ^{s}(B\cdot\nabla B )-B\cdot\nabla D ^{s}B ]\cdot D ^{s}u dx+ \int_{\mathbb{R}^{3}}[ D ^{s}(B\cdot\nabla u )-B\cdot\nabla D ^{s}u]\cdot D ^{s}B dx\B|
\\\leq& C(\| D ^{ s }u \|^{2}_{L^{2}}+\| D ^{ s }B \|^{2}_{L^{2}})
^{\f{ ( 2(s+1)-1)}
{ 2s -1}} +C
(\| D  u \|_{L^{2}} ^{2}+\| D  B \|_{L^{2}} ^{2})^{  2(s+1)-1  }\\& +\varepsilon(\| D ^{s+1}u \|^{2}_{L^{2}}+\| D ^{s+1}B \|^{2}_{L^{2}}).
\ea
\ee
Inserting   \eqref{4.19}-\eqref{4.21v2} into \eqref{4.13v1}, we infer that
\be\ba\label{4.15}
& \f{1}{2}\f{d}{dt} (\|  D ^{s}u \|^{2}_{L^{2}}+\|  D ^{s}B \|^{2}_{L^{2}})+\f12\|  D ^{s+1}u \|^{2}_{L^{2}}+\f12\|  D ^{s+1}B \|^{2}_{L^{2}}\\
 \leq & C(\| D ^{ s }u \|_{L^{2}}^{2}+\| D ^{ s }B \|_{L^{2}}^{2})
^{\f{   2(s+1)-1 }
{ 2s -1}} +C(
\| D  u \|_{L^{2}}^{2}+
\| D  B \|_{L^{2}}^{2})^{   2(s+1)-1  },
\ea
\ee
where we have taken
  $\varepsilon$ sufficiently small.\\
This completes the proof of this lemma.
\end{proof}
It is a position   to show   Theorem \ref{the1.1}.
\begin{proof}[Proof of Theorem \ref{the1.1}]
To proceed further, we set
\be\ba\label{4.23}
&F_{p,q}= \| D ^{q }u_{t}^{(p)}\|_{L^{2}}^{2}+\| D ^{q }B_{t}^{(p)}\|_{L^{2}}^{2}+\sum_{\ell=0}^{p-1}
\f{4\ell +2q -1}{4p +2q -1}F_{\ell,q}^{}+\sum_{\ell=0}^{p }
\f{4\ell +1}{4p +2q -1}F_{\ell,1}^{\f{4p +2q -1}{4\ell +1}},\\
&F_{0,q}= \| D ^{q }u \|_{L^{2}}^{2}+\| D ^{q }B \|_{L^{2}}^{2}+\f{1}{ 2q -1}
F_{0,1}^{  2q -1 },\\
&
G_{p,q}= \f12\| D ^{(q+1)}u_{t}^{(p)}\|_{L^{2}}^{2}+\f12\| D ^{(q+1)}B_{t}^{(p)}\|_{L^{2}}^{2}+\f12\sum_{\ell=0}^{p-1}
F_{\ell,q}^{\f{4(p-\ell) }{4\ell +2q -1}}G_{\ell,q},\\
&G_{0,q}= \f12\| D ^{(q+1)}u \|_{L^{2}}^{2}+\f12\| D ^{(q+1)}B \|_{L^{2}}^{2}+\f12
F_{0,1}^{ 2q -2  }G_{0,1},\\
&G_{p,0}= \f12\| D  u_{t}^{(p)}\|_{L^{2}}^{2}+ \f12\| D  B_{t}^{(p)}\|_{L^{2}}^{2}+\f12\sum_{\ell=0}^{p-1}
F_{\ell,1}^{\f{4(p-\ell) -2}{4\ell  +1}}G_{\ell,1}.
\ea\ee
In particular, we have
\be\ba\label{4.16}
&F_{0,1}= \| D  u \|_{L^{2}}^{2}+\| D  B \|_{L^{2}}^{2}, \\&F_{1,0} =(\| u_{t}  \|_{L^{2}}^{2}+\| B_{t}  \|_{L^{2}}^{2})+\f{1}{ 3  }F_{0,1}^{3},\\
&F_{0,2}= \| D ^{2}u \|_{L^{2}}^{2}+\| D ^{2}B \|_{L^{2}}^{2}+\f{1}{3 } F_{0,1}^{3},\\
&F_{1,1}=  \| D  u_{t} \|_{L^{2}}^{2}+\| D  B_{t} \|_{L^{2}}^{2}+\f{ 1}{5}
F_{0,1}^{5}, \\
&F_{2,0}= \| u_{t}^{(2)}\|_{L^{2}}^{2}+\| B_{t}^{(2)}\|_{L^{2}}^{2}+
\f{3 }{ 7}F_{1,0}^{\f{ 7}{3 }}+\f{1}{ 7}
F_{0,1}^{7}+\f{ 5}{ 7}F_{1,1}^{\f{ 7}{5}},\\
&G_{0,1}= \f12\| D ^{2}u \|_{L^{2}}^{2}+ \f12\| D ^{2}B \|_{L^{2}}^{2}, \\
&G_{1,1}= \f12\| D ^{ 2 }u_{t} \|_{L^{2}}^{2}+\f12\| D ^{ 2 }B_{t} \|_{L^{2}}^{2}+\f12
F_{0,1}^{4}G_{0,1},\\
&G_{0,2}= \f12\| D ^{3 }u \|_{L^{2}}^{2} +\f12\| D ^{3 }B \|_{L^{2}}^{2} +\f12F^{2}_{0,1}G_{0,1},\\
&G_{2,0}= \f12\| D   u_{t}^{(2)}\|_{L^{2}}^{2}+ \f12\| D   B_{t}^{(2)}\|_{L^{2}}^{2}+\f12
F_{0,1}^{6}G_{0,1}+\f12F_{1,1}^{\f{2  }{ 5}}G_{1,1}.
\ea\ee
The introduction of above notations is motivated by energy inequality \eqref{4.13v3}.

We will apply inductive method to confirm that
 \be\label{4.13}
\left\{\ba
&F_{p,q}\in L^{\f{1}{4p +2q -1}},\\
&\f{d}{dt}F_{p,q}+G_{p,q}\leq CF^{\f{4p +2q +1}{4p +2q -1}}_{p,q},\\
&G_{p,q}\in L^{\f{1}{4p +2q +1}}, \| D ^{( q+1)}u^{(p ) }_{t}\|_{L^{2}}^{2}, \| D ^{( q+1)}B^{(p ) }_{t}\|_{L^{2}}^{2}\in L^{\f{1}{4p +2q +1}},\ea\right.\ee for $ k= p+q$, where $p, q\in \mathbb{N}$. \\
It is worth pointing out that we have proved    \eqref{4.13} for $p+q=1$ in last section.
Indeed,   from \eqref{3.42}-\eqref{3.48}, we infer
that
\be\label{4.21}
\left\{\ba
&F_{0,1} \in L^1,F_{ 1,0} \in   L^{\f{ 1 }{ 3 }},\\
&\f{d}{dt}F_{0,1}+G_{0,1}\leq CF_{0,1}^{3},\f{d}{dt}F_{ 1,0}+G_{1,0}\leq CF_{ 1,0}^{\f{ 5 }{3 }},\\
&G_{0,1}\in L^{\f{1}{  3  }},  \|  D ^{2}u   \|_{L^{2}}^{2},  \|  D ^{2}B   \|_{L^{2}}^{2}\in L^{\f{1}{  3  }},G_{1,0}\in L^{\f{1}{ 5}},\| D  u  _{t}\|_{L^{2}}^{2},\| D  B  _{t}\|_{L^{2}}^{2}\in L^{\f{1}{5}}.\ea\right.\ee
This yields \eqref{4.13} for $p+q=1$. Next we shall address the case $p+q=2$.  We will present the derivation of $F_{0,2}, F_{1,1}, F_{2,0}$ in turn.\\
 It follows  from
\eqref{4.16}-\eqref{4.21} that
\be\label{4.26v2}
F_{0,2} \in L^{\f{ 1}{3 }}.
\ee
\\
With the help of \eqref{4.21}, we know that
\be\ba\label{4.21v1}
\f{d}{dt}F_{0,1}+G_{0,1}\leq CF_{0,1}^{3}.
\ea
\ee
A routine computation gives rise to
\be\ba\label{4.24}
\f13 \f{d}{dt}F_{0,1}^{3}+F^{2}_{0,1}G_{0,1}\leq CF_{0,1}^{5}.
\ea
\ee
By the definition of $F_{0,1}$ in \eqref{4.16}, we derive from \eqref{4.15v1} and \eqref{4.24}  that
$$\ba
&\f{1}{2}\f{d}{dt}\B[\|  D ^{2}u \|^{2}_{L^{2}}+\|  D ^{2}B \|^{2}_{L^{2}}+\f13 F_{0,1}^{3}\B]+\f12\|  D ^{3 }u \|^{2}_{L^{2}}+\f12\| D ^{3 }B \|^{2}_{L^{2}}+F^{2}_{0,1}G_{0,1} \\
\leq& C\| D ^{ 2}u \|_{L^{2}}
^{\f{10}
{3 }} +C
\| D  u \|^{10}_{L^{2}}
+CF_{0,1}^{5}\\
\leq&C \| D ^{ 2}u \|_{L^{2}}
^{\f{10}
{3 }}  +CF_{0,1}^{5}.
\ea
$$
We conclude by the latter  inequality that
\be\ba
\f{d}{dt}F_{0,2}+G_{0,2}\leq CF_{0,2}^{\f{  5 }{3 }}.
\ea
\ee
Using Lemma \ref{hofflemma}, $F_{0,2} \in L^{\f{ 1}{3 }}$ in \eqref{4.26v2} and  the latter inequality, one has
$$
G_{0,2}\in L^{\f{ 1}{5}},
$$
which means that
\be\|  D ^{3 }u \|^{2}_{L^{2}},\|  D ^{3 }B \|^{2}_{L^{2}}\in L^{\f{ 1}{5}}.
\ee
We drive from \eqref{4.21} that
 $\| D  u  _{t}\|_{L^{2}}^{2}\in L^{\f{1}{5}}$ and $F_{0,1} \in L^1$, which leads to
 \be\label{4.29}
 F_{1,1} \in L^{\f{1}{5}}.
 \ee
Following the same path of \eqref{4.24}, one gets
\be\ba\label{4.26}
\f{1}{  5 }\f{d}{dt}F_{0,1}^{5}+F^{4}_{0,1}G_{0,1}\leq CF_{0,1}^{7}.
\ea
\ee
Combining  \eqref{3.24v1} and \eqref{4.26}, we infer that
\be\ba
&\f{d}{dt}\B[\| D   u _{t}\|^{2}_{L^{2}}+\| D  B _{t}\|^{2}_{L^{2}}+\f{1}{5}F_{0,1}^{5}\B]+\f12\| D ^{2 }u _{t}\|_{L^{2}}^{2}+\f12\| D ^{2 }B _{t}\|_{L^{2}}^{2}+F^{4}_{0,1}G_{0,1}\\
\leq & C(\| D  u_{t}\|_{L^{2}}^{2}+\| D  B_{t}\|_{L^{2}}^{2})^{\f{7}{5}}
+C(\| D  u \|_{L^{2}}^{2}+\| D  B \|_{L^{2}}^{2})^{ 7 }
\\&+16\eta(\| D ^{2}u (t)\|_{L^{2}}^{2}+ \| D ^{2}B (t)\|_{L^{2}}^{2})(\| D  B \|_{L^{2}}^{2}+\| D  u\|_{L^{2}}^{2})^{4}+C F_{0,1}^{  7 }\\
\leq & C(\| D  u_{t}\|^{2}_{L^{2}}+\| D  B_{t}\|^{2}_{L^{2}})^{\f{7}{5}}
 +16\eta   F^{4}_{0,1}G_{0,1}+CF_{0,1}^{ 7 }.
\ea
\ee
where we have used
$$F_{0,1}^{7}\leq CF_{1,1}^{\f{ 7}{5}}.$$
Thus, we conclude by the arbitrariness of  $\eta$ that
\be\ba
&\f{d}{dt}\B[\| D   u _{t}\|^{2}_{L^{2}}+\| D   B _{t}\|^{2}_{L^{2}}+\f{1}{5}F_{0,1}^{5}\B]+\f12\| D ^{2 }u _{t}\|_{L^{2}}^{2}+\f12\| D ^{2 }B _{t}\|_{L^{2}}^{2}+\f12F^{ 4  }_{0,1}G_{0,1} \\
\leq & C(\| D  u_{t}\|^{2}_{L^{2}}+\| D  B_{t}\|^{2}_{L^{2}})^{\f{7}{5}}
 +CF_{1,1}^{\f{ 7}{5}},
\ea
\ee
which means
\be\label{4.33}
\f{d}{dt}F_{1,1}+G_{1,1}\leq
CF_{1,1}^{\f{ 7}{5}}.
\ee
This together with Lemma \ref{hofflemma} implies that
\be
G_{1,1}\in L^{\f{1}{7}}, \| D ^{2 }u _{t}\|_{L^{2}}^{2}\in L^{\f{1}{7}}.
\ee
where \eqref{4.29} was used.\\
We deduce from
\eqref{3.31v1} with $r=2$ that
\be\ba\label{4.38v1}
&\f{d}{dt}(\| u _{t}^{(2)}\|^{2}_{L^{2}}+\| B _{t}^{(2)}\|^{2}_{L^{2}})+\f{1}{2}\| D  u _{t}^{(2)}\|_{L^{2}}^{2}+\f{1}{2}\| D  B _{t}^{(2)}\|_{L^{2}}^{2}  \\
\leq&C(\|u_{t} \|_{L^{2}}^{2}+\|B_{t} \|_{L^{2}}^{2})
^{\f{ 9}{3 }}
+(\|u_{t}^{(2)} \|_{L^{2}}^{2}+\|u_{t}^{(2)} \|_{L^{2}}^{2})
^{\f{ 9}7}
\\&+C(\| D  u_{t}  \|_{L^{2}}^{2}+\| D  B_{t}  \|_{L^{2}}^{2})^{\f{ 9}{5}}+C(\| D  u  \|^{2}_{L^{2}}+\| D  B  \|^{2}_{L^{2}})^{ 9}\\&+16\eta(\| D  u_{t} \|_{L^{2}}^{2}+\| D  B_{t} \|_{L^{2}}^{2})^{\f{2  }{5}}
(\| D ^{2}u_{t} \|^{2}_{L^{2}}+\| D ^{2}B_{t} \|^{2}_{L^{2}})\\&+16\eta(\| D  u  \|_{L^{2}}^{2}+\| D  B  \|_{L^{2}}^{2})^{6}
(\| D ^{2}u   \|^{2}_{L^{2}}+\| D ^{2}B   \|^{2}_{L^{2}}).
\ea
\ee
 We deduce from
\eqref{4.13} with $k=1$ that
$$\ba
\f17\f{d}{dt}F_{0,1}^{7}+F_{0,1}^{6}G_{0,1}\leq CF_{0,1}^{  9 },\\
\f57\f{d}{dt}F_{1,1}^{\f{ 7}{5}}+F_{1,1}^{\f{2  }{ 5}}G_{1,1}\leq
CF_{1,1}^{\f{ 9}{5}},\\
\f37\f{d}{dt}F_{1,0}^{\f{ 7}{3 }}+F_{ 1,0}^{\f{ 4  }{3 }}G_{1,0}\leq CF_{ 1,0}^{3}.\ea$$
Combining this and
\eqref{4.38v1}, we conclude by the definition   \eqref{4.16} that
\be\ba
&\f{d}{dt}\B[\| u _{t}^{(2)}\|^{2}_{L^{2}}+\| B _{t}^{(2)}\|^{2}_{L^{2}}+\f37F_{1,0}^{\f{ 7}{3 }}+
\f17F_{0,1}^{7}+\f57F_{1,1}^{\f{ 7}{5}}\B]\\&+\f{1}{2}\| D  u _{t}^{(2)}\|_{L^{2}}^{2}+\f{1}{2}\| D  B _{t}^{(2)}\|_{L^{2}}^{2} +F_{0,1}^{6}G_{0,1}+F_{1,1}^{\f{2  }{ 5}}G_{1,1}\\
 \leq&C(\|u_{t} \|^{2}_{L^{2}}+\|B_{t} \|_{L^{2}}^{2})
^{\f{ 9}{3 }}
 +C(\|u_{t}^{(2)} \|_{L^{2}}^{2}+\|u_{t}^{(2)} \|_{L^{2}}^{2})
^{\f{ 9}7}
_{L^{2}}\\&+C(\| D  u_{t}  \|_{L^{2}}^{2}+\| D  B_{t}  \|_{L^{2}}^{2})^{\f{ 9}{5}}+C(\| D  u  \|^{2}_{L^{2}}+\| D  B  \|^{2}_{L^{2}})^{ 9}\\&+16\eta(\| D  u_{t} \|_{L^{2}}^{2}+\| D  B_{t} \|_{L^{2}}^{2})^{\f{2  }{5}}
(\| D ^{2}u_{t} \|^{2}_{L^{2}}+\| D ^{2}B_{t} \|^{2}_{L^{2}})\\&+16\eta(\| D  u  \|_{L^{2}}^{2}+\| D  B  \|_{L^{2}}^{2})^{6}
(\| D ^{2}u   \|^{2}_{L^{2}}+\| D ^{2}B   \|^{2}_{L^{2}})+ CF_{0,1}^{  9 }+CF_{1,1}^{\f{ 9}{5}}+CF_{ 1,0}^{3}\\
 \leq&C (\|u_{t}^{(2)} \|_{L^{2}}^{2}+\|u_{t}^{(2)} \|_{L^{2}}^{2})
^{\f{ 9}7}
_{L^{2}}+CF_{0,1}^{  9 }+CF_{1,1}^{\f{ 9}{5}}+CF_{ 1,0}^{3}\\&  +16\eta(\| D  u_{t} \|_{L^{2}}^{2}+\| D  B_{t} \|_{L^{2}}^{2})^{\f{2  }{5}}
(\| D ^{2}u_{t} \|^{2}_{L^{2}}+\| D ^{2}B_{t} \|^{2}_{L^{2}})\\&+16\eta(\| D  u  \|_{L^{2}}^{2}+\| D  B  \|_{L^{2}}^{2})^{6}
(\| D ^{2}u   \|^{2}_{L^{2}}+\| D ^{2}B   \|^{2}_{L^{2}}).
\ea
\ee
As a consequence, we derive for any sufficiently
small $\eta>0$ that
\be\label{4.38v3}
\f{d}{dt}F_{2,0}+G_{2,0}\leq CF_{2,0}^{\f{ 9}{7}}.
\ee
With the help of Theorem \ref{the1.2}, we know that $\| u _{t}^{(2)}\|^{2}_{L^{2}}\in  L^{\f{1}{ 7}}.$ Moreover, we deduce from
\eqref{4.21} and \eqref{4.29} that
\be
F_{0,1} \in L^1,F_{ 1,0} \in   L^{\f{ 1 }{ 3 }}, F_{1,1} \in L^{\f{1}{5}},
\ee
which implies  that
\be
F_{2,0}\in L^{\f{1}{ 7}}.
\ee
As a result, we employ  Lemma  \ref{hofflemma} to estimate \eqref{4.38v3} to get
$$
\| D  u _{t}^{(2)}\|_{L^{2}}^{2}, \| D  B_{t}^{(2)}\|_{L^{2}}^{2}\in L^{\f{1}{ 9}}.
$$
Hence, we have shown  \eqref{4.13} for $p+q=1$ and $p+q=2$.
Now, we assume that \eqref{4.13} is valid for \be\label{4.41v1} p+q\leq  k-2= m+n -2.\ee
It suffices  to prove \eqref{4.13} for $p+q=k-1= m+n -1$.
To this end,
we handle the  quantity   $F_{i, m+n-i-1}$ which is divided into   four cases:  $$\ba
&i=0, \\&1\leq i\leq m+n -3,\\&i= m+n -2,\\ &i= m+n -1.
\ea$$

$Case ~1$. It follows from \eqref{4.23} that
\be\ba\label{4.41}
&F_{0, m+n -1}=\| D ^{(m+n -1)}u \|_{L^{2}}^{2}+\| D ^{(m+n -1)}B\|_{L^{2}}^{2}+\f{1}{ 2(m+n -1)-1}
F_{0,1}^{ 2(m+n -1)-1 },\\
&G_{0,m+n -1}=\f12 \| D ^{(m+n  )}u \|_{L^{2}}^{2}+\f12 \| D ^{(m+n  )}B \|_{L^{2}}^{2}+\f12
F_{0,1}^{ 2(m+n -1)-2  }G_{0,1}.
\ea\ee
Thanks to induction hypotheses \eqref{4.41v1} and $\eqref{4.13}_{3}$, we get $\| D ^{(m+n -1)}u \|_{L^{2}}^{2}\in  L^{\f{1}{ 2(m+n -1)-1}}.$ Hence, making use of  $F_{0,1} \in L^1 $, we obtain $F_{0, m+n -1}\in  L^{\f{1}{ 2(m+n -1)-1}}.$
In the light of estimate \eqref{4.21v1}, we arrive at
$$\ba \f{d}{dt}F_{0,1}+G_{0,1}\leq CF_{0,1}^{3}.
\ea$$
Further more, we compute
\be\ba\label{4.38} \f{d}{dt}F_{0,1}^{  2(m+n -1)-1 }+F_{0,1}^{ 2(m+n -1)-2 }G_{0,1}\leq CF_{0,1}^{    2(m+n -1)+1  }.
\ea\ee
It follows from the   relationship \eqref{4.15v1} and \eqref{4.38} that
$$\ba
& \f{d}{dt} \B[\|  D ^{(m+n -1)}u \|^{2}_{L^{2}}+\|  D ^{(m+n -1)}B \|^{2}_{L^{2}}+\f{1}{ 2(m+n -1)-1}F_{0,1}^{  2(m+n -1)-1 }\B]\\&+\f12\| D ^{ (m+n )  }u \|^{2}_{L^{2}}+\f12\|  D ^{ (m+n )  }B \|^{2}_{L^{2}}+F_{0,1}^{ 2(m+n -1)-2  }G_{0,1} \\
\leq &C(\| D ^{ (m+n -1)  }u \|^{2}_{L^{2}}+\| D ^{ (m+n -1)  }B\|^{2}_{L^{2}})
^{\f{  2(m+n)-1 }
{ 2(m+n -1)-1}} +C
F_{0,1}^{   2(m+n)-1  }.
\ea
$$
According  to the definition of $F_{0, m+n -1}$ in \eqref{4.41}, we discover that
$$
F_{0,1}^{   2(m+n)-1  }\leq
CF_{0,m+n-1}^{\f{  2(m+n)-1 }{  2(m+n -1)-1}}.
$$
As a consequence, we observe that
$$\ba \f{d}{dt}F_{0,m+n-1}+G_{0,m+n-1}\leq F_{0,m+n-1}^{\f{  2(m+n)-1 }
{ 2(m+n -1)-1}}.
\ea$$
We derive from Lemma \ref{hofflemma} and $F_{0, m+n -1}\in  L^{\f{1}{ 2(m+n -1)-1}} $ that
$$G_{0,m+n-1}\in L^{\f{1}{ 2(m+n)-1}}, \|  D ^{ (m+n ) }u \|^{2}_{L^{2}}, \| D ^{ (m+n ) }B \|^{2}_{L^{2}}\in L^{\f{1}{ 2(m+n)-1}}.$$
The first case is asserted.

$Case$ 2.  $1\leq i<m+n-2$.\\
We deduce from \eqref{4.23} that
\be\ba\label{4.46v3}
F_{i,m+n-i-1}= & \| D ^{(m+n-i-1)}u_{t}^{(i)}\|_{L^{2}}^{2}+ \| D ^{(m+n-i-1)}B_{t}^{(i)}\|_{L^{2}}^{2}\\&+ \sum_{\ell=0}^{i-1}\f{4\ell +2(m+n-i-1)-1}{4i +2(m+n-i-1)-1}
F_{\ell,m+n-i-1}^{\f{4i +2(m+n-i-1)-1}{4\ell +2(m+n-i-1)-1}}\\&+\sum_{\ell=0}^{i }\f{4\ell +1}{4i +2(m+n-i-1)-1}
F_{\ell,1}^{\f{4i +2(m+n-i-1)-1}{4\ell +1}},\\
G_{i,m+n-i-1}= &\f12\| D ^{(m+n-i )}u_{t}^{(i)}\|_{L^{2}}^{2}+\f12\| D ^{(m+n-i )}B_{t}^{(i)}\|_{L^{2}}^{2}\\&+\f12\sum_{\ell=0}^{i-1}
F_{\ell,m+n-i-1}^{\f{4(i-\ell) }{4\ell +2(m+n-i-1)-1}}G_{\ell,m+n-i-1}.
\ea\ee
In view of
inductive hypothesis \eqref{4.41v1} and $\eqref{4.13}_{3}$,  we get $\| D ^{(m+n-i-1)}u_{t}^{(i)}\|_{L^{2}}^{2},
\| D ^{(m+n-i-1)}B_{t}^{(i)}\|_{L^{2}}^{2}\in L^{\f{1}{4i +2(m+n-i-1)-1}}$. Moreover, since  $\ell+m+n-i-1\leq m+n -2$ and $\ell+1\leq i+1\leq m+n-2$, by   the induction hypotheses  \eqref{4.41v1},  we infer that $F_{\ell,m+n-i-1}\in L^{\f{1}{4\ell +2(m+n-i-1)-1}}$     and   $F_{\ell,1}\in L^{\f{1}{4\ell  +1}}$. Summarily, one has
\be\label{4.48}
F_{i,m+n-i-1}\in L^{\f{1}{4i +2(m+n-i-1)-1}}.
\ee
According to   the induction hypotheses \eqref{4.41v1}, we arrive at
\be\ba
&\f{d}{dt}F_{\ell,m+n-i-1}+G_{\ell,m+n-i-1}\leq F_{\ell,m+n-i-1}^{\f{4\ell +2(m+n-i-1)+1}{4\ell +2(m+n-i-1)-1}},\ell \leq i-1,\\
&\f{d}{dt}F_{\ell,1}+G_{\ell,1}\leq F_{\ell,1}^{\f{4\ell + 3 }{4\ell +1}},\ell \leq i,
\ea\ee
which turns out that
\be\ba\label{4.44}
 &c_{\ell,m+n-i-1}\f{d}{dt}F_{\ell,m+n-i-1}^{\f{4i +2(m+n-i-1)-1}{4\ell +2(m+n-i-1)-1}}+
F_{\ell,m+n-i-1}^{\f{4(i-\ell) }{4\ell +2(m+n-i-1)-1}}G_{\ell,m+n-i-1}\leq C F_{\ell,m+n-i-1}^{\f{4i +2(m+n-i-1)+1}{4\ell +2(m+n-i-1)-1}},\\
& c_{\ell,1}\f{d}{dt}
F_{\ell,1}^{\f{4i +2(m+n-i-1)-1}{4\ell +1}}
+
F_{\ell,1}^{\f{4(i-\ell)+2(m+n-i-1)-2 }{4\ell +1}}G_{\ell,1}\leq C F_{\ell,1}^{\f{4i +2(m+n-i-1)+1}{4\ell +1}},
\ea\ee
where
$$\ba
c_{\ell,m+n-i-1}=\f{4\ell +2(m+n-i-1)-1}{4i +2(m+n-i-1)-1},\\
c_{\ell,1}=\f{4\ell +1}{4i +2(m+n-i-1)-1}.
\ea$$
From \eqref{4.13v3} and
\eqref{4.13}, we conclude by the definition of  $F_{j,1}, F_{j,m+n-i-1}, G_{j,m+n-i-1}$ in \eqref{4.41}  that
\be\ba\label{4.45}
&\f{d}{dt}\B[ \|  D ^{(m+n-i-1)}u _{t}^{(i)}\|^{2}_{L^{2}}+\| D ^{(m+n-i-1)}B _{t}^{(i)}\|^{2}_{L^{2}}\B]\\&+\f{1}{2}\|  D ^{(m+n-i )}u _{t}^{(r)}\|^{2}_{L^{2}}+\f{1}{2}\|  D ^{(m+n-i )}B _{t}^{(r)}\|^{2}_{L^{2}}\\\leq&C\sum_{j }^{i-1} (\| D ^{ (m+n-i-1) \alpha}u_{t}^{(j)}\|_{L^{2}}^{2}+\| D ^{ (m+n-i-1) \alpha}B_{t}^{(j)}\|_{L^{2}}^{2})
^{\f{ (4i +2(m+n-i)-1)}
{4j +2(m+n-i-1)-1}}\\&+C(\| D ^{ (m+n-i-1) \alpha}u_{t}^{(i)}\|_{L^{2}}^{2}+\| D ^{ (m+n-i-1) \alpha}B_{t}^{(i)}\|_{L^{2}}^{2})
^{\f{ (4i +2(m+n-i)-1)}
{4i +2(m+n-i-1)-1}}\\&+C\sum_{j }^{i}
(\| D  u^{(i-j)}_{t} \|_{L^{2}}^{2}+\| D  B^{(i-j)}_{t} \|_{L^{2}}^{2})^{\f{ (4i +2(m+n-i)-1)}{4(i-j)+2 -1}}\\&+4\eta\sum_{j>0 }^{i}(\| D ^{(m+n-i-1)}u^{(i-j )}_{t}\|_{L^{2}}^{2}+\| D ^{(m+n-i-1)}B^{(i-j )}_{t}\|_{L^{2}}^{2})^{\f{4  j}{ 4(r-j)+2 (m+n-i-1)-1}}\\&\times(\| D ^{(m+n-i)} u^{(i-j )}_{t}\|_{L^{2}}^{2}+\| D ^{(m+n-i)} B^{(i-j )}_{t}\|_{L^{2}}^{2})\\\leq&C\sum_{j }^{i-1} F_{j,m+n-i-1}
^{\f{ (4i +2(m+n-i)-1)}
{4j +2(m+n-i-1)-1}}+C(\| D ^{ (m+n-i-1) \alpha}u_{t}^{(i)}\|_{L^{2}}^{2}+\| D ^{ (m+n-i-1) \alpha}B_{t}^{(i)}\|_{L^{2}}^{2})
^{\f{ (4i +2(m+n-i)-1)}
{4i +2(m+n-i-1)-1}}\\&+C\sum_{j=0 }^{i}
F_{j,1}^{\f{ (4i +2(m+n-i)-1)}{4j +2 -1}} +4\eta\sum_{j=0 }^{i-1}F_{j,m+n-i-1}^{\f{4  (i-j)}{ 4j +2 (m+n-i-1)-1}}G_{j,m+n-i-1}.
\ea\ee
Combining  \eqref{4.44} and    \eqref{4.45}, we get
\be\ba\label{4.46}
&\f{d}{dt} \B[\|  D ^{(m+n-i-1)}u _{t}^{(i)}\|^{2}_{L^{2}}+\| D ^{(m+n-i-1)}B _{t}^{(i)}\|^{2}_{L^{2}}\B]\\&+\f{d}{dt} \B[\sum_{\ell=0}^{i-1 }c_{\ell,m+n-i-1}F_{\ell,m+n-i-1}^{\f{4i +2(m+n-i-1)-1}{4\ell +2(m+n-i-1)-1}}+\sum_{\ell=0}^{i } c_{\ell,1}F_{\ell,1}^{\f{4i +2(m+n-i-1)-1}{4\ell +1}}
\B]\\&+\f{1}{2}\|  D ^{(m+n-i )}u _{t}^{(r)}\|^{2}_{L^{2}}+\f{1}{2}\| D ^{(m+n-i )}B _{t}^{(r)}\|^{2}_{L^{2}}+F_{\ell,m+n-i-1}^{\f{4(i-\ell) }{4\ell +2(m+n-i-1)-1}}G_{\ell,m+n-i-1}  \\\leq&\sum_{j }^{i-1} F_{j,m+n-i-1}
^{\f{ (4i +2(m+n-i)-1)}
{4j +2(m+n-i-1)-1}}+(\| D ^{ (m+n-i-1) \alpha}u_{t}^{(i)}\|_{L^{2}}^{2}+\| D ^{ (m+n-i-1) \alpha}B_{t}^{(i)}\|_{L^{2}}^{2})
^{\f{ (4i +2(m+n-i)-1)}
{4i +2(m+n-i-1)-1}}\\&+\sum_{j=0 }^{i}
F_{j,1}^{\f{ (4i +2(m+n-i)-1)}{4j +2 -1}} +\eta\sum_{j=0 }^{i-1}F_{j,m+n-i-1}^{\f{4  (i-j)}{ 4j +2 (m+n-i-1)-1}}G_{j,m+n-i-1}.
\ea\ee
In view of  of definition \eqref{4.41}, we have
 $$\ba&\sum_{j }^{i-1} F_{j,m+n-i-1}
^{\f{ (4i +2(m+n-i)-1)}
{4j +2(m+n-i-1)-1}}\leq \sum_{j }^{i-1} F_{i,m+n-i-1}^{\f{4i +2(m+n-i)-1}{4i +2(m+n-i-1)-1}}\leq C F_{i,m+n-i-1}^{\f{4i +2(m+n-i)-1}{4i +2(m+n-i-1)-1}},\\&\sum_{j=0 }^{i}
F_{j,1}^{\f{  4i +2(m+n-i)-1 }{4j +2 -1}}
\leq\sum_{j=0 }^{i}
F_{i,m+n-i-1}^{\f{4i +2(m+n-i)-1}{4i +2(m+n-i-1)-1}}\leq C
F_{i,m+n-i-1}^{\f{4i +2(m+n-i)-1}{4i +2(m+n-i-1)-1}}.\ea$$
Plugging this into \eqref{4.46} and choosing $\eta$ small, we remark that
$$
\f{d}{dt}F_{i,m+n-i-1}+G_{i,m+n-i-1}\leq
CF_{i,m+n-i-1}^{\f{4i +2(m+n-i)-1}{4i +2(m+n-i-1)-1}}.
$$
With this in hand,  we invoke
\eqref{4.48} and Lemma \ref{hofflemma} to get
\be
G_{i,m+n-i-1}\in L^{\f{1}{4i +2(m+n-i)-1}}, \|  D ^{(m+n-i )}u _{t}^{(i)}\|^{2}_{L^{2}}, \|  D ^{(m+n-i )}B _{t}^{(i)}\|^{2}_{L^{2}}\in L^{\f{1}{4i +2(m+n-i)-1}}.
\ee
Case 2 is proved.\\

$Case$ 3. $i=m+n-2.$\\
In this case, we set
\be\ba\label{4.49}
&F_{ m+n-2,1}= \| D  u_{t}^{(m+n-2)}\|_{L^{2}}^{2}+ \| D  B_{t}^{(m+n-2)}\|_{L^{2}}^{2}+
\f{4\ell +1}{4(m+n-2)+1}\sum_{\ell=0}^{m+n-3}
F_{\ell,1}^{\f{4(m+n-2)+1}{4\ell +1}}, \\
&G_{m+n-2,  1}= \f12 \| D ^{2}u_{t}^{(m+n-2)}\|_{L^{2}}^{2}+\f12 \| D ^{2}B_{t}^{(m+n-2)}\|_{L^{2}}^{2}+\f12\sum_{\ell=0}^{m+n-3}
F_{\ell,1}^{\f{4(i-\ell) }{4\ell +2 -1}}G_{\ell,1}.
\ea\ee
According to induction hypotheses \eqref{4.41v1} and $\eqref{4.13}_{3}$, we have $\| D  u_{t}^{(m+n-2)}\|_{L^{2}}^{2}, \| D  B_{t}^{(m+n-2)}\|_{L^{2}}^{2}\in L^{\f{1}{4(m+n-2) +1}}$.
In addition,     the induction hypotheses \eqref{4.41v1} implies  that   $F_{\ell,1}\in L^{\f{1}{4\ell  +1}}$, where we have used $\ell+1\leq m+n-2$.
As a consequence, we get
\be\label{4.56}
F_{ m+n-2,1}\in   L^{\f{1}{4(m+n-2) +1}}.
\ee
It follows from
\eqref{4.13v3} that
\be\ba
&\f{d}{dt}\B[ \|  D  u _{t}^{(m+n-2)}\|^{2}_{L^{2}}+\|  D  B _{t}^{(m+n-2)}\|^{2}_{L^{2}}\B]+\f{1}{2}\|  D ^{2}u _{t}^{(m+n-2)}\|^{2}_{L^{2}}+\f{1}{2}\|  D ^{2}B _{t}^{(m+n-2)}\|^{2}_{L^{2}}\\\leq&C\sum_{j }^{m+n-3} (\| D  u_{t}^{(j)}\|_{L^{2}}^{2}+\| D  u_{t}^{(j)}\|_{L^{2}}^{2})
^{\f{ (4(m+n-2)+3 )}
{4j +1}}\\&+C(\| D ^{   \alpha}u_{t}^{(m+n-2)}\|_{L^{2}}^{2}+\| D ^{   \alpha}B_{t}^{(m+n-2)}\|_{L^{2}}^{2})
^{\f{ 4(m+n-2)+3  }
{4(m+n-2)+1}}\\&+C\sum_{j }^{m+n-2}
[\| D  u^{(m+n-2-j)}_{t} \|_{L^{2}}^{2}+\| D  B^{(m+n-2-j)}_{t} \|_{L^{2}}^{2}]^{\f{ (4(m+n-2)+3 )}{4(r-j)+2 -1}}\\&+4\eta\sum_{j>0 }^{m+n-2}
(\| D  u^{(m+n-2-j )}_{t}\|_{L^{2}}^{2}+\| D  B^{(m+n-2-j )}_{t}\|_{L^{2}}^{2})^{\f{4 j}{ 4(m+n-2-j)+6-5}}\\&\times(\| D ^{2} u^{(m+n-2-j )}_{t}\|_{L^{2}}^{2}+\| D ^{2} B^{(m+n-2-j )}_{t}\|_{L^{2}}^{2})\\\leq&C\sum_{j }^{m+n-3} (\| D  u_{t}^{(j)}\|_{L^{2}}^{2}+\| D  B_{t}^{(j)}\|_{L^{2}}^{2})
^{\f{ (4(m+n-2)+3 )}
{4j +1}}\\&+C(\| D ^{   \alpha}u_{t}^{(m+n-2)}\|_{L^{2}}^{2}+\| D ^{   \alpha}B_{t}^{(m+n-2)}\|_{L^{2}}^{2})
^{\f{ (4(m+n-2)+3 )}
{4(m+n-2)+1}} \\&+4\eta\sum_{j>0 }^{m+n-2}
(\| D  u^{(m+n-2-j )}_{t}\|_{L^{2}}^{2}+\| D  B^{(m+n-2-j )}_{t}\|_{L^{2}}^{2})^{\f{4  j}{ 4(m+n-2-j)+1}}\\&\times(\| D ^{2} u^{(m+n-2-j )}_{t}\|_{L^{2}}^{2}+\| D ^{2} B^{(m+n-2-j )}_{t}\|_{L^{2}}^{2})\\\leq&
C\sum_{j }^{m+n-3} F_{j,1}
^{\f{ 4(m+n-2)+3  }
{4j +1}}+C(\| D ^{   \alpha}u_{t}^{(m+n-2)}\|_{L^{2}}^{2}+\| D ^{   \alpha}B_{t}^{(m+n-2)}\|_{L^{2}}^{2})
^{\f{ (4(m+n-2)+3 )}
{4(m+n-2)+1}} \\&+4\eta\sum_{j=0 }^{m+n-3} F_{j,1}^{\f{4  (m+n-2-j ) }{ 4j +1}}G_{j,1}.
\ea\ee
In the light of definition \eqref{4.49}, we arrive at
$$\sum_{j }^{m+n-3}F_{j,1}
^{\f{ 4(m+n-2)+3  }
{4j +1}}\leq CF_{ m+n-2,1}^{\f{ 4(m+n-2)+3  }
{4(m+n-2)+1}},$$
which means that
\be\ba\label{4.52}
&\f{d}{dt} \B[\|  D  u _{t}^{(m+n-2)}\|^{2}_{L^{2}}+\|  D  B _{t}^{(m+n-2)}\|^{2}_{L^{2}}\B]+\f{1}{2}\|  D ^{2}u _{t}^{(m+n-2)}\|^{2}_{L^{2}}+\f{1}{2}\|  D ^{2}B _{t}^{(m+n-2)}\|^{2}_{L^{2}}
\\\leq&
CF_{ m+n-2,1}^{\f{ 4(m+n-2)+3  }
{4(m+n-2)+1}}+(\| D ^{   \alpha}u_{t}^{(m+n-2)}\|_{L^{2}}^{2}+\| D ^{   \alpha}B_{t}^{(m+n-2)}\|_{L^{2}}^{2})
^{\f{ (4(m+n-2)+3 )}
{4(m+n-2)+1}} \\&+4\eta\sum_{j=0 }^{m+n-3} F_{j,1}^{\f{4  (m+n-2-j ) }{ 4j +1}}G_{j,1}.
\ea\ee
By means of     the induction hypotheses \eqref{4.41v1}, one has
$$
\f{d}{dt}F_{j,1}+G_{j,1}\leq CF_{j,1}^{\f{4j + 3 }{4j +1}},
$$
from which it follows that
\be\label{4.53}
\f{d}{dt}
F_{\ell,1}^{\f{4(m+n-2)+1}{4j +1}}+ F_{j,1}^{\f{4  (m+n-2-j ) }{ 4j +1}}G_{j,1}\leq CF_{j,1}^{\f{4  (m+n-2)+ 3 }{4j +1}},
\ee
where we have used $j+1\leq m+n-2$.\\
Combining \eqref{4.52} and \eqref{4.53}, we arrive at
\be\ba
& \f{d}{dt}\B[ \|  D  u _{t}^{(m+n-2)}\|^{2}_{L^{2}}+\|  D  u _{t}^{(m+n-2)}\|^{2}_{L^{2}}+\f{4j +1}{4(m+n-2)+1} \sum_{j=0 }^{m+n-3}F_{\ell,1}^{\f{4(m+n-2)+1}{4j +1}}\B]
\\&+\f12\|  D ^{2}u _{t}^{(m+n-2)}\|^{2}_{L^{2}}+\f12\|  D ^{2}B _{t}^{(m+n-2)}\|^{2}_{L^{2}}+ \sum_{j=0 }^{m+n-3}F_{j,1}^{\f{4  (m+n-2-j ) }{ 4j +1}}G_{j,1}
\\\leq&
CF_{ m+n-2,1}^{\f{ 4(m+n-2)+3  }
{4(m+n-2)+1}}+(\| D ^{   \alpha}u_{t}^{(m+n-2)}\|_{L^{2}}^{2}+\| D ^{   \alpha}B_{t}^{(m+n-2)}\|_{L^{2}}^{2})
^{\f{ (4(m+n-2)+3 )}
{4(m+n-2)+1}}  \\&+4\eta\sum_{j=0 }^{m+n-3} F_{j,1}^{\f{4  (m+n-2-j ) }{ 4j +1}}G_{j,1},
\ea\ee
which leads to that
\be\ba \label{4.60}
&\f{d}{dt}F_{ m+n-2,1}+G_{ m+n-2,1}\leq CF_{ m+n-2,1}^{\f{ 4(m+n-2)+3  }
{4(m+n-2)+1}}.
\ea\ee
Keeping this in mind, we apply
Lemma \ref{hofflemma} and  \eqref{4.56} to write
$$G_{ m+n-2,1}\in L^{\f{1}{4(m+n-2)+3  }}, \| D ^{ 2}u _{t}^{(m+n-2)}\|_{L^{2}}^{2}, \| D ^{ 2}B _{t}^{(m+n-2)}\|_{L^{2}}^{2}\in L^{\f{1}{4(m+n-2)+3  }}.$$
Hence the statements for Case 3 are proved.

$Case$ 4. $i=m+n-1.$\\
In view of \eqref{4.23}, we write
\be\ba\label{4.61v1}
&F_{ m+n-1,0}= \| u_{t}^{(m+n-1)}\|_{L^{2}}^{2}+\| B_{t}^{(m+n-1)}\|_{L^{2}}^{2}+\sum_{\ell=0}^{m+n-2}
c_{\ell,0}F_{\ell,0}^{\f{4(m+n-1)-1}{4\ell -1}}+\sum_{\ell=0}^{m+n-2}
F_{\ell,1}^{\f{4(m+n-1)-1}{4\ell +1}}, \\
&G_{m+n-1,  0}= \f12\| D  u_{t}^{(m+n-1)}\|_{L^{2}}^{2}+\f12\| D  B_{t}^{(m+n-1)}\|_{L^{2}}^{2}+\f12\sum_{\ell=0}^{m+n-2}
c_{\ell,1}F_{\ell,1}^{\f{4(m+n-1-\ell)-2  }{4\ell +2 -1}}G_{\ell,1},
\ea\ee
where
$$\ba
c_{\ell,0}=\f{4\ell -1}{4(m+n-1)-1},
c_{\ell,1}=\f{4\ell +1}{4(m+n-1)-1}.
\ea$$
From Theorem \ref{the1.2}, we know that
$$\| u_{t}^{(m+n-1)}\|_{L^{2}}^{2}, \| B_{t}^{(m+n-1)}\|_{L^{2}}^{2}\in L^{\f{1}{4(m+n-1)-1}}.$$
Owing to the induction hypotheses \eqref{4.41v1}, one gets
$F_{\ell,0}\in L^{\f{1}{4\ell  -1}}$ and $F_{\ell,1}\in L^{\f{1}{4\ell  +1}}$, where $\ell+0\leq m+m-2$ was used. \\
Before proceeding further, we write
$$
\sum_{\ell=0}^{m+n-2}
F_{\ell,1}^{\f{4(m+n-1)-1}{4\ell +1}}
=\sum_{\ell=0}^{m+n-3}
F_{\ell,1}^{\f{4(m+n-1)-1}{4\ell +1}}+F_{m+n-2,1}^{\f{4(m+n-1)-1}{4(m+n-2)+1}}.
$$
Thanks to  \eqref{4.56} in case 3, we know that $F_{ m+n-2,1}\in   L^{\f{1}{4(m+n-2) +1}}$.
Consequently, we assert
\be\label{4.61}
F_{ m+n-1,0}\in L^{\f{1}{4(m+n-1)-1}}.\ee
Combining
\eqref{3.31v1} and \eqref{4.61}, we find
 \be\ba\label{4.62}
&\f{d}{dt}(\| u _{t}^{(m+n-1)}\|^{2}_{L^{2}}+\| B _{t}^{(m+n-1)}\|^{2}_{L^{2}})+\f{1}{2}\| D  u _{t}^{(m+n-1)}\|_{L^{2}}^{2}+\f{1}{2}\| D  B _{t}^{(m+n-1)}\|_{L^{2}}^{2} \\
\leq&C\sum_{j>0}(\|u_{t}^{(j)}\|
_{L^{2}}^{2}+\|B_{t}^{(j)}\|
_{L^{2}}^{2})^{\f{ (4(m+n-1)+1)}{4j -1}}
\\&+C\sum_{j>0}(\| D  u_{t}^{(m+n-1-j)} \|_{L^{2}}^{2}+\| D  B_{t}^{(m+n-1-j)} \|_{L^{2}}^{2})^{\f{ (4(m+n-1)+1)}{4(m+n-1-j)+1}}
\\&+8\eta\sum_{j>0}(\| D  u_{t}^{(m+n-1-j)} \|_{L^{2}}^{2}+\| D  u_{t}^{(m+n-1-j)} \|_{L^{2}}^{2})^{\f{2 (2j-1)}{  4(m+n-1-j)+1}}
\\&\times(\| D ^{2}u_{t}^{(m+n-1-j)} \|^{2}_{L^{2}}+\| D ^{2}B_{t}^{(m+n-1-j)} \|^{2}_{L^{2}})\\
\leq&C\sum_{j=1}^{m+n-2}F_{j,0}
^{\f{ 4(m+n-1)+1 }{4j -1}}
+C(\| u _{t}^{(m+n-1)}\| _{L^{2}}^{2}+\| B _{t}^{(m+n-1)}\| _{L^{2}}^{2})^{\f{ [4(m+n-1)+1] }{4(m+n-1)-1}}
\\&+C\sum_{j=0}^{m+n-2}F_{j,1} ^{\f{ 4(m+n-1)+1 }{4j +1}}
+8\eta\sum_{j=0}^{m+n-2}F_{j,1}^{\f{  4 (m+n-1-j)-2 }{  4 j +1}}G_{j,1}.
\ea
\ee
In view of definition of $F_{ m+n-1,0}$  in \eqref{4.61}, one arrives at
\be\ba
\sum_{j=1}^{m+n-2}F_{j,0}
^{\f{ 4(m+n-1)+1 }{4j -1}}\leq C F_{ m+n-1,0}^{\f{4(m+n-1)+1}{4(m+n-1)-1}},\\
\sum_{j=1}^{m+n-3}F_{j,1}
^{\f{ 4(m+n-1)+1 }{4j +1}}\leq C F_{ m+n-1,0}^{\f{4(m+n-1)+1}{4(m+n-1)-1}},\\
F_{ m+n-2,1}^{\f{ 4(m+n-2)+3  +2}
{4(m+n-2)+1}}\leq C F_{ m+n-1,0}^{\f{4(m+n-1)+1}{4(m+n-1)-1}}.
 \ea\ee
Substituting  this into \eqref{4.62}, we infer that
 \be\ba\label{4.64}
& \f{d}{dt}\B[\| u _{t}^{(m+n-1)}\|^{2}_{L^{2}}+\| B _{t}^{(m+n-1)}\|^{2}_{L^{2}}\B]+\f12\| D  u _{t}^{(m+n-1)}\|_{L^{2}}^{2} +\f12\| D  B _{t}^{(m+n-1)}\|_{L^{2}}^{2}\\
\leq& C F_{ m+n-1,0}^{\f{4(m+n-1)+1}{4(m+n-1)-1}}
+C\| u _{t}^{(m+n-1)}\| _{L^{2}}^{\f{ 2[4(m+n-1)+1] }{4(m+n-1)-1}}
 +8\eta\sum_{j=0}^{m+n-2}F_{j,1}^{\f{  4 (m+n-1-j)-2 }{  4 j +1}}G_{j,1}.
\ea
\ee
For $j+1\leq m+n-2$,   by  the induction hypotheses \eqref{4.41v1}, we observe that
$$\ba
&\f{d}{dt}F_{j,1}+G_{j,1}\leq CF_{j,1}^{\f{4j + 3 }{4j +1}},
\ea$$
from which it follows that
\be\label{4.65}
c_{j,1}\f{d}{dt}
F_{j,1}^{\f{4(m+n-1)-1}{4j +1}}+F_{j,1}^{\f{4(m+n-1-j)-2  }{4j +1}}G_{j,1}\leq CF_{j,1}^{\f{4(m+n-1) + 1}{4j +1}}\leq C F_{ m+n-1,0}^{\f{4(m+n-1)+1}{4(m+n-1)-1}}.
\ee
For $j= m+n-2$, we employ inequality \eqref{4.60} proved in case 3 to obtain
\be\ba
&\f{d}{dt}F_{ m+n-2,1}+G_{ m+n-2,1}\leq CF_{ m+n-2,1}^{\f{ 4(m+n-2)+3  }
{4(m+n-2)+1}},
\ea\ee
which leads to
\be\ba\label{4.66}
&F_{m+n-2,1}^{\f{   2 }{  4 (m+n-2)+1}}\f{d}{dt}F_{ m+n-2,1}+F_{m+n-2,1}^{\f{   2 }{  4 (m+n-2)+1}}G_{ m+n-2,1}\leq CF_{ m+n-2,1}^{\f{ 4(m+n-2)+3  +2}
{4(m+n-2)+1}}\leq C F_{ m+n-1,0}^{\f{4(m+n-1)+1}{4(m+n-1)-1}}.
\ea\ee
For $j+0\leq m+n-2$,  in light of the induction hypotheses \eqref{4.41v1}  and \eqref{4.13}, one notices that
$$\ba
&\f{d}{dt}F_{j,0}+G_{j,0}\leq CF_{j,0}^{\f{4j + 1}{4j -1}},
\ea$$
which means that
\be\ba\label{4.63}
&c_{j,0}\f{d}{dt}F_{j,0}^{\f{4(m+n-1)-1}{4j -1}}+F_{j,0}^{\f{4(m+n-1-j) }{4j -1}}G_{j,0}\leq CF_{j,0}^{\f{4(m+n-1 )+ 1}{4j -1}}\leq C F_{ m+n-1,0}^{\f{4(m+n-1)+1}{4(m+n-1)-1}}.
\ea\ee
Collecting the estimates  \eqref{4.65}, \eqref{4.66}, \eqref{4.63} and \eqref{4.64}, we end up with
$$\ba
& \f{d}{dt}\B[\| u _{t}^{(m+n-1)}\|^{2}_{L^{2}} +\| B _{t}^{(m+n-1)}\|^{2}_{L^{2}} +\sum_{\ell=0}^{m+n-2}
c_{\ell,0}F_{\ell,0}^{\f{4(m+n-1)-1}{4\ell -1}}+\sum_{\ell=0}^{m+n-2}
c_{\ell,1}F_{\ell,1}^{\f{4(m+n-1)-1}{4\ell +1}} \B]\\&+\sum_{j=0}^{m+n-3}F_{j,1}^{\f{4(m+n-1-j)-2  }{4j +1}}G_{j,1}+F_{m+n-2,1}^{\f{   2 }{  4 (m+n-2)+1}}G_{ m+n-2,1} \\&+\f12\| D  u _{t}^{(m+n-1)}\|_{L^{2}}^{2} +\f12\| D  B _{t}^{(m+n-1)}\|_{L^{2}}^{2}\\
\leq& C F_{ m+n-1,0}^{\f{4(m+n-1)+1}{4(m+n-1)-1}}
+C\| u _{t}^{(m+n-1)}\| _{L^{2}}^{\f{ 2[4(m+n-1)+1] }{4(m+n-1)-1}}
 +8\eta\sum_{j=0}^{m+n-2}F_{j,1}^{\f{  4 (m+n-1-j)-2 }{  4 j +1}}G_{j,1}.
\ea
$$
Consequently, we choose $\eta$ small to obtain
$$\ba
\f{d}{dt}F_{ m+n-1,0}+G_{ m+n-1,0}\leq C F_{ m+n-1,0}^{\f{4(m+n-1)+1}{4(m+n-1)-1}}.
\ea
$$
At this stage,  we apply
Lemma \ref{hofflemma} and  \eqref{4.61} to get
\be\ba
G_{ m+n-1,0}\in L^{\f{1}{4(m+n-1)+1}},
\| D  u _{t}^{(m+n-1)}\|_{L^{2}}^{2}, \| D  B _{t}^{(m+n-1)}\|_{L^{2}}^{2}\in L^{\f{1}{4(m+n-1)+1}}.
\ea
\ee
Finally, no mater in which case, one always get
\be\ba
\| D ^{ (m+n-i)}u _{t}^{(i)}\|_{L^{2}}^{2}, \| D ^{ (m+n-i)}B _{t}^{(i)}\|_{L^{2}}^{2}\in L^{\f{1}{4i+2(m+n-i)-1}}.
\ea
\ee
We achieve the proof of Case 4.
Hence, the proof of this theorem is completed.
\end{proof}
Finally, we present the proof of Corollary \ref{coro}.
\begin{proof}[Proof of Corollary \ref{coro}]
First, we assert that, for $k\in \mathbb{N}$,
\be\label{while}
 D ^{k}u,  D ^{k}B\in L^{\f{1}{k +1}}(0,T;L^{\infty}(\mathbb{R}^{3})). \ee
Indeed, we deduce from the Gagliardo-Nirenberg inequality and Sobolev embedding theorem that
$$\| D ^{k}u\|
^{ }_{L^{\infty}}\leq C\| u\|
^{\f{1}{2(k+1)} }_{L^{6}} \| D ^{k+2}u\|
^{ \f{2k+1}{2(k+1)} }_{L^{2}}\leq  C\| \nabla u\|
^{\f{1}{2(k+1)} }_{L^{2}} \| D ^{k+2}u\|
^{ \f{2k+1}{2(k+1)} }_{L^{2}}.$$
By integrating it with respect to time, we observe that
$$\ba
\int_{0}^{T}\| D ^{k}u\|
^{\f{1}{k+1} }_{L^{\infty}}dt\leq&   C\int_{0}^{T}\| \nabla u\|
^{\f{1}{2(k+1)^{2}} }_{L^{2}} \| D ^{k+2}u\|
^{ \f{2k+1}{2(k+1)^{2}} }_{L^{2}}dt\\
\leq&   C\B(\int_{0}^{T}\| \nabla u\|
^{2 }_{L^{2}}dt\B)^{\f{1}{4(k+1)^{2}}} \B(\int_{0}^{T}\| D ^{k+2}u\|
^{ \f{2 }{2 k+3} }_{L^{2}}dt\B)^{1-\f{1}{4(k+1)^{2}}}.
\ea$$
Hence, we have $ D ^{k}u \in L^{\f{1}{k +1}}(0,T;L^{\infty}(\mathbb{R}^{3})).$
Likewise,  $ D ^{k}B \in L^{\f{1}{k +1}}(0,T;L^{\infty}(\mathbb{R}^{3})).$
\\
(1) According to the  interpolation inequality and Sobolev embedding theorem, we remark that
$$\ba
\| u\|_{L^{q}}\leq \| u\|_{L^{6}}^{\f6q}\| u\|^{1-\f6q}_{L^{\infty}}\leq C\| \nabla u\|_{L^{2}}^{\f6q}\| u\|^{1-\f6q}_{L^{\infty}}, 6\leq q \leq\infty.
\ea$$
Integrating the above inequality with respect to time variable and using the H\"older inequality, we get
 \be\ba
\int_{0}^{T}\| u\|_{L^{q}}^{\f{q}{q-3}}dt \leq C \B(\int_{0}^{T}\|\nabla u\|_{L^{2}}^{2}dt\B)^{\f{3}{q-3}}\B( \int_{0}^{T} \|u\|_{L^{\infty}}dt\B)^{1-\f{3}{q-3}}.
\ea\ee
We derive from \eqref{while} that $  u   \in L^{\f{q}{q-3}}(0,T;L^{q} (\mathbb{R}^{3})),  6\leq q\leq\infty.$ Repeating the same deduction, we find $  B   \in L^{\f{q}{q-3}}(0,T;L^{q} (\mathbb{R}^{3})),  6\leq q\leq\infty.$ \\
(2)
We derive from the interpolation inequality that
$$\ba
\| D ^{k}u\|_{L^{q}}\leq \| D ^{k}u\|_{L^{2}}^{\f2q}\| D ^{k}u\|^{1-\f2q}_{L^{\infty}}, 2\leq q \leq\infty.
\ea$$
Integrating the above inequality in time and using the H\"older inequality, we verify that
\be\ba
\int_{0}^{T}\| D ^{k}u\|_{L^{q}}^{\f{q}{kq+q-3}}dt\leq & \int_{0}^{T}\| D ^{k}u\|_{L^{2}}^{\f{2}{kq+q-3}}\| D ^{k}u\|^{\f{q-2}{kq+q-3}}_{L^{\infty}}dt
\\
\leq& \B(\int_{0}^{T}\| D ^{k}u\|_{L^{2}}^{\f{2}{2k -1}}dt\B)^{\f{2k -1}{kq+q-3}}\B(\int_{0}^{T}\| D ^{k}u\|
^{\f{1}{k +1}}_{L^{\infty}}dt
\B) ^{1-\f{2k -1}{kq+q-3}}.
\ea\ee
Consequently, we find
$ D ^{k}u   \in L^{\f{q}{k(q+1)-3}}(0,T;L^{q} (\mathbb{R}^{3})), 2\leq q\leq\infty.$ Along the same line, we have $ D ^{k}B   \in L^{\f{q}{k(q+1)-3}}(0,T;L^{q} (\mathbb{R}^{3})), 2\leq q\leq\infty.$ The proof of this corollary is completed.
\end{proof}

\end{document}